\documentclass[preprint,sort]{elsarticle}

\usepackage{amsfonts,amsthm,amssymb,amsmath}
\usepackage[latin1]{inputenc}
\usepackage[T1]{fontenc}
\usepackage{ae,aecompl}
\usepackage{graphicx}
\usepackage{enumitem}
\usepackage{nameref,hyperref,cleveref}

\newtheorem{theo}{Theorem}[section]
\crefname{theo}{theorem}{theorems}
\Crefname{theo}{Theorem}{Theorems}
\newtheorem{prop}[theo]{Proposition}
\crefname{prop}{proposition}{propositions}
\Crefname{prop}{Proposition}{Propositions}
\newtheorem{lem}[theo]{Lemma}
\crefname{lem}{lemma}{lemmas}
\Crefname{lem}{Lemma}{Lemmas}
\newtheorem{cor}[theo]{Corollary}
\crefname{cor}{corollary}{corollaries}

\theoremstyle{definition}

\newtheorem{defi}[theo]{Definition}
\crefname{defi}{definition}{definitions}
\Crefname{defi}{Definition}{Definitions}
\newtheorem{con}[theo]{Conjecture}
\crefname{con}{conjecture}{conjectures}
\Crefname{con}{Conjecture}{Conjectures}

\theoremstyle{remark}

\newtheorem*{rem}{Remark}

\crefname{clm}{claim}{claims}
\Crefname{clm}{Claim}{Claims}

\theoremstyle{plain}

\crefformat{equation}{(#2#1#3)}
\Crefformat{equation}{(#2#1#3)}

\newcommand{\set}[1]{\left\lbrace #1 \right\rbrace}
\newcommand{\cont}[1]{\!\left\lbrace #1 \right\rbrace}
\newcommand{\card}[1]{\left\vert #1 \right\vert}

\DeclareMathOperator{\id}{id}


\journal{Journal of Combinatorial Theory, Series B}

\begin{document}

\begin{frontmatter}



\title{On spanning tree packings of highly edge connected graphs}


\author[fwf]{Florian Lehner}
\fntext[fwf]{The author acknowledges the support of the Austrian Science Fund (FWF), project W1230-N13.}

\address{TU Graz, Institut für Geometrie, Kopernikusgasse 24, 8010 Graz, Austria}

\begin{abstract}
We prove a refinement of the tree packing theorem by Tutte/Nash-Williams for finite graphs. This result is used to obtain a similar result for end faithful spanning tree packings in certain infinite graphs and consequently to establish a sufficient Hamiltonicity condition for the line graphs of such graphs.
\end{abstract}

\begin{keyword}
infinite graph theory \sep end faithful spanning tree \sep spanning tree packing \sep Hamiltonian cycle

\MSC[2010] 05C63 \sep 05C05

\end{keyword}

\end{frontmatter}

\section{Introduction}

A spanning tree packing of a graph is a set of edge disjoint spanning trees. The following theorem, discovered independently by Tutte and Nash-Williams, provides a sufficient condition for the existence of a spanning tree packing of cardinality $k$.
\begin{theo}[\citet{0096.38001}, \citet{0102.38805}]
\label{tuttenashwilliams}
Every finite $2k$-edge connected graph has $k$ edge disjoint spanning trees. 
\end{theo}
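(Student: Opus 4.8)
The plan is to deduce the statement from the stronger \emph{partition form} of the Nash-Williams--Tutte theorem, and to show that the deep content of that theorem is exactly what remains once the connectivity hypothesis has been converted into a purely extremal condition on partitions. Call an edge of $G$ a \emph{cross edge} of a partition $\mathcal P=\{V_1,\dots,V_r\}$ of $V(G)$ if its two ends lie in different parts, and write $\|\mathcal P\|$ for the number of cross edges. The general theorem of Nash-Williams and Tutte states that a finite (multi)graph has $k$ edge-disjoint spanning trees if and only if $\|\mathcal P\|\ge k(r-1)$ holds for every partition $\mathcal P$ into $r$ parts. So it suffices to verify this inequality under the assumption that $G$ is $2k$-edge connected.

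That verification is a short double count. Fix a partition into $r\ge 2$ parts; the case $r=1$ is trivial since the right-hand side is then $0$. Each part $V_i$ is a nonempty proper subset of $V(G)$, so the cut $\partial V_i$ of edges with exactly one end in $V_i$ satisfies $|\partial V_i|\ge 2k$ by $2k$-edge-connectivity. Every cross edge lies in exactly two of these cuts, whence
\[
\|\mathcal P\| \;=\; \tfrac12\sum_{i=1}^{r}|\partial V_i| \;\ge\; \tfrac12\cdot 2k\,r \;=\; kr \;\ge\; k(r-1),
\]
which is the required inequality.

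It remains to prove the partition theorem itself, and this is where the real work lies. I would argue by taking a family $F_1,\dots,F_k$ of edge-disjoint forests maximizing the total number of edges $\sum_i|E(F_i)|$, and assume for contradiction that this is less than $k(n-1)$, where $n=|V(G)|$, so that some $F_i$, say $F_1$, is disconnected. The aim is to reach a contradiction by producing either an increase in the total or a partition $\mathcal P$ with $\|\mathcal P\|<k(|\mathcal P|-1)$. Any component-joining edge $e\notin F_1$ that is unused could simply be added to $F_1$, contradicting maximality; hence every such edge lies in some other forest, and one attempts to free it by a fundamental-cycle exchange, repairing the forest it was taken from by the same kind of move. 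The main obstacle is to organise these exchanges so that the process terminates: one tracks the set of edges reachable by such swaps and shows that, if no augmentation is ever possible, the components induced on the explored region constitute a partition violating the inequality.

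Alternatively, one can sidestep the exchange bookkeeping by invoking the matroid union theorem for $k$ copies of the cycle matroid $M(G)$. The maximum number of edges decomposable into $k$ forests equals $\min_{X\subseteq E}\bigl(|E\setminus X|+k\,(n-c(X))\bigr)$, where $c(X)$ is the number of components of the spanning subgraph $(V,X)$. This quantity equals $k(n-1)$ -- and any decomposition attaining it necessarily consists of $k$ spanning trees -- exactly when $|E\setminus X|\ge k(c(X)-1)$ for every $X$. Since the cross edges of the component partition of $(V,X)$ all avoid $X$, this last inequality follows from $2k$-edge-connectivity by the same double count as above applied to that partition, completing the argument.
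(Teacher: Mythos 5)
The paper offers no proof of \Cref{tuttenashwilliams} at all: it is quoted as a classical theorem of Tutte and Nash-Williams and used as a black box throughout, so your argument can only be measured against the literature, not against anything in the paper. Judged on those terms, your reduction is correct and is the standard one: for a partition $\mathcal P$ into $r\ge 2$ parts every part $V_i$ is a nonempty proper vertex set, so $\card{\partial V_i}\ge 2k$ by $2k$-edge-connectivity, every cross edge lies in exactly two such boundaries, and hence $\|\mathcal P\|=\tfrac12\sum_i\card{\partial V_i}\ge kr\ge k(r-1)$, the cases $r=1$ and $\card{V}=1$ being trivial. The substance is then the partition theorem, which \emph{is} the Nash-Williams--Tutte theorem in full strength, and you rightly flag that it still needs proof, so your argument is not circular only to the extent that you actually supply one. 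Of your two routes, the forest-exchange sketch is not yet a proof: organising the fundamental-cycle swaps so that the process terminates, and extracting a violating partition from the explored region when no augmentation exists, is precisely the hard core of the classical argument, and you only describe what that bookkeeping should accomplish. The matroid-union route, however, is complete modulo that (standard, citable) theorem: the minimum of $\card{E\setminus X}+k\,(n-c(X))$ over $X\subseteq E$ is at most $k(n-1)$ (take $X=E$ and use connectedness), your double count applied to the component partition of $(V,X)$ gives $\card{E\setminus X}\ge k(c(X)-1)$ and hence the reverse inequality, and a union of $k$ edge-disjoint forests with $k(n-1)$ edges in total forces each forest to have exactly $n-1$ edges, i.e.\ to be a spanning tree. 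So, granting the matroid union theorem the same ``known result'' status that the paper grants \Cref{tuttenashwilliams} itself, your proposal is a valid proof; the exchange-based variant would need its missing details filled in before it could stand on its own.
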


It is known that this result does not remain true for infinite graphs, not even for locally finite graphs, i.e., infinite graphs where every vertex has only finitely many neighbours. \citet{MR982868} showed that it is possible to construct locally finite graphs of arbitrarily high edge connectivity that do not possess two edge disjoint spanning trees.

Meanwhile the following approach due to \citet{1063.05076,1050.05071} allows a natural extension of the result to locally finite graphs. They proposed to use topological notions of paths and cycles in infinite graphs, which has the advantage of being able to define cycles containing infinitely many edges. More precisely they used homeomorphic images of the unit interval and the unit circle in the Freudenthal compactification of a graph (so called arcs and topological circles) as infinite analogues of paths and cycles in finite graphs. They also introduced the concept of a topological spanning tree, which is an infinite analogue of finite spanning trees compatible with the notions of arcs and topological circles.

Using the topological notions above numerous results from finite graph theory have been generalised  to locally finite graphs
\cite{
brewfunk,
1055.05088,
1077.05081,
pre05523086,
1088.05026,
1136.05030,
1180.05060,
1187.05031,
1067.05037,
1063.05076,
1050.05071,
agelos,
pre05498547,
pre05576421,
pre05686847,
1085.05052}, and even to general topological spaces \cite{graph-continua,Vella:2008fk},  substantiating their impact on infinite graph theory. As mentioned earlier these new concepts can also be used to establish a generalisation of \Cref{tuttenashwilliams}. The following result can be found in \cite{1086.05001}.

\begin{theo}
\label{toptrees}
Every locally finite $2k$-edge connected graph has $k$ edge disjoint topological spanning trees. 
\end{theo}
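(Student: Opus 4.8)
The plan is to reduce the infinite statement to the finite \Cref{tuttenashwilliams} by the standard exhaustion-and-compactness engine, using the announced refinement to control the behaviour at the ends. First I would fix a connected locally finite $2k$-edge connected graph $G$ (connectivity is automatic for $k\geq 1$) together with an exhaustion $G_1\subseteq G_2\subseteq\cdots$ by finite connected subgraphs with $\bigcup_n V(G_n)=V(G)$. For each $n$ I form a finite minor $H_n$ by contracting every component of $G-V(G_n)$ to a single vertex and deleting the loops that arise; the contracted vertices are to be thought of as proxies for the ends. Since contracting a connected vertex set never decreases edge connectivity---any cut of $H_n$ lifts to a cut of $G$ of the same size that leaves each contracted set intact---each $H_n$ is again $2k$-edge connected, so \Cref{tuttenashwilliams} supplies $k$ edge disjoint spanning trees $T_1^n,\dots,T_k^n$ of $H_n$.

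The packings of $H_n$ and $H_{n+1}$ need not agree, so the second step is a compactness argument. Encoding a packing of $H_n$ as a colouring of its finitely many edges by colours $0,1,\dots,k$ (colour $i>0$ marking $T_i^n$, colour $0$ the unused edges), the set of admissible packings of each $H_n$ is finite, and deleting the edges incident with the newly contracted vertices restricts a packing of $H_{n+1}$ to a partial packing of $H_n$. A diagonal argument (equivalently, K\"onig's infinity lemma applied to the tree of mutually compatible finite packings) then yields one packing per level whose colourings agree on every fixed edge of $G$ once $n$ is large enough. The eventual colour of each edge defines $k$ pairwise edge disjoint edge sets $T_1,\dots,T_k\subseteq E(G)$.

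It remains to verify that each $T_i$ is a topological spanning tree of $G$, i.e.\ that its closure $\overline{T_i}$ in the Freudenthal compactification $|G|$ is arc connected, contains every vertex and every end, and contains no topological circle. Spanning of the vertices and the absence of \emph{finite} cycles are inherited directly from the finite trees $T_i^n$. The two genuinely topological requirements---arc connectedness through the ends and the absence of \emph{infinite} topological circles---are the heart of the matter: an edge set can be a bona fide spanning tree in every finite minor and still fail to reach some end, or close up into an infinite circle through an end, in the limit (two rays tending to a common end appear in each $H_n$ only as two paths to the same contracted vertex, but may join into a topological circle in $|G|$).

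This is exactly where the plain finite theorem is too weak and the refinement is needed: I would impose, on each $H_n$, a boundary condition at the contracted vertices---prescribing for each colour $i$ how its tree attaches to each end-proxy---chosen so that in the limit every $T_i$ meets each end in precisely one arc, which simultaneously secures arc connectedness and rules out circles. The bulk of the work, and what I expect to be the main obstacle, lies in formulating this boundary condition correctly and proving that the refined finite tree packing theorem can always realise it while keeping the $k$ trees edge disjoint; once that is in place, the exhaustion and K\"onig machinery of the first two steps is routine.
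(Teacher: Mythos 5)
First, a point of orientation: the paper does not prove \Cref{toptrees} at all; it quotes it from the literature, and its own finite refinement \Cref{treecut2} is developed for the strictly stronger \emph{end faithful} statements \Cref{onetreecountable,twotreescountable}, not for \Cref{toptrees}. So your closing claim that ``this is exactly where the plain finite theorem is too weak and the refinement is needed'' inverts the paper's logic. Independently of attribution, your argument has a concrete gap in the compactness step, and it propagates into step 3. The consistency your diagonal argument delivers---one packing per level whose colourings eventually agree on each fixed edge---is too weak. Take the infinite ladder (vertices $u_i,w_i$, rungs $u_iw_i$, rails; $k=1$), $V_n=\set{u_i,w_i \mid i\le n}$, and in $H_n=G\cont{V_n}$ the spanning tree $S_n$ consisting of the rung $u_0w_0$, all rails below level $n$ on both sides, and the edge $u_nu_{n+1}$. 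Each $S_n$ is a genuine spanning tree of $H_n$ and every edge's colour stabilises, yet the limit edge set (both rails plus the bottom rung) has a closure containing the topological circle that runs up one side, through the end, down the other side and back through the rung. So stabilising colours do not prevent infinite circles. What one actually needs is exact restriction compatibility, $S_{n+1}\vert_{H_n}=S_n$, which is precisely the hypothesis of the paper's \Cref{limtree}; under it the limit is automatically a topological spanning tree, with no boundary conditions at the contracted vertices. The rub is that exact compatibility cannot be extracted by K\"onig's lemma as you describe, because the restriction of a packing of $H_{n+1}$ to $H_n$ need not be a packing at all: a tree meeting a contracted region in several components acquires cycles upon contraction (in the ladder, $S_{n+1}\vert_{H_n}$ is a Hamiltonian cycle). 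Worse, for a badly chosen exhaustion there may be \emph{no} packing of $H_m$ restricting to a packing of $H_n$: if $a$ has four distinct neighbours of which only $b$ lies outside $V_n$, then each tree of a restriction-compatible packing would have to contain the edge $ab$, contradicting edge disjointness. So your K\"onig tree can be empty above some level. Arranging compatibility---choosing the exhaustion along minimal cuts and packing the finite pieces, for which plain \Cref{tuttenashwilliams} applied to the pieces does not suffice---is the real work; it is what the paper does in Section 4 via \Cref{contract2} and \Cref{treecut2}.

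Your diagnosis in step 3 is also off in both directions. A consistent limit cannot ``fail to reach some end'': every end is a limit of vertices and every vertex lies in each $T_i$, so all ends lie in the closure automatically. Conversely, the condition you propose to enforce---that each tree meets each end ``in precisely one arc''---is not a requirement for topological spanning trees at all: a topological spanning tree may approach an end along several disjoint rays, and its intersection with $G$ may even be disconnected (this is exactly why \Cref{toptrees} holds for all locally finite graphs while the end faithful versions \Cref{onetreecountable,twotreescountable} provably need countably many ends). You have thus substituted the harder notion of end faithfulness for the one in the statement, and then deferred the entire difficulty to a ``refined finite tree packing theorem with boundary conditions'' that is never formulated or proved. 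As it stands the proposal is a plan whose routine parts are correct in outline but whose two essential ingredients---a compactness argument that actually yields restriction-compatible packings, and the verification lemma (\Cref{limtree}) that such a limit is a topological spanning tree---are respectively flawed and missing.
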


The starting point of this paper was the following conjecture by Georgakopoulos related to the Hamiltonian problem in infinite graphs. 

\begin{con}[\citet{agelos}]
\label{agelos}
The line graph $L(G)$ of every locally finite $4$-edge connected graph $G$ has a Hamiltonian circle.
\end{con}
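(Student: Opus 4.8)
To establish \Cref{agelos} the plan is to reduce a Hamiltonian circle of $L(G)$ to a spanning substructure of $G$, and then to build that substructure from a spanning tree packing. The classical guide is the theorem of Harary and Nash-Williams: for a finite graph, $L(G)$ is Hamiltonian exactly when $G$ carries a \emph{dominating closed trail}, a closed trail meeting an endpoint of every edge. First I would establish a topological analogue, so that a Hamiltonian circle of $L(G)$ arises from a closed topological trail of $G$ that meets every edge and passes through every end. Since $G$ is locally finite and $4$-edge connected, \Cref{toptrees} applied with $k=2$ supplies two edge-disjoint topological spanning trees $T_1$ and $T_2$, which serve as the raw material for this trail.

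The core construction turns $T_1\cup T_2$ into a connected spanning subgraph $H$ of $G$ that is \emph{even}, in the sense that every finite cut of $H$ has even cardinality; domination is then automatic, as $H$ is spanning. By the Euler-tour theorem in the topological setting, such an $H$ admits a topological Euler tour, a closed curve traversing each edge of $H$ exactly once and visiting every vertex and every end. It remains to incorporate the edges of $G$ outside $H$: each such edge $e=uv$ has both ends on the tour, and I insert it between two tour-edges that are consecutive at $u$ (or at $v$), so that the amended closed curve runs once through every edge of $G$ with consecutive edges sharing a vertex --- precisely a Hamiltonian circle of $L(G)$. Passing from $T_1\cup T_2$ to an even spanning $H$ is a finite-style parity correction, adjusting the odd-degree vertices of the union, and must be arranged so as not to disturb the behaviour at the ends.

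The main obstacle lies entirely at the ends. In the Freudenthal compactification the real danger is that the curve I construct is not a single circle but a disjoint union of arcs and double rays, or that it misses an end of $L(G)$ altogether; a Hamiltonian circle must be a homeomorphic image of $S^1$ meeting every vertex \emph{and} every end. This is exactly what \emph{end faithfulness} controls: an end-faithful spanning tree packing has at each end precisely the number of tree-rays needed for the compactification to glue the Euler tour and its splices into one circle and to reach every end of $L(G)$. Producing such packings is the heart of the matter, and I would obtain them by a compactness argument along an exhaustion of $G$ by finite subgraphs, applying the refinement of \Cref{tuttenashwilliams} on each truncation to control how the two trees cross the finite separators that define the ends. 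Making these finite choices compatible in the limit, so that the rays of the limiting trees converge correctly to the ends, is the step I expect to be hardest; it is precisely the point at which a plain application of \Cref{tuttenashwilliams} fails and the refinement is indispensable.
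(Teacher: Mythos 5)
You are attempting to prove \Cref{agelos}, which is an open conjecture; the paper does not prove it, but only the weaker \Cref{linegraph} ($6$-edge connectivity, at most countably many ends). Your outline is essentially the paper's strategy for that partial result, but it contains a genuine gap exactly where you place the difficulty, and that gap cannot be closed along the route you describe. Your raw material is already wrong: \Cref{toptrees} with $k=2$ gives two edge-disjoint \emph{topological} spanning trees, and the paper states explicitly that this is insufficient. A topological spanning tree may have a disconnected graph-theoretic part, so the parity-correction step has no connected spanning skeleton to start from; the construction of the Eulerian subgraph (\Cref{sumevencuts,treetoeuler}) needs one of the two trees to meet $G$ in an ordinary -- equivalently, end faithful -- spanning tree. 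Moreover, in the infinite setting evenness of vertex degrees is not the right condition: one needs every finite cut of $H$ to be even (\Cref{eulerian}), which the paper arranges via fundamental circles (well-defined only thanks to \Cref{sumwelldefined}), and the resulting Euler tour must additionally be made injective at ends (\Cref{injectiveeuler}) before it can be spliced into a Hamiltonian circle via \Cref{eulertohamilton}; without injectivity at ends the spliced curve is not a homeomorphic image of $S^1$.

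The fatal obstruction is to your final step. The end faithful packings you plan to extract ``by a compactness argument along an exhaustion'' provably do not exist under the hypotheses of the conjecture. Even with countably many ends, \Cref{onetreecountable} yields only $k-1$ end faithful trees from $2k$-edge connectivity, so $4$-edge connectivity gives one end faithful tree, not two; this is precisely why the paper must assume $6$-edge connectivity. Worse, for uncountably many ends the Aharoni--Thomassen construction discussed in the last section of the paper produces, for every $k$, a $k$-edge connected locally finite graph admitting \emph{no} edge-disjoint pair consisting of a connected spanning subgraph of $G$ and an arcwise connected subspace of $\overline G$ containing all vertices. Your plan requires exactly such a pair (an ordinary spanning tree edge-disjoint from a topological spanning tree), so no refinement of \Cref{tuttenashwilliams} and no limiting argument can supply it in general. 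Note that the conjecture itself is not thereby refuted -- the paper shows in \Cref{ahathom_hamiltonian} that the line graph of the Aharoni--Thomassen graph is Hamiltonian -- but the proof there constructs an end faithful spanning Eulerian subgraph directly, without any tree packing. Any proof of the full conjecture must likewise bypass tree packings for graphs with uncountably many ends, which is the idea your proposal is missing.
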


By an infinite Hamiltonian circle we mean a topological circle containing every vertex and every end. In the finite case \Cref{agelos} is known to be true. This was first observed by \citet{MR856118} who stated the fact without proof. A simple proof later given by \citet{MR928734} makes use of \Cref{tuttenashwilliams}. However, it turns out that \Cref{toptrees} is insufficient to provide a similar proof for \Cref{agelos}. Hence we need a generalisation of the theorem of Tutte/Nash-Williams involving a better notion of spanning trees. 

In the present paper we establish a sufficient condition for the existence of large end faithful spanning tree packings similar to \Cref{tuttenashwilliams} where an end faithful spanning tree packing is a spanning tree packing in which every tree is end faithful.

\begin{theo}
\label{onetreecountable}
Let $G$ be a $2k$-edge connected locally finite graph with at most countably many ends. Then $G$ admits an end faithful spanning tree packing of cardinality $k-1$.
\end{theo}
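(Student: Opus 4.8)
The plan is to prove \Cref{onetreecountable} by exhausting $G$ with finite subgraphs, applying a finite refinement of \Cref{tuttenashwilliams} to each piece, and assembling the pieces into $k-1$ end faithful spanning trees by a compactness argument. First I would fix an increasing sequence $G_0 \subseteq G_1 \subseteq \cdots$ of finite connected subgraphs with $\bigcup_n V(G_n) = V(G)$ and $\bigcup_n E(G_n) = E(G)$, and write $S_n = V(G_n)$. Since $G$ is locally finite, each $G - S_n$ has only finitely many components, finitely many of which are infinite. Using that $G$ has only countably many ends, I would further arrange, by diagonalising over the countably many pairs of ends and enlarging each $G_n$ by a finite end separator, that any two distinct ends of $G$ eventually lie in different components of $G - S_n$. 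The observation driving the construction is the following criterion: a spanning tree $T$ of $G$ is end faithful provided that for every $n$ and every infinite component $C$ of $G - S_n$, the subforest $T[C]$ induced on $C$ has exactly one infinite component. Indeed, surjectivity of the natural map from the ends of $T$ to the ends of $G$ holds for every spanning tree of a connected locally finite graph, and this \emph{one infinite component per $C$} condition forces injectivity, because distinct ends of $T$ are then separated, at some finite stage, into infinite $T$-components lying in distinct components of $G - S_n$.

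I would build the trees stage by stage and pass to a limit. Contracting each infinite component of $G - S_n$ to a single vertex yields a finite graph $\hat G_n$ which inherits $2k$-edge connectivity, so \Cref{tuttenashwilliams} already produces $k$ edge disjoint spanning trees. The difficulty is that a naive choice of these trees may enter an infinite component $C$ at several boundary vertices in a way that creates several infinite components of $T_i[C]$, destroying end faithfulness. This is exactly what the refinement of \Cref{tuttenashwilliams} is for: I would establish a finite statement guaranteeing $k-1$ edge disjoint spanning trees in which the prescribed contraction vertices are attached in a controlled manner, so that each such tree meets each part destined for infinity in a single infinite connection while any surplus entries are routed back through $S_n$ as finite detours. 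Applying this refinement to each $\hat G_n$ and recording, for each edge of $G$, which of the $k-1$ trees (if any) uses it, places me in the compact product space $\{0,1,\dots,k-1\}^{E(G)}$; by K\"onig's Lemma I would extract a limit colouring that is a limit of compatible finite packings.

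It then remains to check that the limit objects $T_1, \dots, T_{k-1}$ are what we want. Edge disjointness and acyclicity are closed conditions and pass to the limit; that each $T_i$ is spanning and connected follows because its restriction to every $G_n$ agrees with a spanning tree of the corresponding contracted piece, so every vertex is covered and every pair of vertices is joined by a path that appears at a finite stage. Finally, the controlled attachment maintained at every stage yields the \emph{one infinite component per $C$} invariant for each $T_i$, and hence, by the criterion above together with the end separation built into the exhaustion, each $T_i$ is end faithful.

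The main obstacle, and the technical heart of the argument, is the finite refinement: one must pack $k-1$ edge disjoint spanning trees while simultaneously controlling, for every contracted vertex and every tree, the way that tree reaches towards infinity, and this must be done coherently enough that the invariant survives the transition from $S_n$ to $S_{n+1}$, and thus the limit. I expect this to be where the gap between $2k$ and $2(k-1)$ is spent: the two extra units of edge connectivity should provide the slack needed to reroute the surplus entries into finite detours, keeping a single infinite direction per component in each tree, while preserving a spanning tree packing. This is precisely why I would expect the theorem to yield $k-1$ rather than $k$ end faithful trees, and establishing the refinement, presumably via the Nash-Williams partition machinery applied to the contracted graphs with the prescribed boundary behaviour, is the step I anticipate requiring the most work.
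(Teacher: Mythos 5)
Your high-level plan---a finite refinement of \Cref{tuttenashwilliams}, an exhaustion by contractions, and a limit argument---is indeed the paper's strategy in spirit, but both steps that carry the weight of the proof are missing, and one of them is misdiagnosed. First, the finite refinement is never formulated, let alone proved; you explicitly defer it, and it is not routine: in the paper it is \Cref{treecut2}, proved by a substantial induction using Mader's splitting theorem (\Cref{mader}), together with its consequence \Cref{treecut3}. Note also where $k-1$ actually comes from: $2k$-edge connectivity yields $k$ edge disjoint spanning trees of $G-u$ such that the \emph{union of two of them} contains the prescribed connecting path (\Cref{treecut2}); since each individual limit tree must itself contain such connecting paths, those two trees are merged into a single spanning tree of their union (\Cref{treecut3}), and this merge is the loss from $k$ to $k-1$---it is not ``slack'' from two extra units of edge connectivity spent on rerouting. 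Second, any refinement of this type carries the hypothesis that the cut at the contracted vertex is a cut of \emph{minimal cardinality}, and an arbitrary end-separating exhaustion $(S_n)$ gives no control whatsoever over the boundary cuts of the infinite components of $G-S_n$. This is exactly why the paper cannot reduce directly from countably many ends to finite pieces: it first proves a one-ended version (\Cref{onetree}) in which the exhaustion is built from minimum $W_n$--$\omega$ cuts, and then decontracts one end at a time, using devouring rays (\Cref{devour}) and \emph{compatible} families of minimal cuts (\Cref{compatible}) so that every contracted vertex has a minimal cut as its boundary at every stage of the construction. Nothing in your proposal produces these minimal, mutually compatible cuts, so the refinement---whatever form it takes---could not even be applied to your contracted graphs $\hat G_n$.

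The second gap is in the limit verification. Your claim that each limit tree is connected ``because every pair of vertices is joined by a path that appears at a finite stage'' is precisely the point at which such arguments fail: a path in a spanning tree of $\hat G_n$ may pass through contracted vertices, and compatibility of the packings does not force that connection ever to be realised by an actual path in $G$. The limit of spanning trees of contractions can be disconnected (see the remark following \Cref{limtree}); this is the persistent-gap phenomenon, and it is the whole difficulty of the infinite part of the proof. The paper closes it constructively: a bookkeeping function schedules, at each stage, one potential gap, and \Cref{treecut3} is used to place a genuine connecting path inside a \emph{single} tree of the next packing, so that every gap terminates; \Cref{noinfgaps} then gives end faithfulness. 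Your invariant (one infinite component of $T_i[C]$ for each infinite component $C$ of $G-S_n$) is a sound criterion for end-injectivity, but (i) maintaining it across stages is exactly the unproved refinement, and (ii) even granted the invariant, connectivity of $T_i$ does not follow from it: the invariant only forbids two components of $T_i$ from converging to the same end of $G$, whereas components heading to \emph{different} ends must be excluded by a separate argument (using acyclicity of all restrictions and the structure of the contracted trees), which you do not supply. Until the finite refinement is stated with its minimal-cut hypothesis and proved, and a gap-termination mechanism replaces the ``appears at a finite stage'' claim, the proposal is incomplete at its core.
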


We also show that for a graph with at most countably many ends the topological spanning tree packing in \Cref{toptrees} can be chosen in a way that the union of any two of the topological spanning trees is an end faithful subgraph of $G$.

\begin{theo}
\label{twotreescountable}
Let $G$ be a $2k$-edge connected locally finite graph with at most countably many ends. Then $G$ admits a topological spanning tree packing $\mathcal T$ of cardinality $k$ such that $T_1 \cup T_2$ is an end faithful connected spanning subgraph of $G$ whenever $T_1, T_2 \in \mathcal T$ and $T_1 \neq T_2$.
\end{theo}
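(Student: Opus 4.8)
The plan is to construct a packing $\mathcal T=\{T_1,\dots,T_k\}$ of $k$ edge disjoint topological spanning trees in which $T_1,\dots,T_{k-1}$ are moreover ordinary end faithful spanning trees, while $T_k$ is only required to be a topological spanning tree. This already yields the theorem: each pair $\{T_i,T_j\}$ with $i\neq j$ contains one of $T_1,\dots,T_{k-1}$, say $T_i$, and since $T_i$ is a connected spanning subgraph so is $T_i\cup T_j$; it then suffices to know that enlarging an end faithful spanning tree to a connected spanning subgraph preserves end faithfulness. Thus the argument splits into two structural facts about end faithful spanning trees and the construction of the packing itself.

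The first fact is that an ordinary end faithful spanning tree $T$ is automatically a topological spanning tree, so that $\mathcal T$ is indeed a topological spanning tree packing. Its closure in the Freudenthal compactification $|G|$ is connected and spanning, and it contains no topological circle: such a circle would approach some end $\omega$ along two rays of $T$ converging to the same point of $|G|$, and end faithfulness of $T$ would force these rays into a common end of $T$, where in a tree they share a tail and cannot bound a circle. The second fact is that if $T\subseteq H\subseteq G$ with $T$ end faithful and $H$ a connected spanning subgraph, then $H$ is end faithful. For any connected spanning subgraph $S$ of $H$ the natural map of ends $\partial S\to\partial H$ is surjective, since every infinite component of $H-K$ is a union of entire components of $S-K$, of which there are only finitely many and hence at least one is infinite; applying this with $S=T$, the composite $\partial T\to\partial H\to\partial G$ is the bijection witnessing end faithfulness of $T$, so $\partial T\to\partial H$ is an injection, hence (being surjective) a bijection, which forces $\partial H\to\partial G$ to be a bijection as well.

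What remains, and where the difficulty lies, is the construction of the packing. The naive route is to take $T_1,\dots,T_{k-1}$ from \Cref{onetreecountable} and then to extract $T_k$ from the leftover graph $G_0:=G\setminus(T_1\cup\dots\cup T_{k-1})$, using that every connected locally finite graph has a topological spanning tree. This fails on two counts: $G_0$ need not be connected, since arbitrary end faithful trees may between them saturate a finite cut exactly as $k-1$ unrelated spanning trees can in the finite case; and even if $G_0$ is connected, a topological spanning tree of $G_0$ need not be one of $G$, because distinct ends of $G_0$ may be identified in $G$. Both defects are cured by building all $k$ trees at once rather than sequentially. Fix an exhaustion $G=\bigcup_n G_n$ by finite subgraphs, apply the finite refinement of \Cref{tuttenashwilliams} to each $G_n$ to obtain $k$ edge disjoint trees whose behaviour on the boundary of $G_n$ is controlled, and pass to a limit by compactness, as in the proofs of \Cref{toptrees} and \Cref{onetreecountable}. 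The boundary control has to be strong enough to guarantee simultaneously that all $k$ limit trees are topological spanning trees of $G$ and that the first $k-1$ of them are end faithful; the hypothesis of at most countably many ends enters exactly here, as in \Cref{onetreecountable}, to diagonalise away any merging of ends in the first $k-1$ trees. Arranging these requirements within a single packing is the main obstacle, and once it is met the two facts above finish the proof.
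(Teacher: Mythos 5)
Your reduction is sound as far as it goes: the two structural facts you isolate are precisely \Cref{treetoptree} and \Cref{subtree} of the paper, and if one could construct $k$ edge disjoint trees of which $k-1$ are end faithful ordinary spanning trees and one is a topological spanning tree, the theorem would indeed follow. The genuine gap is that you never construct this packing, and the packing you are aiming for is a \emph{strictly stronger} statement than the theorem itself---one that the paper's finite machinery cannot deliver. In any implementation along the lines of the paper's limit constructions (which you invoke), for the first $k-1$ limit trees to be individually end faithful, every potential disconnection of $T^i$ arising along the exhaustion must eventually be repaired by a path lying in $T^i$ alone; in each finite piece this requires a spanning tree packing of cardinality $k$ in which a \emph{single} tree contains the prescribed $a$-$b$-bypass, i.e.\ $\mathfrak T^{k,1}_{G}(a,b,v) \neq \emptyset$ in the paper's notation. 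The paper proves only two weaker facts: a bypass in the union of \emph{two} trees at full cardinality $k$ (\Cref{treecut2}), or a bypass inside one tree at the cost of dropping to cardinality $k-1$ (\Cref{treecut3}, whose proof merges two trees into one and discards the spare edges). This trade-off is exactly why \Cref{onetreecountable} yields only $k-1$ end faithful trees, and why \Cref{twotreescountable} is phrased in terms of pairwise unions: the paper's proof keeps all $k$ trees but allows each of them individually to be disconnected as a subgraph of $G$, and restores connectivity and end faithfulness only across pairs, by bridging every persistent gap of $T^i$ infinitely often through $T^j$ (\Cref{bridges}, together with a scheduling function $\varphi$ deciding which gap is bridged by which pair at each step). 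Your intermediate statement ($k-1$ individually end faithful trees plus an edge disjoint $k$-th topological spanning tree) sits between the theorem and the open strengthening the paper explicitly flags (that $2k$-edge connectivity might suffice for $k$ end faithful trees); settling it would require new finite input, not merely ``boundary control'' and ``diagonalisation.''

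A second, smaller inaccuracy: for countably many ends the paper does not work with a single exhaustion by finite subgraphs. It first proves one-ended versions (\Cref{onetree,twotrees}) via finite exhaustions and gap-bridging, and then handles countably many ends by a two-level decomposition that adds one end at a time, carving off one-ended pieces by means of compatible systems of minimal cuts (\Cref{compatible}) and devouring rays; countability enters through the enumeration of the ends used to build these contractions, not through a diagonalisation against merging of ends along a finite exhaustion. So even the skeleton of the construction you gesture at differs from the one that is known to work, and the part you defer as ``the main obstacle'' is exactly the content of the paper's proof.
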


Note that the counterexample of \citet{MR982868} mentioned earlier shows that there is a locally finite graph with uncountably many ends which does not permit two edge disjoint connected spanning subgraphs. Hence \Cref{onetreecountable,twotreescountable} are optimal in a sense that they do not hold if we consider graphs with uncountably many ends. Still, it may be possible to improve on the edge connectivity, that is, $2k$-edge connectivity may be sufficient for the existence of an end faithful spanning tree packing of cardinality $k$.

In the proof of the two results we use the following non-trivial refinement of \Cref{tuttenashwilliams} for finite graphs which we prove in \Cref{finite}.

\begin{theo}
\label{treecut2}
Let $G = (V,E)$ be a finite $2k$-edge connected graph and let $u,v \in V$ be such that $E_u := \set{e \in E \mid e \text{ is incident to }u}$ is a $u$-$v$-cut of minimal cardinality. Let $a$ and $b$ be vertices in the same component of $G - \set{u,v}$. Then there is a spanning tree packing $\mathcal T$ of cardinality $k$ of $G - u$ and two trees $T, T' \in \mathcal T$ such that $a$ and $b$ lie in the same component of $(T \cup T') - v$. 
\end{theo}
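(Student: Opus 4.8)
The plan is to split the statement into two tasks: first that $G-u$ carries $k$ edge-disjoint spanning trees at all, and second that the packing can be arranged so that two of its trees connect $a$ and $b$ without using $v$. Writing $d_{G-u}(X)$ for the number of edges of $G-u$ leaving a set $X$ and $e_{G-u}(P)$ for the number crossing a partition $P$, I would attack the first task through the partition criterion of Nash-Williams/Tutte: $G-u$ has $k$ edge-disjoint spanning trees as soon as every partition $P=\{V_1,\dots,V_r\}$ of $V\setminus\{u\}$ has at least $k(r-1)$ crossing edges. Label the parts so that $v\in V_1$ and write $p_i$ for the number of edges joining $u$ to $V_i$, so $\sum_i p_i=\deg(u)$. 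For $i\neq 1$ the $2k$-edge-connectivity of $G$ gives $d_G(V_i)\ge 2k$, hence $d_{G-u}(V_i)\ge 2k-p_i$; for the part $V_1$, which contains $v$ but not $u$, the hypothesis that $E_u$ is a minimum $u$-$v$-cut gives $d_G(V_1)\ge\deg(u)$ and therefore $d_{G-u}(V_1)\ge\deg(u)-p_1=\sum_{i\neq 1}p_i$. Summing the set-degrees over all parts, $2e_{G-u}(P)=\sum_i d_{G-u}(V_i)\ge\sum_{i\neq 1}(2k-p_i)+\sum_{i\neq 1}p_i=2k(r-1)$, which is exactly the required bound. This is the one place where the minimality of $E_u$ does its work.

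For the second task I would first localise the problem. In $G-u$ the vertex $v$ separates the graph into the components $C_1,\dots,C_m$ of $G-\{u,v\}$, and these petals pairwise meet only in $v$; consequently, for $x\in C_j$ the path from $x$ to $v$ in any tree of a packing stays inside $C_j\cup\{v\}$, so the restriction of each tree to $H_j:=G[C_j\cup\{v\}]$ is a spanning tree of $H_j$. In particular every $H_j$ is itself $k$-tree-connected, and conversely $k$-packings chosen independently on the petals glue along $v$ into a $k$-packing of $G-u$. Since $a$ and $b$ lie in one petal $C$, and since in $H:=G[C\cup\{v\}]$ the condition that $a,b$ share a component of $(T\cup T')-v$ is equivalent to their being joined by $T\cup T'$ inside $C=H-v$, the whole statement reduces to this: the $k$-tree-connected graph $H$, whose vertex-deleted graph $H-v$ is connected, admits a $k$-packing two of whose trees connect $a$ and $b$ within $H-v$.

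Inside $H$ it is enough to produce a single tree $T$ of the packing with $a$ and $b$ in one component of $T-v$ (pairing it with any second tree, or taking $T'=T$ when $k=1$); equivalently, $T$ should contain an $a$-$b$ path lying entirely in $C$. Such a tree certainly exists in isolation, because $H-v$ is connected, so the difficulty is purely that of realising it inside a packing. I would fix an $a$-$b$ path $Q\subseteq C$ and start from an arbitrary $k$-packing of $H$, then drive the edges of $Q$ into a single tree by a sequence of edge exchanges, each swap replacing an edge of one tree by an edge of another so that all $k$ members remain spanning trees and stay edge-disjoint. The hard part will be exactly this rerouting: commandeering the edges of $Q$ for one tree while keeping the other $k-1$ trees spanning, i.e. respecting the Nash-Williams constraints throughout. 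This is the genuinely new ingredient, in contrast to the essentially bookkeeping proof of the first task. The most promising route seems to be an extremal formulation: among all $k$-packings of $H$ choose one minimising the number of trees in which $v$ separates $a$ from $b$, and then exhibit an improving exchange whenever that number is positive, forcing it down to zero.
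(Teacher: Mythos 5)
Your first step is correct and pleasantly direct: the partition count $2e_{G-u}(P)\ge\sum_{i\neq 1}(2k-p_i)+\sum_{i\neq 1}p_i=2k(r-1)$ does verify the Tutte/Nash--Williams partition criterion for $G-u$, and the localisation to the petal $H=G[C\cup\set v]$ is also sound (the restriction of a spanning tree of $G-u$ to $H$ is a spanning tree of $H$, and packings of the petals glue back along $v$). But everything after that---which is the actual content of \Cref{treecut2}---is missing, and the strategy you sketch for it cannot be repaired. Note first that your reduction retains only the information that $H$ has $k$ edge disjoint spanning trees and that $H-v$ is connected; the hypotheses that $G$ is $2k$-edge connected and that $E_u$ is a minimum $u$-$v$-cut have been spent entirely on the existence of the packing. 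The paper's proof of \Cref{treecut2:lemma}, by contrast, carries these hypotheses through an induction on $\card V$ (via contractions and Mader's splitting-off theorem, \Cref{mader}), precisely because they are needed again when the bypass is routed. You give no argument that your weaker reduced statement is still true, and it is not obvious.

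Second, and decisively, your proposed endgame aims at a single tree $T$ of a full $k$-packing with $a$ and $b$ in one component of $T-v$, to be reached by minimising the number of trees in which $v$ separates $a$ from $b$ and exhibiting an improving exchange until that number is zero. That target is unattainable in general. Take $k=3$ and let $H$ have vertices $v,a,x,b$ and edges $ax$, $xb$, one edge $vx$, three parallel edges $va$ and three parallel edges $vb$. Then $H$ has three edge disjoint spanning trees, for instance $\set{ax,va_1,vb_1}$, $\set{xb,va_2,vb_2}$, $\set{vx,va_3,vb_3}$, and $H-v$ is the path $a\,x\,b$; but $\deg(x)=3$, so in every $3$-packing each tree contains exactly one of the three edges at $x$, hence no tree ever contains both $ax$ and $xb$. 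Your extremal quantity therefore equals $k$ in every packing and can never be decreased, let alone forced to zero---while the two-tree conclusion of \Cref{treecut2} does hold in this graph, with $ax$ and $xb$ lying in different trees of the packing. This shows that the two-tree formulation is essential (compare \Cref{treecut3}, where one tree suffices only at the price of dropping to a packing of cardinality $k-1$), and that the genuinely hard step---placing the whole bypass into the union of \emph{two} trees of a packing of full cardinality $k$---is exactly the part your proposal leaves open.
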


Using the tree packing results above we establish a sufficient Hamiltonicity condition for line graphs of locally finite graphs. This extends a recent result by \citet{brewfunk} and partially verifies \Cref{agelos}.

\begin{theo}
\label{linegraph}
The line graph of every locally finite $6$-edge connected graph with at most countably many ends has a Hamiltonian circle.
\end{theo}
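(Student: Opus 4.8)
The plan is to follow the classical finite argument of Lai/Catlin for Hamiltonicity of line graphs and lift it to the topological setting using the spanning tree packing machinery developed earlier in the paper. The key structural fact I would invoke is the correspondence between Hamiltonian circles in the line graph $L(G)$ and certain spanning connected subgraphs of $G$: in the finite case a graph $G$ has a connected spanning Eulerian-type subgraph (or more precisely, $L(G)$ is Hamiltonian whenever $G$ has a spanning connected subgraph whose edge set can be partitioned appropriately into a suitable closed trail structure), and in the infinite topological setting the analogous statement — due to the work extending Harary--Nash-Williams to locally finite graphs, and in the form used by Georgakopoulos and by Bruhn--Funk/Brewster--Funk — is that $L(G)$ has a Hamiltonian circle provided $G$ has a spanning subgraph that is connected, spans every vertex and every end, and in which every vertex has even (finite) degree or an analogous topological Eulerian condition holds.

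Concretely, first I would set $2k = 6$, so $k = 3$, and apply \Cref{twotreescountable} to the $6$-edge connected locally finite graph $G$ with at most countably many ends. This yields a topological spanning tree packing $\mathcal T = \set{T_1, T_2, T_3}$ of cardinality $3$ with the crucial property that $T_i \cup T_j$ is an end faithful connected spanning subgraph of $G$ for every pair $i \neq j$. Second, I would use these three edge-disjoint spanning trees to build the desired spanning Eulerian-type subgraph: the union $H = T_1 \cup T_2 \cup T_3$ is connected and spanning, and having three edge-disjoint topological spanning trees gives enough redundancy to adjust the parity of degrees. The standard trick is to work with $T_1 \cup T_2$, which is end faithful and connected by the theorem, and then use the edges of $T_3$ (together with a pairing-up of odd-degree vertices) to correct parities and to guarantee that the resulting subgraph remains connected and end faithful — this is exactly where the end faithfulness of $T_1 \cup T_2$ is essential, since it ensures the topological connectedness needed to close up into a single Hamiltonian circle rather than merely a disjoint union of arcs.

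Third, with the Eulerian-type spanning subgraph in hand I would invoke the infinite analogue of the Harary--Nash-Williams theorem to translate it into a Hamiltonian circle of $L(G)$: a connected, end faithful, (topologically) even spanning subgraph of $G$ gives a closed trail traversing every edge in a controlled way, which in $L(G)$ corresponds to a topological circle passing through every vertex (edge of $G$) and every end. I would cite \Cref{toptrees} and the results of \citet{brewfunk} and \citet{agelos} to supply the precise statement of this translation in the locally finite, at-most-countably-many-ends setting, so that the verification that the constructed object is genuinely a \emph{Hamiltonian} circle — spanning every vertex and every end — reduces to checking end faithfulness, which \Cref{twotreescountable} hands us directly.

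The hard part will be the parity correction step carried out topologically: in finite graphs one simply pairs up odd-degree vertices and adds paths, but for infinite graphs one must ensure that after the parity adjustment the subgraph remains end faithful and that no end is left with an odd or infinite "degree" that would obstruct closing the trail into a single circle. Managing this correction while preserving the end faithful connectedness guaranteed by \Cref{twotreescountable}, and verifying that the resulting topological closed trail really does pass through every end exactly as required for a Hamiltonian circle, is where the genuine difficulty lies; everything else is assembling the finite-case argument around the two tree-packing theorems already proved.
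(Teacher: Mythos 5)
Your three-step scaffolding matches the paper's proof exactly: a tree packing theorem, then an end faithful spanning Eulerian subgraph, then the passage to a Hamiltonian circle of $L(G)$ via a closed dominating trail that is injective at ends (\Cref{eulertohamilton} together with \Cref{injectiveeuler}). The genuine gap is in your second step, precisely the step you defer as ``the hard part.'' In the infinite setting, making all vertex degrees even is \emph{not} the Eulerian condition: by \Cref{eulerian} (Diestel--K\"uhn), a locally finite graph admits a topological Euler tour if and only if every cut is even or infinite. A double ray has every vertex degree even, yet it has finite odd cuts and admits no topological Euler tour. So your proposed correction --- pairing up odd-degree vertices and adding paths through $T_3$ --- does not yield an Eulerian subgraph even if it terminates (and it may not: there can be infinitely many odd-degree vertices). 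One must control the parity of \emph{every finite cut}, a global condition that local vertex surgery does not deliver. The paper's actual mechanism, which your sketch never produces, is a cycle-space construction: $H$ consists of all of one tree $T$ together with those edges of the other tree $T'$ that lie in an odd number of fundamental circles with respect to $T$. This is well defined because each edge lies in only finitely many fundamental circles (\Cref{sumwelldefined}), and since every circle meets every finite cut evenly, so does $H$ (\Cref{sumevencuts}) --- exactly the Diestel--K\"uhn condition.

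A secondary problem is your choice of packing theorem. In the paper, connectivity and end faithfulness of $H$, as well as the fact that $H$ meets every infinite cut infinitely often (so that every cut of $H$ itself is even or infinite), all flow from $H$ containing a tree $T$ such that $T \cap G$ is an \emph{ordinary} spanning tree of $G$; end faithfulness is then \Cref{subtree}, and this is the hypothesis of \Cref{treetoeuler}. With $k=3$, \Cref{onetreecountable} hands you two edge disjoint end faithful ordinary spanning trees, which is exactly what is needed. Your plan built on \Cref{twotreescountable} gives only topological spanning trees, which need not be connected as subgraphs of $G$; so the connectivity and cut properties of your corrected subgraph do not come for free from ``$T_1 \cup T_2$ is connected plus edges of $T_3$,'' and you would still have to manufacture an ordinary end faithful spanning tree inside your packing. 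In short: right external results and right overall shape, but the heart of the proof --- the construction of the end faithful spanning Eulerian subgraph --- is missing, and the finite-graph parity heuristic you propose in its place is invalid for infinite graphs.
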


Finally we outline the example of \citet{MR982868} and show that it is no counterexample to \Cref{agelos}.

\section{Basic Definitions and Facts}
\label{definitions}

All graphs considered in this paper are multigraphs, i.e., we allow multiple edges but no loops. For any notions that are not explicitly defined we will be using the terminology of \citet{1086.05001}. 

For the remainder of this section let $G=(V,E)$ be a connected (multi)graph.

A set $S \subseteq E$ is called an \emph{edge cut} or simply a \emph{cut}, if $G - S$ is not connected. A cut $S$ \emph{separates} two sets of vertices $U$ and $U'$ if there is no $U$-$U'$-path in $G - S$. In this case $S$ is called a \emph{$U$-$U'$-cut}. For convenience we omit the brackets for a $\set u$-$\set {u'}$-cut.

For sets of vertices $U, U' \subseteq V$ define the \emph{local edge connectivity} $\kappa'_G(U,U')$ as the minimal cardinality of a $U$-$U'$-edge cut. We write $\kappa'(U,U')$ instead of $\kappa'_G(U,U')$ if $G$ is clear from the context. The \emph{(global) edge connectivity} of $G$ is defined as $\kappa'(G) = \min_{U,U' \subseteq V} \kappa'_G (U,U').$ Note that $\kappa' (G)$ is the minimal number of edges we have to remove to disconnect the graph. 

The following well known theorem by Menger provides an alternative characterisation of local and thus also global edge connectivity. We will sometimes use this characterisation without explicitly mentioning it.
\begin{theo}[\citet{53.0561.01}]
\label{menger}
Let $G = (V,E)$ be a finite graph and $U, U' \subseteq V$. Then $\kappa' (U,U') \geq k$ if and only if there are $k$ edge disjoint $U$-$U'$-paths.
\end{theo}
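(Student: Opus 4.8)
The plan is to prove the two implications separately, the reverse one being immediate and the forward one requiring an augmenting-path argument that reproves the min-cut/max-flow identity for unit capacities.

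The implication ``$k$ edge-disjoint paths $\Rightarrow \kappa'(U,U')\ge k$'' is the easy half. Let $P_1,\dots,P_k$ be pairwise edge-disjoint $U$-$U'$-paths and let $S$ be any $U$-$U'$-cut. Since $G-S$ contains no $U$-$U'$-path, each $P_i$ must meet $S$ in at least one edge; as the $P_i$ share no edges, this already forces $\card S\ge k$. Minimising over all cuts $S$ gives $\kappa'(U,U')\ge k$.

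For the converse I would build up a maximum family of edge-disjoint paths by augmentation. Start with $\mathcal P=\emptyset$ and suppose inductively that $\mathcal P$ is a family of edge-disjoint $U$-$U'$-paths, each oriented from its endpoint in $U$ towards its endpoint in $U'$; call an edge \emph{used} if it lies on some path of $\mathcal P$. I search for an \emph{augmenting walk}: a walk from $U$ to $U'$ that traverses each unused edge in either direction and each used edge only opposite to its orientation. If such a walk $W$ exists, forming the symmetric difference of $W$ with the edges of $\mathcal P$ (so that used edges traversed backwards are freed and unused edges traversed are added) and decomposing the result into $U$-$U'$-paths produces a family of size $\card{\mathcal P}+1$; here one uses that after cancellation every internal vertex still has equal in- and out-degree. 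Thus as long as an augmenting walk exists, $\mathcal P$ can be enlarged. It remains to see that this can be continued until $\card{\mathcal P}=k$. Suppose no augmenting walk exists and let $R$ be the set of vertices reachable from $U$ along residual walks. Then $U\subseteq R$ while $U'\cap R=\emptyset$, and one checks that every edge of $G$ with exactly one endpoint in $R$ is used and oriented out of $R$; denoting this edge set by $S$, a parity/conservation count shows $\card S=\card{\mathcal P}$, and $S$ is a $U$-$U'$-cut because deleting it leaves no edge joining $R$ to its complement. Hence $\kappa'(U,U')\le\card S=\card{\mathcal P}$, so whenever $\card{\mathcal P}<k$ an augmenting walk must still exist; iterating yields $k$ edge-disjoint $U$-$U'$-paths.

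The main obstacle is this last step: verifying that, at the moment augmentation stalls, the reachable set $R$ delivers a $U$-$U'$-cut of size \emph{exactly} $\card{\mathcal P}$. This is the combinatorial core of the statement, and the bookkeeping of edge orientations and cancellations in an undirected multigraph---ensuring the symmetric difference really decomposes into edge-disjoint paths and that no edge is counted twice---must be carried out with care. As an alternative that sidesteps the direct argument, one could adjoin a super-source and super-sink joined to $U$ and $U'$ respectively by $\card E+1$ parallel edges each (legitimate in the multigraph setting), reducing to the single-pair case so that every minimum $s$-$t$-cut avoids the auxiliary bundles and hence restricts to a $U$-$U'$-cut, and then quote the integral max-flow/min-cut theorem to extract the $k$ paths.
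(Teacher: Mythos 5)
The paper does not prove this statement at all: it is quoted as Menger's classical theorem with a citation to the original 1927 paper and used as a black box, so there is no in-paper argument to compare yours against. On its own merits, your proposal is correct and is the standard Ford--Fulkerson-style proof. The easy direction is fine. In the converse, the crucial point---that when augmentation stalls the reachable set $R$ yields a cut $S$ with $\card S = \card{\mathcal P}$---does go through: every crossing edge must be used (an unused edge could be traversed in either direction, putting both endpoints in $R$) and oriented out of $R$ (a used edge oriented into $R$ could be traversed backwards, again extending $R$); consequently no path of $\mathcal P$ can re-enter $R$, so each path meets $S$ in exactly one edge, and since every edge of $S$ is used and the paths are edge-disjoint this gives a bijection between $\mathcal P$ and $S$. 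Two bookkeeping points you flag deserve explicit handling: take the augmenting walk to be a simple path in the residual digraph (each unused edge contributing two antiparallel arcs, each used edge one reverse arc), which makes the symmetric-difference step clean, and note the degenerate case $U \cap U' \neq \emptyset$, where no $U$-$U'$-cut exists and the statement is vacuous or trivial depending on convention. Your alternative reduction via a super-source and super-sink joined by bundles of $\card E + 1$ parallel edges is also sound in the multigraph setting, since a minimum cut can never profit from breaking into a bundle and hence consists of original edges only.
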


If $U \subseteq V$ then the \emph{contraction} of $U$ in $G$ is defined by $G/U = (V', E')$ where $V'$ is obtained from $V$ by replacing the set $U$ by a new vertex $x_U \notin V$ and $E'$ is obtained from $E$ by replacing all endpoints of edges in $U$ by $x_U$ and deleting all loops. We will also call a graph a contraction of $G$  if it can be obtained from $G$ by a sequence of contractions. If $A,B \subseteq V$ such that at most one of the sets has non-empty intersection with $U$ then clearly an $A$-$B$-cut of minimal cardinality in $G$ will be taken to an $A'$-$B'$-cut of minimal cardinality in $G/U$ where $A'$ and $B'$ are the subsets of $V'$ corresponding to $A$ and $B$ respectively. In particular contractions do not decrease edge connectivity.

The \emph{subgraph induced} by $U$ in $G$ is defined as $G[U] = (U, E \cap U^2)$.

The \emph{minor induced} by $U$ in $G$ is the graph obtained from $G$ by contracting every component of $G - U$ to a vertex and denoted by  $G\cont U$.

Let $H =(V', E')$ be a subgraph of $G$ and let $G'$ be a contraction of $G$. The \emph{restriction} of $H$ to $G'$ is defined as the subgraph of $G'$ that contains exactly the edges corresponding to edges of $H$ and is denoted by $H \vert _{G'}$.

Let $s$ be a vertex of degree $\geq 2$. If $u$ and $v$ are neighbours of $s$, \emph{splitting off} the pair of edges $\set{us,vs}$ means deleting these two edges and replacing them by a new edge $uv$.

The inverse operation of splitting off is called \emph{pinching}, i.e., pinching a set of edges $E' \subseteq E$ at a vertex $w$ means replacing every edge $uv \in E'$  by the two edges $uw$ and $vw$. Note that if $w \notin V$ we need to add $w$ to $V$ first. For convenience we will omit the brackets if $E'=\set e$.

The following theorem by Mader will be an important ingredient to the proofs of our main results. A bridge here means a cut of cardinality one, i.e., an edge whose removal disconnects the graph (note that such an edge cannot exist if $\kappa'(G) > 1$).

\begin{theo}[\citet{0389.05042}]
\label{mader}
Let $G = (V,E)$ be a finite connected graph, $s \in V$ not incident to any bridges in $G$ and $\deg (s) \neq 3$. Then we can find a pair of  edges incident to $s$ that can be split off such that the local edge connectivity remains unchanged for all $x,y \in V \setminus \set{s}$.
\end{theo}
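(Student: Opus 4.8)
The plan is to translate the splitting operation into the language of the cut function, reduce the statement to a covering property of certain ``dangerous'' vertex sets, and then derive a contradiction by uncrossing such sets.

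For $X \subseteq V$ let $d(X)$ denote the number of edges of $G$ with exactly one endpoint in $X$. Counting the contribution of each edge to both sides yields the submodular and posimodular inequalities
\[ d(X) + d(Y) \ge d(X \cap Y) + d(X \cup Y), \qquad d(X) + d(Y) \ge d(X \setminus Y) + d(Y \setminus X). \]
The first thing I would record is the effect of splitting off a pair $\set{us, vs}$: for a set $X$ with $s \notin X$ the value $d(X)$ drops by $2$ if $\set{u,v} \subseteq X$ and is unchanged otherwise, while $\kappa'(x,y)$ can only decrease for $x,y \neq s$ (by Menger, \Cref{menger}, any family of $x$-$y$-paths in the split graph lifts to one in $G$). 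Call a nonempty $X \subseteq V \setminus \set s$ \emph{dangerous} if $d(X) \le \kappa'(x,y) + 1$ for some $x \in X$ and $y \in V \setminus (X \cup \set s)$. The computation above shows that splitting $\set{us,vs}$ preserves all local edge connectivities in $V \setminus \set s$ unless $u$ and $v$ are contained in a common dangerous set. Hence it suffices to find two edges at $s$ whose endpoints lie in no common dangerous set.

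I would argue by contradiction, assuming that every pair of edges incident to $s$ is blocked, i.e.\ for any two of them there is a dangerous set containing both endpoints. The engine of the proof is the claim that, unless $\deg(s) = 3$, this pairwise covering can be merged into a single dangerous set $M$ containing every neighbour of $s$. Granting this claim the proof finishes quickly: since $s \notin M$ and every edge at $s$ enters $M$, we have $d(M) = \deg(s) + d(M \cup \set s)$, and $M \cup \set s$ separates any chosen $x \in M$ from the witness $y \in V \setminus (M \cup \set s)$, so $\kappa'(x,y) \le d(M \cup \set s) = d(M) - \deg(s)$. Combining this with dangerousness, $d(M) \le \kappa'(x,y) + 1 \le d(M) - \deg(s) + 1$, forces $\deg(s) \le 1$. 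But $G$ is connected with $s$ incident to no bridge, so $\deg(s) \ge 2$ — a contradiction. As $\deg(s) \neq 3$ by hypothesis, the claim applies and the theorem follows.

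The main obstacle is precisely this merging claim, and it is where both hypotheses enter. The basic tool is an uncrossing lemma: if $X$ and $Y$ are dangerous and share a neighbour of $s$ in $X \cap Y$, then applying submodularity or posimodularity to the two dangerous inequalities (and bounding $d(X \cap Y)$ and $d(X \setminus Y), d(Y\setminus X)$ from below by the relevant connectivities via \Cref{menger}) shows that either $X \cup Y$ is dangerous, or both $X \setminus Y$ and $Y \setminus X$ are. Only the first branch merges the two sets; I would then run a Helly-type argument over the family of maximal dangerous sets, using the assumed pairwise covering to repeatedly choose the merging branch. The delicate point — and the reason for the exceptional value $\deg(s)=3$ — is that the non-merging branch can persist exactly in a ``triangle'' configuration of three dangerous sets pairwise covering three neighbours of $s$ with no common dangerous superset; such a configuration forces $\deg(s)=3$. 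I expect the care to lie in the parity bookkeeping hidden in the $+1$ of the definition of dangerous (which is what makes the two branches genuinely exclusive) and in verifying that no analogous obstruction survives when $\deg(s) \ge 4$ or $\deg(s)=2$.
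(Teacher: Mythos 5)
First, a point of comparison that is quickly settled: the paper contains no proof of \Cref{mader} at all --- it is imported verbatim from Mader's paper as a known black box. So your attempt has to be judged as a proof of Mader's splitting theorem itself, and the framework you chose is the standard one (essentially Frank's proof of Mader's theorem): the translation of admissibility into the language of dangerous sets is correct --- a pair $\set{us,vs}$ is splittable exactly when no dangerous set contains both $u$ and $v$ --- and your endgame is also correct: if a single dangerous set $M$ contained every neighbour of $s$, then $d(M)=\deg(s)+d(M\cup\set s)\geq \deg(s)+\kappa'(x,y)$ for the witness pair, contradicting $d(M)\leq\kappa'(x,y)+1$ once $\deg(s)\geq 2$, which the no-bridge hypothesis supplies. (Your framework even disposes of $\deg(s)=2$ outright: a dangerous set containing both neighbours of $s$ would contain all of them, which you have just excluded.)

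The genuine gap is the merging claim, which is not a reduction of the theorem but the theorem itself, and the route you indicate toward it would fail as stated. Two concrete problems. First, the uncrossing dichotomy ``either $X\cup Y$ is dangerous, or both $X\setminus Y$ and $Y\setminus X$ are'' is not a correct lemma in this form: dangerousness of $X$ is an inequality against $\kappa'(x,y)$ for a witness pair depending on $X$, and the witnesses for $X$ and for $Y$ are unrelated. After applying submodularity or posimodularity you must still exhibit, for each derived set $Z$, a witness $x\in Z$, $y\notin Z\cup\set s$ with $\kappa'(x,y)$ large enough, and the witnesses you hold may sit on the wrong side (the witness $y$ for $X$ may lie inside $Y$, for instance). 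Making uncrossing usable requires passing to maximal dangerous sets and a case analysis on where the witnesses and the neighbours of $s$ lie; that is where the real work begins. Second, even granting your dichotomy, ``repeatedly choose the merging branch'' is not available: nothing lets you discard the non-merging branch, and your assertion that any persistent obstruction is a triangle of three dangerous sets forcing $\deg(s)=3$ is precisely Mader's theorem restated, not an argument. The actual proofs show that if no admissible pair exists then the neighbours of $s$ are covered by three maximal dangerous sets, and then force $\deg(s)=3$ by counting the edges from $s$ into the pairwise intersections and into the complement of the union --- via lemmas of the form ``if $X\cup Y$ is not dangerous and $X\cap Y$ contains a neighbour of $s$, then $s$ sends exactly one edge into $X\cap Y$.'' None of this counting appears in your sketch, and the parity remark about the $+1$ in the definition of dangerous does not substitute for it. In short: correct reduction, correct final contradiction, but the covering-by-three analysis that constitutes the core of Mader's theorem is missing.
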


A \emph{ray} is a one-sided infinite path, the subrays of a ray is called its \emph{tails}. We call two rays \emph{equivalent} if there is no finite cut $S$ such that tails of the two rays lie in different components of $G-S$. The equivalence classes with respect to this relation are called the \emph{ends} of $G$, the set of ends is denoted by $\Omega(G)$. An \emph{$\omega$-ray} is a ray contained in the equivalence class $\omega \in \Omega$. While technically rays are paths, the word path will mean a finite path unless explicitly stated otherwise.

Endow $G$ with the topology of a 1-complex, i.e., every edge is homeomorphic to the real unit interval with any two of them being internally disjoint. Edges that share a vertex are glued together at one endpoint. We can extend this topology to $G \cup \Omega(G)$ by defining basic open neighbourhoods of $\omega \in \Omega$ in the following way:  let $S$ be a finite cut and denote by $C_S$ the component of $G - S$ in which all $\omega$-rays lie. Let $\Omega_S$ be the set of ends containing a ray in $C_S$ and denote by $E_S$ a set containing a half open edge of every edge connecting $C_S$ to $G-C_S$. Then $C_S \cup \Omega_S \cup E_S$ is a basic open neighbourhood of $\omega$. The arising topological space is called the \emph{Freudenthal compactification} or \emph{end compactification} of $G$ and denoted by $\overline G$. The set $\Omega$ of ends of $G$ with the induced topology is called the \emph{end space} of $G$.

There are some ambiguous notions appearing in both topology and graph theory, like connectedness, which may not coincide for a subgraph $H$ of $G$ and its closure in $\overline G$. Throughout this paper these notions will refer to graph theoretical concepts. If we use the topological notions we will state it explicitly.

An \emph{infinite star} is the complete bipartite graph $K_{1,\vert \mathbb N \vert}$. A \emph{subdivision} of a graph is obtained from this graph by replacing every edge by a finite path. An \emph{infinite comb} is a graph consisting of a ray $\gamma$ and infinitely many disjoint paths having exactly their first vertex on $\gamma$. The other endpoints of the paths are called the \emph{teeth} of the comb, $\gamma$ is called its \emph{spine}. Note that a tooth may lie on the spine if the respective path has length zero. The following lemma is a standard result in infinite graph theory, for a proof see \citet{1086.05001}.

\begin{lem}[Star-Comb-Lemma]
\label{starcomb}
Let $U \subseteq V$ be an infinite set of vertices of $G$. Then $G$ contains either a subdivision of an infinite star with all leaves in $U$ or an infinite comb with all teeth in $U$.
\end{lem}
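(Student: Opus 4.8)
The plan is to reduce to the case of a tree and then argue on a rooted tree via a dichotomy that produces either the star or the comb, with the only delicate point being to prevent all of $U$ from ``escaping down the spine''.

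First I would pass to a spanning tree. Since $G$ is connected it has a spanning tree $T$, and any subdivision of an infinite star with leaves in $U$, or any comb with teeth in $U$, that is found inside $T$ is automatically present in $G$; so it suffices to prove the statement with $G$ replaced by the tree $T$. Root $T$ at an arbitrary vertex $r$ and, for a vertex $t$, write $T_t$ for the subtree of descendants of $t$ (those $v$ whose unique $r$-$v$ path passes through $t$). Let $D$ be the set of vertices $t$ with $T_t \cap U$ infinite; note $r \in D$ since $T_r \cap U = U$.

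The key dichotomy is whether some vertex of $D$ has infinitely many children whose subtrees meet $U$. If some $t \in D$ does, then choosing one vertex $u_c \in T_c \cap U$ in each such child subtree $T_c$ and taking the $t$-$u_c$ paths yields internally disjoint paths (they leave $t$ through distinct children), hence a subdivision of an infinite star centred at $t$ with all leaves in $U$. Otherwise every $t \in D$ has only finitely many children meeting $U$; since $T_t \cap U$ is infinite and contained in $\set{t}$ together with these finitely many subtrees, at least one child $c$ has $T_c \cap U$ infinite, i.e.\ $c \in D$. Starting from $r$ and repeatedly choosing such a child, I obtain a ray $R = t_0 t_1 t_2 \cdots$ with $t_0 = r$ and every $t_i \in D$.

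It remains to grow a comb with spine $R$ and teeth in $U$. For each $i$ let $B_i$ be the union of $T_c$ over the children $c \neq t_{i+1}$ of $t_i$; these branch subtrees, together with the spine vertices, are pairwise disjoint, and a $t_i$-$u$ path into $B_i$ (or the trivial length-zero path if $t_i \in U$) meets $R$ only in $t_i$. If for infinitely many $i$ either $t_i \in U$ or $B_i$ meets $U$, selecting one such tooth for each of these $i$ produces the required comb. The main point to rule out is the opposite situation, where for all large $i$ we have $t_i \notin U$ and $B_i \cap U = \emptyset$: then $T_{t_i} \cap U = T_{t_{i+1}} \cap U$ would be one fixed infinite set $S$ for all large $i$, forcing every $u \in S$ to be a descendant of every $t_i$; but the $r$-$u$ path is finite and can contain only finitely many $t_i$, a contradiction. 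Hence the comb case always delivers infinitely many teeth, and the expected obstacle cannot occur.
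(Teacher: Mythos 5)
Your proof is correct. There is nothing in the paper to compare it against directly: the paper states \Cref{starcomb} as a standard result and refers the reader to Diestel's book for a proof, so any self-contained argument is ``new'' relative to the paper itself. Your route---pass to a rooted spanning tree, then apply the dichotomy ``some vertex has infinitely many children whose subtrees meet $U$'' (yielding the subdivided star, with internally disjoint paths because they enter distinct child subtrees) versus ``always descend into a child whose subtree contains infinitely many vertices of $U$'' (yielding a ray, then a comb)---is essentially the standard textbook argument. The delicate points are all handled: the tooth paths are pairwise disjoint and meet the spine only in their first vertex because the branch sets $B_i$ are pairwise disjoint from each other and from the spine, and the degenerate situation in which only finitely many $i$ produce teeth is correctly excluded, since it would force a fixed infinite subset of $U$ to consist of common descendants of every $t_i$, contradicting the finiteness of each $r$-$u$ path.
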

Clearly in a locally finite graph always the latter is the case. Another easily observed fact is that the sequence of the teeth of a comb converges to the end in which its spine lies. 

Let $H$ be a subgraph of $G$. Then there is a unique continuous function $\varphi \colon \overline H \to \overline G$ such that $\varphi \mid _H = \id$. We say that $H$ is \emph{end faithful} if $\varphi \mid_{\Omega(H)}$ is a homeomorphism of the end spaces of $H$ and $G$. Equivalently, $H$ is an end faithful subgraph if it contains an $\omega$-ray for every end $\omega$ of $G$ and any two rays in $H$ which lie in the same end of $G$ also lie in the same end of $H$. Note that a spanning tree $T$ of $G$ is end faithful if and only if for every end $\omega$ of $G$ any two $\omega$-rays in $T$ have a common tail.

A \emph{topological path} is a continuous (but not necessarily injective) map from the closed unit interval $[0,1]$ to $\overline G$. The image of an injective topological path is called an \emph{arc}.

We call a homeomorphic image of the half open unit interval $[0,1)$ in $\overline G$ a \emph{topological ray}. Analogous to ordinary rays we call the image of $[a,1)$ for $a \in [0,1)$ a \emph{topological tail} of that ray. We say that a topological ray $\gamma$ \emph{converges to $x \in \overline G$} if defining $\gamma(1) =x$ would make it a topological path. Notice that all of its topological tails converge to $x$ as well. Usually $x$ will be an end of $G$.

A homeomorphic image of the unit circle $C^1$ in $\overline G$ is called a \emph{topological circle}.

A topologically path-connected subspace of $\overline G$ that does not contain a topological circle is called a \emph{topological tree}. A \emph{topological spanning tree} of $G$ is a topological tree that contains all vertices and all ends of $G$ and every edge of which it contains an inner point.

A \emph{(topological) spanning tree packing} of $G$ is a set of pairwise edge disjoint (topological) spanning trees of $G$.

For the sake of simplicity we will not distinguish between a subgraph of $G$ and its closure in $\overline G$, for example, we will call a subgraph $T$ of $G$ a topological spanning tree of $G$ if its closure is a topological spanning tree in $\overline G$.

\section{Spanning Trees and Topological Spanning Trees}
\label{trees}

The following section contains a collection of facts about spanning trees and topological spanning trees that we will use throughout this paper.

\begin{prop}
\label{contract2}
Let $G = (V,E)$ be a locally finite graph, $U \subseteq V$ and let $\set{U_i \mid i \in I}$ be a partitioning of $V \setminus U$ such that $G[U_i]$ is connected and there are only finitely many edges connecting $U_i$ to $V\setminus U_i$ for every $i$. Denote by $G_U$ the graph obtained from $G$ by contracting every $U_i$. Given spanning trees $T_U$ of $G_U$ and $T_i$ of $G[U_i]$ there is a spanning tree $T$ of $G$ satisfying $T \vert _{G_U} = T_U$ and $T[U_i] = T_i$.

The statement still holds if we replace spanning trees by topological spanning trees.
\end{prop}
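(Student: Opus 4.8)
The plan is to take the obvious candidate subgraph and verify the tree axioms combinatorially, then bootstrap to the topological setting by projecting along the contraction. Let me write $\pi\colon \overline G \to \overline{G_U}$ for the continuous map induced by contracting each $U_i$, and identify every edge of $G_U$ with the corresponding edge of $G$. I would set $T := T_U \cup \bigcup_{i} T_i$, reading the edges of $T_U$ back as edges of $G$ via this identification. The two required identities $T\vert_{G_U} = T_U$ and $T[U_i] = T_i$ then hold by construction, since the edges of $T$ lying inside a single $U_i$ are exactly those of $T_i$ while all other edges of $T$ are those of $T_U$; and $T$ is spanning because $T_U$ meets every vertex of $U$ and every contracted vertex $x_i$, while each $T_i$ spans $U_i$.

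For the graph-theoretic statement I would first check connectedness: given $a,b \in V$, a finite $\pi(a)$-$\pi(b)$-path in $T_U$ lifts to a walk in $T$ by replacing each passage through a contracted vertex $x_i$ (a pair of consecutive edges meeting at $x_i$) with the unique $T_i$-path joining the two endpoints that these edges have inside $U_i$, and by prefixing/suffixing $T_i$-paths when $a$ or $b$ themselves lie in some part; since $T_i$ is connected this is always possible. For acyclicity I would show that every edge $e$ of $T_U$ is a bridge of $T$: deleting $e$ splits the tree $T_U$ into parts $A,B$, and the only edge of $T$ joining $\pi^{-1}(A)$ to $\pi^{-1}(B)$ is $e$, because every $T_i$-edge stays inside a single part (hence on one side) and every other $T_U$-edge has both ends on one side. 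As a bridge cannot lie on a cycle, any cycle of $T$ must avoid all $T_U$-edges and therefore live inside a single $T_i$ (the parts $U_i$ and the vertices of $U$ attach to the rest of $T$ only through $T_U$-edges), which is impossible. Hence $T$ is a spanning tree.

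For the topological statement the same $T$ is the candidate, and I would use the finitely many edges leaving each $U_i$ to organise the ends. As each such edge set is a finite cut, for every end $\omega$ of $G$ either all $\omega$-rays eventually lie inside a fixed $U_i$ (an \emph{internal} end, corresponding canonically to an end of $G[U_i]$) or they eventually avoid every $U_i$ (an \emph{external} end, corresponding to an end of $G_U$). Internal ends lie in $\overline{T_i} \subseteq \overline T$ because $T_i$ is a topological spanning tree of $G[U_i]$; external ends are reached by lifting a ray of $T_U$ converging to the corresponding end of $G_U$ exactly as in the connectedness argument, the lift meeting each part at most once and so traversing only finitely many edges there, so that it is a genuine ray converging to $\omega$. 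Thus $\overline T$ contains all vertices and all ends, and the analogous lifting of arcs shows $\overline T$ is arc-connected.

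The heart of the matter, and the step I expect to be the main obstacle, is ruling out topological circles. I would argue by projection: since $\pi$ maps each $T_U$-edge to a $T_U$-edge, each $T_i$-edge and each internal end to the vertex $x_i$, and each external end to an end of $G_U$, we have $\pi(\overline T) \subseteq \overline{T_U}$. If $D \subseteq \overline T$ were a topological circle, then $\pi(D)$ would be a loop in the topological tree $\overline{T_U}$. Because a topological tree is uniquely arc-connected, such a loop must traverse each of its edges an even number of times; but $D$, being a circle, contains each edge at most once, so $\pi(D)$ crosses each $T_U$-edge at most once. Hence $D$ contains no $T_U$-edge, and consequently (the parts $U_i$ and the vertices of $U$ being joined to the rest of $T$ only by $T_U$-edges) $D$ is confined to a single $\overline{T_i}$, contradicting that $T_i$ is circle-free. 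The delicate points I anticipate are making the even-crossing claim precise for loops through ends and pinning down the end correspondence under $\pi$ carefully enough that the lifted rays and arcs converge to the intended ends; this is exactly where local finiteness and the finiteness of the cuts separating the $U_i$ must be invoked.
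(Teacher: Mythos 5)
The paper offers no proof of this proposition to compare against (it is explicitly ``left to the reader''), so your proposal must stand on its own. Your graph-theoretic half does: the construction $T = T_U \cup \bigcup_i T_i$, the walk-lifting argument for connectedness, and the observation that every $T_U$-edge is a bridge of $T$ constitute a complete and correct proof of the first statement. In the topological half, however, there are two genuine gaps. First, the claim that external ends are reached by ``lifting a ray of $T_U$ converging to the corresponding end of $G_U$'' so that the lift ``is a genuine ray'' fails as stated: a topological spanning tree of $G_U$ need not contain any graph-theoretic ray converging to a given end (its underlying subgraph may even be disconnected --- this is precisely the gap between topological and end faithful spanning trees that the paper is about), and when some $U_i$ is infinite the connector inserted at $x_i$ is an arc in the closure of $T_i$ that may itself pass through ends of $G[U_i]$, so it is not a finite path. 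Fortunately the conclusion this step aims at is trivial for a different reason: $T$ is spanning, so every basic neighbourhood of every end of $G$ contains vertices of $T$, and hence the closure of $T$ contains all ends automatically. The non-trivial content is arc-connectedness, which you defer to ``the analogous lifting of arcs''; there the real work --- inserting possibly infinite arcs at countably many parameters, and verifying that the reparametrised map is injective and continuous at parameters mapping to ends --- is exactly what is missing.

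Second, in the circle argument the step ``hence $D$ contains no $T_U$-edge, and consequently $D$ is confined to a single $\overline{T_i}$'' is justified only by the parenthetical graph-theoretic intuition, which does not transfer to $\overline G$: a circle avoiding all $T_U$-edges could a priori still pass from one part to another through ends, which are genuine connectivity points of $\overline G$. The repair needs the finite-boundary hypothesis in topological form: the frontier in $\overline G$ of $X_i := \overline{G[U_i]}$ consists exactly of the finitely many endvertices in $U_i$ of boundary edges, and at such a vertex every edge of $T$ leaving $X_i$ is a $T_U$-edge, while a vertex of $U$ on $D$ would be isolated in $D$ (impossible for a circle); it follows that $D \cap X_i$ is clopen in $D$, so connectedness confines $D$ to one $X_i$, where $\overline T \cap X_i = \overline{T_i}$ and the embedding $\overline{G[U_i]} \to \overline G$ (a homeomorphism onto $X_i$, again by the finite boundary) turns $D$ into a circle in a topological tree. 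Your ``even number of times'' claim should likewise be replaced by its usable form: if $\pi(D)$ traversed an edge $e$ of $T_U$ exactly once, then deleting the interior of $e$ from $D$ leaves a topological path between the endvertices of $e$ whose image, by the lemma of Georgakopoulos that the paper invokes for \Cref{limtree}, contains an arc between them, and this arc together with $e$ is a circle in $\overline{T_U}$. Both repairs are available, so your architecture survives; but note that the connectedness half shortens considerably via the standard cut criterion (the closure of a spanning subgraph of a locally finite graph is topologically connected if and only if the subgraph meets every finite cut, which your $T$ visibly does, since a finite cut either splits some connected $G[U_i]$ and is then met by $T_i$, or consists of $G_U$-edges and is then met by $T_U$).
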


The proof is easy and straightforward and will be left to the reader. Note that since $G$ is locally finite $I$ is at most countable. If we start with sets of $k$ edge disjoint spanning trees of each of the $G[U_i]$ and of $G_U$ then by \Cref{contract2} we can find $k$ edge disjoint spanning trees of $G$.

\begin{defi}
Let $G = (V,E)$ be a graph. A non-decreasing sequence $(V_n)_{n \in \mathbb N}$ of subsets of $V$ is called \emph{exhausting} if $\lim_{n \to \infty} V_n = V$.
\end{defi}

\begin{lem}
\label{limtree}
Let $G = (V,E)$ be a locally finite graph and let $V_n$ be an exhausting sequence of subsets of $V$. For every $n \in \mathbb N$ let $\set{U_i^n \mid i \in I}$ be a partitioning of $V \setminus V_n$ such that for every $i$ the graph $G[U_i^n]$ is connected and there are only finitely many edges connecting $U_i$ to $V\setminus U_i$. Let $G_n$ be the graph obtained from $G$ by contracting every $U_i^n$ and assume that for every $U_i^n$ there is a $U_j^{n-1}$ such that $U_i^n \subseteq U_j^{n-1}$, i.e., $G_{n-1}$ can also be seen as a contraction of $G_{n}$. If $T_n$ is a topological spanning tree of $G_n$ and $T_{n+1}\vert_{G_{n}}= T_{n}$ for every $n \in \mathbb N$ then $T := \lim_{n \rightarrow \infty} T_n \cup \Omega(G)$ is a topological spanning tree of $G$.
\end{lem}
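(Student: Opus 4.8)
The plan is to show that $\overline T$, the closure in $\overline G$ of the spanning subgraph $\bigl(V,E(T)\bigr)$ together with $\Omega(G)$, is topologically path-connected and contains no topological circle. Combined with the easy observations that it meets every edge it contains only as a whole edge and that it contains every vertex and every end, this is precisely the definition of a topological spanning tree. First I would make the limit precise. Each $G_n$ is connected, and since every $U_i^n$ has finite boundary and $G$ is locally finite, $G_n$ is locally finite as well, so $\overline{G_n}$ and the notion of a topological spanning tree of $G_n$ make sense, and contracting connected sets of finite boundary induces a continuous surjection $\pi_n\colon\overline G\to\overline{G_n}$. Call an edge \emph{genuine in $G_n$} if its endpoints lie in no common $U_i^n$. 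Because the partitions are nested, genuineness is monotone in $n$, and transitivity of restriction together with the hypothesis $T_{n+1}\vert_{G_n}=T_n$ gives $T_m\vert_{G_n}=T_n$ for all $m\ge n$. Hence, for each edge $e$, the statement ``$e\in T_n$'' is eventually constant, so $E(T):=\{\,e\mid e\in T_n\text{ for all large }n\,\}$ and $T:=\bigl(V,E(T)\bigr)\cup\Omega(G)$ are well defined; moreover $E(T_n)$ is exactly the set of genuine edges of $G_n$ lying in $E(T)$, so $\pi_n(\overline T)=T_n$. Since $T$ contains every vertex, its closure contains every end, and $\overline T$ meets no edge outside $E(T)$ in an inner point. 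It therefore remains to prove circle-freeness and topological path-connectedness.

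For circle-freeness, suppose $C\subseteq\overline T$ is a topological circle. The end space of a locally finite graph is totally disconnected, so $C\not\subseteq\Omega(G)$ and hence $C$ meets the interior of some edge $e=xy$; as an inner point of $e$ is no limit of $\overline G\setminus e$ while $C$ has no endpoint, $C$ runs along $e$ until reaching $x$ and $y$, so $C\supseteq e$. Then $A:=C\setminus\mathring{e}$ is an arc from $x$ to $y$ in $\overline T$ that avoids $\mathring{e}$. Choosing $n$ large enough that $x,y\in V_n$, the edge $e$ is genuine in $G_n$ and lies in $T_n$, while $\pi_n(A)\subseteq T_n$ is a topological path from $x$ to $y$ still avoiding $\mathring{e}$. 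Since every topological path in a Hausdorff space contains an arc between its endpoints, $\pi_n(A)$ contains an arc $A'$ from $x$ to $y$ with $A'\cap e=\{x,y\}$, so $A'\cup e$ is a topological circle in $T_n$, contradicting that $T_n$ is a topological tree.

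For connectivity, suppose $\overline T=X_1\sqcup X_2$ with $X_1,X_2$ nonempty, closed and disjoint. Each edge of $T$ is connected and hence lies in a single $X_i$, which partitions $V$ into $V_1,V_2$ with no edge of $T$ between them; both parts are nonempty because every nonempty open subset of $\overline T$ contains a vertex. If infinitely many edges of $G$ ran between $V_1$ and $V_2$, then, local finiteness giving infinitely many distinct endpoints in $V_1$, \Cref{starcomb} would yield a comb with teeth in $V_1$ whose spine converges to an end $\omega$; the partners of the teeth lie in $V_2$ and converge to $\omega$ as well, so $\omega$ would lie in the closures of both $V_1$ and $V_2$, hence in both $X_1$ and $X_2$, a contradiction. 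Thus the cut $F$ of all edges between $V_1$ and $V_2$ is finite and disjoint from $E(T)$. Fixing $a\in V_1$, $b\in V_2$ and choosing $n$ so large that $V_n$ contains $a$, $b$ and all endpoints of edges of $F$, no $U_i^n$ meets both sides of $F$, so $F$ descends to a finite cut of $G_n$ separating $a$ from $b$; as $F\cap E(T_n)=\emptyset$, the tree $T_n$ has no $a$--$b$-path, contradicting its connectivity. Hence $\overline T$ is connected, and being a closed, connected union of vertices, edges and ends it is topologically path-connected. This establishes that $T$ is a topological spanning tree of $G$.

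The hard part will be the connectivity step: a separation of the limit object need not respect any single contraction $G_n$, so one cannot simply project it down. The key maneuver is to first distil from the separation a \emph{finite} cut of $G$—using local finiteness and \Cref{starcomb} to rule out an infinite cut via a common limit end—and only afterwards push this finite cut into a contraction $G_n$ with $n$ large enough that it is no longer buried inside a contracted piece. The two topological inputs that carry the remaining weight, namely that a path contains an arc between its endpoints and that closed connected standard subspaces of $\overline G$ are arc-connected, are standard facts in the theory underlying the cited reference.
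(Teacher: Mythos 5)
Your proof is correct, and while your treatment of the limit and of circle-freeness follows the paper's line (the paper dismisses circles in one sentence---``the restriction of $C$ to $T_n$ contains a topological circle''---which your argument via the projection $\pi_n$ and the fact that a topological path contains an arc between its endpoints merely makes precise), your connectivity argument takes a genuinely different route. The paper proves topological path-connectedness \emph{directly}: it chooses topological $u$-$v$-paths $\tau_n$ whose intersection with $G[V_n]$ lies in $T_n$ and invokes Georgakopoulos' compactness lemma (every sequence of topological $u$-$v$-paths has a subsequence whose $\liminf$ contains a topological $u$-$v$-path), with a separate remark to cover the case where $u$ or $v$ is an end. You instead prove mere topological connectedness by contradiction---distilling from a putative separation a \emph{finite} cut $F$ of $G$ avoiding $E(T)$ (via local finiteness and \Cref{starcomb}), pushing $F$ down to a finite cut of some $G_n$ avoiding $E(T_n)$, and contradicting the connectedness of $\overline{T_n}$---and then upgrade connectedness to arc-connectedness by the Diestel--K\"uhn theorem that closed connected standard subspaces of $\overline{G}$ are arc-connected. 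What each buys: the paper's route is constructive and needs only the one $\liminf$ lemma, but it silently requires lifting paths of $T_n \subseteq \overline{G_n}$ back to $\overline{G}$ and a second pass for ends; your route handles vertices and ends uniformly and concentrates all the hard topology in one standard black box, at the price of invoking that (nontrivial) theorem plus, implicitly, the jumping-arc lemma. On that last point, one phrasing should be repaired: a topological spanning tree $T_n$ need not be connected \emph{as a graph}, so ``$T_n$ has no $a$-$b$-path, contradicting its connectivity'' should be stated topologically---any arc in $\overline{G_n}$ between the two sides of the finite cut contains an edge of that cut, while $\overline{T_n}$ is a standard subspace containing no inner point of a cut edge, contradicting that $\overline{T_n}$ is arc-connected and contains both $a$ and $b$. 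Finally, note that both you and the paper use the continuous projection $\overline{G} \to \overline{G_n}$ on ends without proof; for contractions of connected parts with finite boundaries this is standard, but it is the one ingredient neither write-up fully justifies.
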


\begin{proof}
The limit exists because the edge sets of $T_n$ form a non-decreasing sequence of subsets of $E$ and $V_n$ is exhausting.

We claim that there is a topological $u$-$v$-path $P \subseteq T$ for every pair $u,v \in V$. \citet{pre05576421} showed that, given a sequence $\tau _n$ of topological $u$-$v$-paths there is a subsequence $\tau_{n_i}$such that $\liminf \tau_{n_i}$ contains a topological $u$-$v$-path. If we choose each $\tau_n$ in a way that its intersection with $G[V_n]$ is contained in $T_n$ then $\liminf \tau_{n_i}$ is contained in $T$.

If one or both of $u$ and $v$ are ends a similar argument works. Just observe that Georgakopoulos' result also holds for ends. If $G_n$ contains a topological ray to an end then so does $T_n$ and thus we can choose the topological paths $\tau_n$ as required.

Finally assume that $T$ contains a topological circle $C$. Since a circle cannot consist entirely of ends $C$ has to contain at least one vertex, say $v$. Then $v$ is included in every $V_n$ from some index $n_0$ on. The restriction of $C$ to $T_n$ contains a topological circle for every $n \geq n_0$, a contradiction to $T_n$ being a tree.
\end{proof}

Note that unlike \Cref{contract2} the above result does not remain true if we substitute spanning trees for topological spanning trees. The reason for this is that the limit of a sequence finite paths need not necessarily be a finite path. In fact it may happen that such a limit is not even connected.

The last result in this section has already been mentioned in the introduction. It provides a characterisation of the end faithful spanning trees of a graph.

\begin{theo}[\citet{1050.05071}]
\label{treetoptree}
If $G$ is locally finite, then a spanning tree of $G$ is end faithful if and only if its closure in $\overline G$ is a topological spanning tree of $G$.
\end{theo}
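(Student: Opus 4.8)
The plan is to first reduce the theorem to a single condition about topological circles. I would begin by observing that for the graph-theoretic spanning tree $T$ the closure satisfies $\overline T = T \cup \Omega(G)$: every end of $G$ lies in $\overline T$ because each basic neighbourhood of an end contains a vertex, all of which belong to $T$ since $T$ spans $G$, while no inner point of an edge outside $T$ is a limit of $T$. Consequently $\overline T$ automatically contains all vertices, all ends, and exactly those edges it meets in an inner point. Moreover $\overline T$ is always topologically path-connected: any vertex or inner edge point is joined to a fixed root $r$ by the finite tree-path in $T$, and any end $\omega$ is joined to $r$ by a tail of an $\omega$-ray of $T$, which exists because $T$ is a connected spanning subgraph of the locally finite graph $G$. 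Hence $\overline T$ is a topological spanning tree if and only if it contains no topological circle, and the whole statement reduces to showing that \emph{$T$ is end faithful if and only if $\overline T$ is circle-free.}

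For the direction ``end faithful $\Rightarrow$ circle-free'' I would argue by contradiction. Suppose $C \subseteq \overline T$ is a topological circle. Since the end space $\Omega(G)$ is totally disconnected it contains no circle, so $C$ meets an inner point of some edge; and since $\overline G$ is compact while $G$ is locally finite, a circle meeting infinitely many edges must accumulate at an end. Thus either $C \cap \Omega(G) = \emptyset$, in which case $C$ is an ordinary finite cycle inside the tree $T$, which is impossible, or $C$ contains some end $\omega$. In the latter case removing $\omega$ from $C$ leaves an arc, so $\omega$ is approached within $C$ by two disjoint topological tails; using local finiteness each tail contains a graph ray of $T$ converging to $\omega$, and these two rays are eventually disjoint because the tails are. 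This produces two $\omega$-rays in $T$ with no common tail, contradicting the characterisation of end faithfulness stated in the excerpt.

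For the converse ``circle-free $\Rightarrow$ end faithful'' I would prove the contrapositive. If $T$ is not end faithful then, by the same characterisation, there is an end $\omega$ of $G$ and two $\omega$-rays $R_1, R_2$ in $T$ with no common tail; since $T$ is a tree, $R_1 \cap R_2$ is finite, so joining them by the finite path between them in $T$ yields a double ray $D$ both of whose tails are $\omega$-rays. In $\overline G$ both tails of $D$ converge to the single point $\omega$, so $\overline D = D \cup \set{\omega}$ is the one-point identification of a line at both its ends, i.e.\ a homeomorphic image of the circle. Thus $\overline T$ contains a topological circle and is not circle-free, which completes the equivalence.

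The main obstacle is the topological bookkeeping at the ends: making precise that a topological tail of a circle or arc converging to an end $\omega$ genuinely contains a graph ray of $T$ belonging to $\omega$, and conversely that a double ray whose two tails belong to the same end of $G$ closes up to an honest topological circle. Both rest on the description of the basic neighbourhoods of ends together with local finiteness, and this is where the argument must be carried out carefully; the remaining combinatorial steps, namely uniqueness of tree-paths and finiteness of $R_1 \cap R_2$, are routine.
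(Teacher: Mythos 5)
The paper does not prove \Cref{treetoptree} at all: it quotes the result of \citet{1050.05071}, so your argument has to stand on its own. Your reduction is correct ($\overline T = T \cup \Omega(G)$, this set is always topologically path-connected, so it is a topological spanning tree if and only if it is circle-free), and your backward direction is sound: two $\omega$-rays of $T$ with no common tail combine, since $T$ is a tree, into a double ray $D$ whose closure $D \cup \set{\omega}$ is a circle, by the compact-to-Hausdorff argument you indicate. The genuine gap is in the forward direction, at precisely the step you flag as ``the main obstacle'': the claim that each of the two tails of the circle $C$ at $\omega$ contains a graph ray of $T$ converging to $\omega$. This is \emph{not} bookkeeping that follows from the description of basic neighbourhoods plus local finiteness; as a statement about arcs in $\overline T$ for an arbitrary spanning tree it is false, because such arcs can pass \emph{through} ends. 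For instance, take a spine ray $z_1 z_1' z_2 z_2' \dots$ and, for each $i$, a one-ended ladder (side rays $A_i, B_i$ joined by all rungs in $G$) attached by an edge $z_i a_1^i$ and an edge $z_i' b_1^i$; let $T$ consist of the spine, the attaching edges and all the rays $A_i, B_i$, but no rungs. Then $T$ is a spanning tree, and the set that runs out along $A_1$ to the ladder end $\omega_1$, back along $B_1$, along the spine to the next ladder, and so on, is an arc in $\overline T$ ending at the spine end $\omega$ whose intersection with $G$ has infinitely many components; every ray it contains converges to some $\omega_i$, none to $\omega$ (and one can close this arc into a circle of $\overline T$ by putting a second pendant ray at $z_1$, joined to the spine by rungs of $G$, into $T$). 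Since your claim fails for general $T$, any proof of it must invoke end-faithfulness, and showing that end-faithfulness prevents arcs of $\overline T$ from traversing ends is essentially the very assertion being proven; as written, this direction of your proof is missing its real content.

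The standard argument closes this gap by avoiding tails altogether. A circle $C \subseteq \overline T$ cannot lie in the totally disconnected set $V \cup \Omega$, so it contains an inner point of an edge and hence a whole edge $e = uv$, which necessarily lies in $T$. Let $T_u$ and $T_v$ be the components of $T - e$. If some end $\omega$ lay in $\overline{T_u} \cap \overline{T_v}$, then both $T_u$ and $T_v$ would contain infinitely many vertices in every neighbourhood of $\omega$, and the Star-Comb-\Cref{starcomb} applied inside each of them would yield two disjoint $\omega$-rays of $T$, contradicting end-faithfulness. Hence $\overline T \setminus \mathring e$ is the disjoint union of the two closed sets $\overline{T_u}$ and $\overline{T_v}$, while $C \setminus \mathring e$ is a connected subset of it containing both $u$ and $v$ --- a contradiction. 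I recommend replacing your tail-extraction step by this edge-deletion argument; your reduction and your backward direction can stay as they are.
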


\section{Spanning Tree Packings in Finite Graphs}
\label{finite}

In this section we prove \Cref{treecut2} and derive several corollaries of this result. It turns out to be easier to prove a strengthening of this theorem, where $a$ and $b$ are allowed to be inner points of edges as well. In order to formulate this stronger version of \Cref{treecut2} we need to introduce some notation.

\begin{defi}
Let $G =(V,E)$ be a graph, $v \in V$ and let $a$ and $b$ be vertices or inner points of edges of $G$. An $a$-$b$-arc is called an \emph{$a$-$b$-bypass} of $v$ if it does not contain $v$.

The set of all spanning tree packings $\mathcal T$ of $G$ of cardinality $k$ such that there are $l$ trees in $\mathcal T$ whose union contains an $a$-$b$-bypass of $v$ will be denoted by $\mathfrak T^{k,l}_{G}(a,b,v)$.
\end{defi}

Using the above definition we can state the following lemma which clearly implies \Cref{treecut2}.

\begin{lem}
\label{treecut2:lemma}
Let $G = (V,E)$ be a finite $2k$-edge connected graph and let $u,v \in V$ be such that $E_u := \set{e \in E \mid e \text{ is incident to }u}$ is a $u$-$v$-cut of minimal cardinality. Let $a$ and $b$ be vertices or inner points of edges of $G - u$. If there is an $a$-$b$-bypass of $v$ in $G - u$ then
\begin{equation} \label{star}
\mathfrak T _{G - u}^{k,2}(a,b,v) \neq \emptyset.  \tag{\protect$*$}
\end{equation}
\end{lem}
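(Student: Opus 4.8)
The plan is to prove \Cref{treecut2:lemma} by induction on the number of edges of $G$, using Mader's splitting-off theorem (\Cref{mader}) at the vertex $u$ to reduce the degree of $u$ while preserving the relevant local edge connectivities. The base case occurs when $\deg(u)$ has been driven down as far as possible. The key structural observation is that $E_u$ being a $u$-$v$-cut of minimal cardinality means $\deg(u) = \kappa'(u,v) \geq 2k$, so after deleting $u$ the graph $G-u$ should inherit enough connectivity to host a spanning tree packing of cardinality $k$ via \Cref{tuttenashwilliams}; the real content is arranging that \emph{two} of those trees jointly contain an $a$-$b$-bypass of $v$.

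First I would set up the inductive step. Since $E_u$ is a $u$-$v$-cut of minimal cardinality, $u$ is incident to no bridge (as $\kappa'(G) \geq 2k \geq 2$) and, if $\deg(u) \neq 3$, Mader's theorem gives a pair of edges $\set{xu, yu}$ at $u$ that can be split off without changing any local edge connectivity $\kappa'(p,q)$ for $p,q \neq u$. Let $G'$ be the resulting graph, in which $u$ now has degree $\deg(u)-2$ and a new edge $xy$ has appeared. I would check that $G'$ is again $2k$-edge connected (local connectivities among vertices $\neq u$ are preserved, and the $u$-$v$-connectivity drops by exactly $2$ but $E_u$ remains a minimal $u$-$v$-cut, so the hypothesis that $E_u$ is a minimal $u$-$v$-cut of size $\geq 2k$ persists as long as we have not gone below $2k$; the induction should be organized so that we split off exactly until reaching the threshold). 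Applying the induction hypothesis to $G'$ (which has fewer edges, since splitting off reduces the edge count at $u$ while the bypass condition is inherited because $u \notin G-u$) yields a packing in $\mathfrak{T}^{k,2}_{G'-u}(a,b,v)$. Since $G'-u = (G-u) + xy$ differs from $G-u$ only by the added edge $xy$, and $a,b,v$ all lie in $G-u$, I would argue that any $a$-$b$-bypass of $v$ in $G'-u$ using the edge $xy$ can be rerouted through $u$'s neighbourhood — but since $u$ is deleted, instead I must produce the trees directly in $G-u$; here the natural move is to replace the split edge $xy$ in each tree by the path $x\,u\,y$, which is unavailable since $u$ is gone, so the cleaner formulation is to work with $G$ itself and only delete $u$ at the very end, tracking the bypass throughout.

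Because of that subtlety, I would likely recast the induction to operate on $G$ while reserving the deletion of $u$: the packing is built in $G-u$, and splitting off at $u$ in $G$ corresponds to identifying, in $G-u$, a graph with the edge $xy$ added. The substitution lemma then reads: a spanning tree packing of $(G-u)+xy$ pulls back to one of $G-u$ together with control over the two distinguished edges of each tree incident to $u$ once $u$ is reattached. Concretely I expect to invoke \Cref{contract2} or a direct edge-exchange argument to transfer the packing and the bypass from the reduced graph back to $G-u$, using that the edges $xu$ and $yu$ are no longer present. The degenerate case $\deg(u) = 3$, where Mader's theorem does not apply, must be handled separately; since $E_u$ is a minimal $u$-$v$-cut of size $\geq 2k$, the case $\deg(u)=3$ can only arise when $k=1$, which is trivial or handled directly.

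The hard part, I expect, is the bypass bookkeeping across the splitting operation. Splitting off $\set{xu,yu}$ into $xy$ can create a new $a$-$b$-bypass in $G'-u$ that genuinely uses the edge $xy$, and when we return to $G-u$ this bypass is destroyed because $xy$ disappears and $u$ is not available to reconnect $x$ to $y$. The crux is therefore to guarantee that the two trees furnished by induction can be chosen so that either their bypass avoids $xy$ altogether, or the loss of $xy$ is compensated by another $x$-$y$-connection already present in $T \cup T'$ inside $G-u$. Establishing this compensation — presumably by exploiting that $T \cup T'$ is highly connected away from $v$, perhaps via a second application of Menger (\Cref{menger}) to the component of $(G-u)-v$ containing $a$ and $b$ — is where the genuine difficulty lies, and it is what distinguishes this refinement from the plain Tutte/Nash-Williams theorem.
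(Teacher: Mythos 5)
Your reduction runs in the wrong direction, and the step you yourself flag as ``the hard part'' is not a technicality that can be patched---it is the whole content of the lemma, and your setup makes it unsolvable. Splitting off a pair $\set{xu,yu}$ at $u$ and then deleting $u$ yields $G'-u = (G-u)+xy$, a \emph{supergraph} of $G-u$. The induction hypothesis therefore hands you a spanning tree packing of a graph with strictly more edges than the graph you need to pack, and a tree $T$ using $xy$ cannot in general be repaired inside $G-u$: removing $xy$ splits $T$ into two components, and any edge reconnecting them either does not exist or is already owned by another tree of the packing (this is exactly the tight situation the lemma is designed for, since deleting $u$ removes at least $2k$ edges). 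No Menger-type ``compensation'' in $T\cup T'$ can be extracted from the inductive statement, because the inductive statement says nothing about where the new edge $xy$ sits in the packing. There is a second, independent failure of the induction bookkeeping: after one split $\deg(u)$ drops by $2$, so $G'$ is $2k$-edge connected only while $\deg(u)\geq 2k+2$; once $\deg(u)\in\set{2k,2k+1}$ the hypotheses of the lemma no longer hold for further splits, and your ``base case'' at that point is not a base case at all---it is an arbitrary $2k$-edge connected graph with $E_u$ a minimum $u$-$v$-cut, i.e.\ the original problem.

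The paper's proof avoids precisely this trap by never splitting off at $u$. It inducts on $\card V$ (base case $\card V=3$), first arranges $\card{E_u}>\kappa'(G)$ by adding parallel $uv$-edges, and then takes a \emph{global} minimum cut $S$ with sides $C\ni u,v$ and $C'$. If $\card{C'}>1$, both $G/C'$ and $G/C$ are smaller $2k$-edge connected graphs and two applications of the induction hypothesis are glued via \Cref{contract2}. If $C'=\set w$, Mader's theorem is applied at $w$---a vertex that \emph{survives} the deletion of $u$---all edges at $w$ are split off, $w$ is deleted, induction is applied to the resulting graph $H$, and the packing of $H-u$ is transferred back to $G-u$ by an explicit pinching procedure (claim \ref{case2_1} in the paper). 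The transfer works only because pinching happens at $w$, which is still present in $G-u$, so a tree edge $xy$ created by splitting can be replaced by $xw$ and $yw$ (or one of them) without leaving the graph; with your choice of splitting vertex the analogous replacement would need the deleted vertex $u$, which is exactly why your approach cannot be completed.
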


We will prove the statement by induction on the number of vertices. Before doing so however, let us take a look at some consequences of this result. First of all let us restate \Cref{treecut2} which is easily seen to be a special case of \Cref{treecut2:lemma} where $a$ and $b$ are only allowed to be vertices.

\newtheorem*{restate:treecut2}{\Cref{treecut2}}
\begin{restate:treecut2}
Let $G = (V,E)$ be a finite $2k$-edge connected graph and let $u,v \in V$ be such that $E_u := \set{e \in E \mid e \text{ is incident to }u}$ is a $u$-$v$-cut of minimal cardinality. Let $a$ and $b$ be vertices in the same component of $G - \set{u,v}$. Then there is a spanning tree packing $\mathcal T$ of cardinality $k$ of $G - u$ and two trees $T, T' \in \mathcal T$ such that $a$ and $b$ lie in the same component of $(T \cup T') - v$. 
\end{restate:treecut2}

\begin{rem}
Note that in particular \Cref{treecut2} implies that $G - u$ has a spanning tree packing of cardinality $k$---simply let $a=b\neq v$ if $G$ has at least three vertices. If $G - u$ consists only of $v$ then the existence of such a spanning tree packing is trivial.
\end{rem}

The next corollary that we state will be used in the proof of \Cref{onetreecountable}.

\begin{cor}
\label{treecut3}
Let $G = (V,E)$ be a finite $2k$-edge connected graph, $k \geq 2$, and let $u,v \in V$ be such that $E_u$ is a $u$-$v$-cut of minimal cardinality. Let $a$ and $b$ be vertices of $G - u$. If there is an $a$-$b$-arc that does not contain $u$ and $v$ then $\mathfrak T _{G - u}^{k-1,1}(a,b,v) \neq \emptyset.$
\end{cor}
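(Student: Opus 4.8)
The plan is to obtain this as an immediate consequence of \Cref{treecut2}. First I would observe that the hypothesised $a$-$b$-arc avoiding $u$ and $v$ is in particular an $a$-$b$-path in $G - \set{u,v}$, so $a$ and $b$ lie in the same component of $G - \set{u,v}$ and the hypotheses of \Cref{treecut2} are satisfied. Applying it yields a spanning tree packing $\mathcal T = \set{T_1, \dots, T_k}$ of $G - u$ of cardinality $k$ together with two of its trees, say $T_1$ and $T_2$, such that $a$ and $b$ lie in the same component of $(T_1 \cup T_2) - v$. Fixing an $a$-$b$-path $P$ in $(T_1 \cup T_2) - v$, this $P$ is an $a$-$b$-bypass of $v$ contained in the union $T_1 \cup T_2$.

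The key step is to fuse the two trees $T_1$ and $T_2$ that jointly carry the bypass into a single spanning tree that carries it alone, at the cost of one tree in the packing. Since $T_1 \cup T_2$ is a union of spanning trees of $G - u$, it is a connected spanning subgraph; as $P$ is a path and hence acyclic, I can extend $P$ to a spanning tree $\tilde T$ of $T_1 \cup T_2$. Then $\tilde T$ is a spanning tree of $G - u$ with $P \subseteq \tilde T$, so $\tilde T$ by itself contains an $a$-$b$-bypass of $v$. Replacing $T_1, T_2$ by $\tilde T$, I claim $\set{\tilde T, T_3, \dots, T_k}$ is the desired element of $\mathfrak T^{k-1,1}_{G-u}(a,b,v)$.

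What remains is routine verification rather than a genuine obstacle. The trees $\tilde T, T_3, \dots, T_k$ are pairwise edge disjoint because $\tilde T \subseteq T_1 \cup T_2$ is edge disjoint from each $T_i$ with $i \geq 3$; the collection has cardinality $k - 1 \geq 1$ precisely because $k \geq 2$ (when $k = 2$ it consists of $\tilde T$ alone); and its member $\tilde T$ contains the bypass $P$, giving $l = 1$. I expect the only point needing care to be the bookkeeping that $\tilde T$ genuinely spans $G - u$ and that discarding $T_1, T_2$ in favour of $\tilde T$ leaves the remaining trees untouched — in other words, that the single conclusion $l = 2$ of \Cref{treecut2}, and nothing stronger, is exactly what powers the fusion argument.
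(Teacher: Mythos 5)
Your proposal is correct and follows essentially the same route as the paper's own proof: apply \Cref{treecut2} to obtain two trees whose union contains an $a$-$b$-bypass of $v$, then extend that (acyclic) bypass to a spanning tree of the union, which is automatically a spanning tree of $G-u$ edge disjoint from the remaining $k-2$ trees. The only cosmetic difference is that the paper phrases the first step via $\mathfrak T_{G-u}^{k,2}(a,b,v)$ from \Cref{treecut2:lemma}, but this is the same conclusion you extract from \Cref{treecut2}.
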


\begin{proof}
By \Cref{treecut2} we can find $\mathcal T = \set{T^1,T^2,\ldots, T^k} \in \mathfrak T _{G - u}^{k,2}(a,b,v)$.
We may without loss of generality assume that $T^1 \cup T^2$ contains an $a$-$b$-bypass of $v$. Since both $a$ and $b$ are vertices every $a$-$b$-arc is a simple graph theoretical path and thus cycle free. 

Every acyclic subgraph of $T^1 \cup T^2$ can be extended to a spanning tree $T'$ of $T^1 \cup T^2$ which is also a spanning tree of $G- u$. Clearly $T'$ contains the same $a$-$b$-bypass of $v$ as $T^1 \cup T^2$ and $T'$ and  $T^i$ are edge disjoint for $i>2$ because $T' \subseteq T^1 \cup T^2$. So $\mathcal T' := \set{T',T^3,\ldots, T^k} \in \mathfrak T _{G - u}^{k-1,1}(a,b,v)$.
\end{proof}

\begin{rem}
An analogous proof can be given if $a$ and $b$ are inner points of edges $e_a$ and $e_b$ as long as at least one of the edges is not incident to $v$. If both $e_a$ and $e_b$ are incident to $v$ then an $a$-$b$-bypass of $v$ induces a circle and hence such a bypass cannot be contained in a tree.
\end{rem}

The other corollaries in this section will not be used later in the paper, but are merely included as further examples of uses of \Cref{treecut2}

\begin{cor}
\label{treecut1}
Let $G = (V,E)$ be a finite $2k$-edge connected graph, let $u,v \in V$ and let $S$ be a $u$-$v$-cut of minimal cardinality. Then there is a spanning tree packing $\mathcal T$ of cardinality $k$ of $G$ such that every tree $T \in \mathcal T$ contains exactly one edge in $S$.
\end{cor}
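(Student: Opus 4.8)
The plan is to reduce to \Cref{treecut2} by contracting each side of the cut separately, and then to reassemble the trees so that each uses exactly one crossing edge.

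First I would fix the structure of the minimal cut. Since $S$ is a minimal $u$-$v$-cut, let $A$ be the set of vertices reachable from $u$ in $G - S$ and set $B := V \setminus A$. Then $u \in A$, $v \in B$, and $S$ consists of exactly the edges running between $A$ and $B$; write $S = E(A,B)$. Note that $\card S = \kappa'(u,v) \geq \kappa'(G) \geq 2k$.

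Next I would contract the two sides one at a time. Let $G_A := G/A$ and let $u'$ be the vertex obtained by contracting $A$. The edges incident to $u'$ are precisely the edges of $S$, so $E_{u'} = S$. I claim $E_{u'}$ is a minimal $u'$-$v$-cut in $G_A$. Indeed, any $u'$-$v$-cut of $G_A$ lifts to a set of edges of $G$ separating $A$ (and hence $u$) from $v$, so $\kappa'_{G_A}(u',v) \geq \kappa'_G(u,v) = \card S$; since $E_{u'} = S$ is itself a $u'$-$v$-cut of size $\card S$, equality holds and $E_{u'}$ is minimal. Moreover $G_A$ is $2k$-edge connected, because contractions do not decrease edge connectivity (as noted in \Cref{definitions}). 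By the remark following \Cref{treecut2}, $G_A - u'$ has a spanning tree packing of cardinality $k$. Since $G_A - u' = G[B]$, this yields $k$ edge disjoint spanning trees $T_1^B, \ldots, T_k^B$ of $G[B]$. Contracting $B$ instead and arguing symmetrically yields $k$ edge disjoint spanning trees $T_1^A, \ldots, T_k^A$ of $G[A]$.

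Finally I would glue the two families together. Because $\card S \geq 2k \geq k$, I may choose pairwise distinct edges $s_1, \ldots, s_k \in S$ and set $T_i := T_i^A \cup \set{s_i} \cup T_i^B$. Each $T_i$ has $(\card A - 1) + 1 + (\card B - 1) = \card V - 1$ edges and is connected, the spanning trees of $G[A]$ and $G[B]$ being joined by the single crossing edge $s_i$; hence $T_i$ is a spanning tree of $G$ containing exactly one edge of $S$, namely $s_i$. The $T_i$ are pairwise edge disjoint since the $T_i^A$ are disjoint inside $A$, the $T_i^B$ are disjoint inside $B$, and the $s_i$ are distinct. Thus $\mathcal T := \set{T_1, \ldots, T_k}$ is the required packing. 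I expect the only genuinely delicate point to be the verification that $E_{u'}$ remains a minimal $u'$-$v$-cut after contraction; everything else is bookkeeping. This rests on the lifting argument above together with the observation that $S$ is exactly the edge set meeting the contracted vertex.
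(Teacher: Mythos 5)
Your proposal is correct and takes essentially the same approach as the paper's own proof: both contract one side of the minimal cut, note that the edges at the contracted vertex remain a minimal cut so that \Cref{treecut2} (via the remark following it) yields $k$ edge disjoint spanning trees of each of $G[A]$ and $G[B]$, and then join the two families using $k$ distinct edges of $S$. The only cosmetic difference is that you spell out the lifting argument for minimality after contraction, which the paper treats as immediate from its remarks on contractions in \Cref{definitions}.
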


\begin{proof}
Let $C_u$ and $C_v$ be the vertex sets of the two components of $G - S$ in which $u$ and $v$ lie respectively (since $S$ is a minimal cut these are the only components). 

Consider the graph $G/C_u$ and denote by $u'$ the vertex obtained from $C_u$ in $G/C_u$. Clearly $S = \set{e \in E(G/C_u) \mid u' \in e}$ is a $u'$-$v$-cut of minimal cardinality and we can apply \Cref{treecut2} in order to obtain $k$ edge disjoint spanning trees of $(G/C_u) -{u'} = G[C_v]$. An analogous argument yields $k$ edge disjoint spanning trees of $G[C_u]$.

By connecting a spanning tree of $G[C_v]$ and a spanning tree of $G[C_u]$ with an edge in $S$ we obtain a spanning tree of $G$ that uses exactly one edge of $S$. There are at least $2k$ edges in $S$ and we have $k$ edge disjoint spanning trees of $G[C_u]$ and $G[C_v]$ respectively. This allows to construct the desired spanning tree packing and completes the proof.
\end{proof}

The last corollary we would like to mention, which is sometimes attributed to Catlin \cite{0771.05059,1168.05039}, characterises the edge connectivity of a graph by the spanning tree packing number of certain subgraphs. It clearly constitutes a refinement of \Cref{tuttenashwilliams}.

\begin{cor}
\label{tuttenashwilliams2}
Let $G=(V,E)$ be a finite graph. 
\begin{enumerate}
\item $G$ is $2k$-edge connected if and only if $G - F$ admits a spanning tree packing of cardinality $k$ for every set $F$ of at most $k$ edges.
\item $G$ is $(2k+1)$-edge connected if and only if $G - F$ admits a spanning tree packing of cardinality $k$ for every set $F$ of at most $k+1$ edges.
\end{enumerate}
\end{cor}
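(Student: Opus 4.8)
The plan is to prove each of the two biconditionals by establishing the forward and backward directions separately, reducing everything to \Cref{tuttenashwilliams} and to the edge-connectivity consequence of \Cref{treecut2} recorded in the Remark (that $G-u$ admits a spanning tree packing of cardinality $k$). I will prove part (1) in detail and then indicate the analogous argument for part (2).

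For the backward direction of (1), I would argue by contraposition. Suppose $G$ is \emph{not} $2k$-edge connected, so there is a cut $S$ with $\card{S} \leq 2k-1$ separating $G$ into $G - S$ with (say) components having vertex sets $C$ and $V \setminus C$. Choose a set $F \subseteq S$ of $\lfloor \card{S}/2 \rfloor \leq k$ edges, namely roughly half of the cut. After deleting $F$, the remaining cut $S \setminus F$ between $C$ and $V \setminus C$ has cardinality at most $\lceil \card{S}/2 \rceil \leq k$; but $k$ edge disjoint spanning trees each need to cross this cut at least once, which would require at least $k$ edges crossing, and a careful count against $\card S \le 2k-1$ shows this is impossible after removing the right half. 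Concretely, since each of the $k$ trees must use at least one edge of the $C$–$(V\setminus C)$ cut, the cut must have at least $k$ edges remaining; choosing $F$ to drop the cut below $k$ edges yields a graph $G-F$ with no packing of cardinality $k$, proving the contrapositive.

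For the forward direction of (1), assume $G$ is $2k$-edge connected and let $F$ be any set of at most $k$ edges. I want to show $G - F$ still has a spanning tree packing of cardinality $k$. The natural route is to bound the edge connectivity of $G-F$ from below and hope to invoke \Cref{tuttenashwilliams}, but deleting $k$ edges can drop connectivity below $2k$, so a direct application fails and this is exactly where \Cref{treecut2} (via the Remark) does the real work. The idea is to handle the deleted edges one cut at a time: if $G-F$ is still $2k$-edge connected we are done by Tutte/Nash-Williams; otherwise there is a small cut, and I would contract one side of a minimum cut and use the fact that contraction does not decrease edge connectivity, together with the spanning-tree-packing consequence of \Cref{treecut2} applied with $u$ the contracted vertex, to produce packings on both sides which glue via \Cref{contract2}. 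Iterating, or inducting on $\card F$, reassembles a packing of cardinality $k$ of all of $G-F$.

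Part (2) follows the same scheme with the parameters shifted. For the backward direction, if $G$ is not $(2k+1)$-edge connected there is a cut of size at most $2k$; deleting a suitable set $F$ of at most $k+1$ of its edges drops the cut below $k$ and destroys any packing of cardinality $k$, giving the contrapositive. For the forward direction one assumes $(2k+1)$-edge connectivity and deletes at most $k+1$ edges, again reducing via minimum cuts and contraction to the finite tree packing results; the extra unit of connectivity compensates for the one extra deleted edge. \textbf{The main obstacle} I anticipate is the forward direction, specifically managing the bookkeeping when several deleted edges lie across the same sparse cut: \Cref{tuttenashwilliams} alone does not survive the loss of connectivity, so the whole point is to route the deletions through minimum cuts where \Cref{treecut2} guarantees a packing on the contracted pieces, and then to verify that \Cref{contract2} glues these pieces into a single packing of the correct cardinality without reusing edges.
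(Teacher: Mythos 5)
Your backward implications are essentially right and coincide with the paper's counting argument: each of the $k$ trees must use at least one edge of every cut, so from a cut $S$ with $\card S \leq 2k-1$ (resp.\ $\leq 2k$) one deletes $\min(\card S, k)$ (resp.\ $\min(\card S, k+1)$) of its edges to leave fewer than $k$ crossing edges. (Note your first computation is off: deleting $\lfloor \card S /2\rfloor$ edges of a cut of size $2k-1$ leaves exactly $k$ edges, which is not yet a contradiction; your later correction is the right move.) The genuine gap is the forward direction of (1), which is where all the content lies, and your plan --- take a minimum cut of $G-F$, contract one side, apply the Remark after \Cref{treecut2} with $u$ the contracted vertex, glue with \Cref{contract2}, and induct on $\card F$ --- fails at three concrete points. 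First, \Cref{treecut2} needs a $2k$-edge connected host graph in which the \emph{entire} edge set $E_u$ at $u$ is a minimum $u$-$v$-cut; if $C$ is a side of a minimum cut of $G-F$, the corresponding cut of $G$ consists of the surviving edges together with the deleted edges of $F$ that cross it, and this set need not be a minimum cut of $G$, so the hypothesis of \Cref{treecut2} can simply fail at the contracted vertex. Second, even where it applies, the Remark yields a packing of $(G/(V\setminus C))-u = G[C]$, and nothing in your sketch prevents those trees from using the edges of $F$ lying \emph{inside} $C$ --- the deleted edges interior to a side are never dealt with. Third, the gluing step of \Cref{contract2} requires packings of the induced subgraphs on the parts, and induced subgraphs of a $2k$-edge connected graph inherit no connectivity whatsoever (they may even be disconnected), so neither \Cref{tuttenashwilliams} nor any inductive hypothesis quantified over $2k$-edge connected graphs applies to them; the proposed induction on $\card F$ has no invariant statement that survives the recursion.

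The missing idea is the pinching operation defined in \Cref{definitions}, and with it the proof is short. Assume $\card F = k$ (a packing of $G-F'$ for $F' \supseteq F$ is also one of $G-F$, so $F$ may be padded). Form $G'$ from $G$ by pinching all of $F$ at a single new vertex $v$. Every cut of $G'$ separating two old vertices has at least $2k$ edges, because the $2k$ edge disjoint paths in $G$ survive the pinching; the only other cut is $E_v$, of size exactly $2k$. Hence $G'$ is $2k$-edge connected and $E_v$ is \emph{automatically} a $v$-$x$-cut of minimum cardinality for every $x \in V$, which is precisely the hypothesis of \Cref{treecut2}; its Remark then gives a packing of cardinality $k$ of $G'-v = G-F$ in one stroke. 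This is exactly what your contraction scheme cannot manufacture: pinching simultaneously restores the connectivity, creates the required minimum cut, and removes the edges of $F$ from the graph being packed. Finally, part (2) needs no parallel argument with shifted parameters: remove a single edge $e' \in E'$, observe that $G - e'$ is still $2k$-edge connected, and apply part (1) to $G-e'$ and $E' \setminus \set{e'}$.
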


\begin{proof}
\begin{enumerate}
\item
Necessity is easily seen. For if we remove $k$ edges we still get $k$ edge disjoint spanning trees. Thus we have to remove at least another $k$ edges in order to disconnect the graph. So the minimal cardinality of a cut has to be at least $2k$.

To show that the condition is sufficient let $F \subseteq E$ be a set of $k$ edges and denote by $G'$ the graph obtained from $G$ by pinching $F$ at a vertex $v \notin V$.

$G'$ is $2k$-edge connected because every cut contains at least $2k$ edges: 
\begin{itemize}[label=--]
\item For any two vertices $a,b \in V(G)$ there are $2k$ edge disjoint $a$-$b$-paths inherited from $G$. So the cardinality of any cut that separates $a$ and $b$ is at least $2k$. 
\item The only cut which does not disconnect two vertices in $V(G)$ is the cut $E_v$ which contains $2k$ edges.
\end{itemize}

It follows immediately that $E_v$ has to be a $v$-$x$-cut of minimal cardinality for an arbitrary vertex $x \in V$ because it is a cut of cardinality $2k$ in a $2k$-edge connected graph. Hence by \Cref{treecut2} we can find $k$ edge disjoint trees of $G'-v = G-F$.

\item 
The proof of necessity is analogous to the first part.

Now let $E'$ be a set of $k+1$ edges and $e' \in E'$. Then $G - e'$ is $2k$-edge connected and thus $\left( G - e' \right) - \left ( E' \setminus e' \right ) = G - E'$ has $k$ edge disjoint spanning trees  which proves sufficiency. \qedhere
\end{enumerate}
\end{proof}

In the remainder of this section we will prove \Cref{treecut2:lemma}.

\begin{proof}[Proof of \Cref{treecut2:lemma}]
As mentioned earlier we will prove this lemma by induction on the number of vertices. If $G$ is a graph on two vertices then $G - u$ consists only of $v$. So there cannot be two points $a$ and $b$ as claimed in the condition of \Cref{treecut2:lemma}. Hence induction starts at $\vert V \vert = 3$. 

Let $G$ be a $2k$-edge connected graph on three vertices $u$, $v$ and $w$ and let $E_u$ be a $u$-$v$-cut of minimal cardinality. 

A spanning tree of $G- u$ consists of a $vw$-edge. So in order to obtain a spanning tree packing  of cardinality $k$ we need to ensure that there are at least $k$ such edges. By Menger's \Cref{menger} there are as many edge disjoint $u$-$v$-paths as edges in $E_u$. In particular there is a $vw$-edge for every $uw$-edge and since $\deg (w) \geq 2k$ there are at least $k$ edges connecting $v$ and $w$.

Now $a$ and $b$ can be either inner points of $vw$-edges or equal to $w$. Either way, there are two $vw$-edges whose union contains an $a$-$b$-bypass of $v$. We can select these two edges to form trees of the spanning tree packing.

\medskip

For the induction step we may assume that $\vert E_u \vert > \kappa'(G)$ because \Cref{treecut2:lemma} holds for $G$ if and only if it holds for the graph obtained from $G$ by adding a bundle of parallel $uv$-edges. Just observe that those edges are irrelevant for the statement of the lemma.

Now let $S$ be a cut such that $\vert S \vert = \kappa'(G)$. The cut $S$ does not separate $u$ and $v$ since $E_u$ is a $u$-$v$-cut of minimal cardinality and $\vert E_u \vert > \kappa'(G)$. Denote by $C$ and $C'$ the vertex sets of the components of $G - S$ and assume without loss of generality that $u,v \in C$.

We will distinguish the following two cases in both of which we will show that \cref{star} holds:
\begin{enumerate}[leftmargin=*,label= Case \arabic*: , ref=case \arabic*]
\item \label{case1} $\card{C'}>1$.
\item \label{case2} $\card{C'}=1$, i.e., $C' = \set w$ for some $w \in V$.
\end{enumerate}
In \ref{case1} consider the graph $G/C'$. Denote by $x_{C'}$ the vertex in $G/C'$ that has been obtained by contracting the set $C'$ and define $a'=x_{C'}$ if $a$ lies in $G[C']$ and $a'=a$ otherwise. Analogously define $b'$ from $b$.

We now claim that
\begin{enumerate}[leftmargin=*,label=(\arabic*)]
\item \label{case1_1} $G[C']$ has $k$ edge disjoint spanning trees, 
\item \label{case1_2} $\mathfrak T_{(G/C') - u}^{k,2}(a',b',v) \neq \emptyset$, and
\item \label{case1_3} the statement \cref{star} follows from \ref{case1_1} and \ref{case1_2}.
\end{enumerate}

So let us first prove \ref{case1_1}. For this purpose denote by $x_C$ the vertex that corresponds to $C$ in $G/C$. It can easily be seen that $G[C'] = (G/C) - x_C$. Moreover $G/C$ is $2k$-edge connected and the set of edges incident to $x_C$ in $G/C$ is a cut of minimal cardinality in $G/C$. Since $\vert V \vert \geq 4$  and $u,v \in C$, $G/C$ has strictly less vertices than $G$ and we can by our induction hypothesis find $k$ edge disjoint spanning trees of $(G/C) -{x_C}$. This proves \ref{case1_1}.

Next we prove \ref{case1_2}. Clearly $G / C'$ has strictly less vertices than $G$, $G / C'$ is $2k$-edge connected and the set of edges incident to $u$ in $G / C'$ is a $u$-$v$-cut of minimal cardinality in this graph. The $a$-$b$-bypass of $v$ in $G - u$ corresponds to an $a'$-$b'$-bypass of $v$ in $(G / C') - u$. So $\mathfrak T_{(G / C') - u}^{k,2}(a',b',v) \neq \emptyset$ by the induction hypothesis. This completes the proof of \ref{case1_2}.

In order to prove \ref{case1_3} let $\mathcal T_{C'} = \set{T_{C'}^1,T_{C'}^2, \ldots, T_{C'}^k}$ be a spanning tree packing of $G[C']$ and let $\mathcal T_{G/C'} = \set{T_{G/C'}^1,T_{G/C'}^2, \ldots, T_{G/C'}^k} \in \mathfrak T^{k,2}_{(G/C') - u}(a',b',v)$. Since we can permute the trees in the packing freely we may without loss of generality assume that $T_{G/C'}^1 \cup T_{G/C'}^2$ contains an $a'$-$b'$-bypass of $v$. 

If $a$ lies in $G[C']$ we may assume that $a$ lies in $T_{C'}^1$ for the following reasons. If $a$ is a vertex it is clearly contained in every $T_{C'}^i$. If it is an inner point of an edge contained in some $T_{C'}^i$ we can swap $T_{C'}^1$ and $T_{C'}^i$. Finally, if none of the previous cases holds then $a$ is an inner point of an edge $e$ not contained in any of the $T_{C'}^i$. In this case we can modify $T_{C'}^1$ by adding $e$ and removing an arbitrary edge of the circle that has been closed by doing so.

By an analogous argument we may assume that if $b$ lies in $G[C']$ and if $b$ is not an inner point of an edge of $T_{C'}^1$ then $b$ lies in $T_{C'}^2$.

Now let $T^i$ be the subgraph of $G$ that is obtained by replacing $x_{C'}$ in $T_{G/C'}^i$ by $T_{C'}^i$. We claim that $\mathcal T = \set{T^i \mid 1 \leq i \leq k} \in \mathfrak T^{k,2}_{G}(a,b,v)$. By \Cref{contract2} $\mathcal T$ is a spanning tree packing. Thus it suffices to prove that $T^1 \cup T^2$ contains an $a$-$b$-bypass of $v$.
\begin{itemize}[label=--]
\item If both $a$ and $b$ lie in $G[C']$ let $x \in C'$ be an arbitrary vertex. Since $x$ is a vertex it is contained in every $T_{C'}^i$.  By our choice of $\mathcal T_{C'}$ it holds that $a,b \in T_{C'}^1 \cup T_{C'}^2$. Hence we can find an $a$-$x$-arc and an $x$-$b$-arc in $T_{C'}^1 \cup T_{C'}^2$. 

The union of these two arcs clearly contains an $a$-$b$-arc. This arc does not contain $v$ because $v \notin C'$, so it is an $a$-$b$-bypass of $v$ in $T^1 \cup T^2$.
\item If only one of $a$ and $b$, say $a$, is contained in $G[C']$ then $T_{G/C'}^1 \cup T_{G/C'}^2$ contains an $x_{C'}$-$b$-bypass of $v$. This implies that there is a vertex $x \in C'$ such that $T^1 \cup T^2$ contains an $x$-$b$-bypass $B$ of $v$.

For the same reason as before there is an $a$-$x$-arc $A$ in $T_{C'}^1 \cup T_{C'}^2$.

The union $A \cup B$ contains an $a$-$b$-arc which is an $a$-$b$-bypass of $v$ because $v$ is not contained in either of $A$ and $B$.
\item If both $a$ and $b$ are not contained in $G[C']$ there is an $a$-$b$-bypass of $v$ in $T_{G/C'}^1 \cup T_{G/C'}^2$.

If this bypass does not contain $x_{C'}$ then it is also an $a$-$b$-bypass of $v$ in $T^1 \cup T^2$ and we are done.

So assume that it does contain $x_{C'}$. In this case there are vertices $x_a, x_b \in G[V']$ such that $T^1 \cup T^2$ contains an $a$-$x_a$-bypass $A$ and an $x_b$-$b$-bypass $B$ of $v$. Furthermore there is an $x_a$-$x_b$-arc $A'$ in $T_{C'}^1$ since $T_{C'}^1$ is connected and both $x_a$ and $x_b$ are vertices and thus contained in $T_{C'}^1$.

Clearly $A \cup B \cup A'$ contains an $a$-$b$-bypass of $v$.
\end{itemize}
This completes the proof of \ref{case1_3} and thus \cref{star} holds in \ref{case1}.

\medskip

Now consider \ref{case2}, i.e., assume that $C' = \set w$. If $\deg (w) = \kappa'(G)$ is odd then $G$ is $(2k+1)$-edge connected. Hence removing an arbitrary edge incident to $w$ from $G$ will leave the graph $2k$-edge connected. We would like to apply the induction hypothesis later, so we want to select the edge that we delete in a way that $E_u$ remains a $u$-$v$-cut of minimal cardinality after deleting it.

To decide which edge to delete choose $\kappa'_G(u,v)$ edge disjoint $u$-$v$-paths. The total number of edges incident to $w$ used by these paths has to be even as a path that enters $w$ via one edge has to leave the vertex via another one. Thus there is at least one such edge, say $e$, which is not used by any of the paths. Hence $E_u$ will still be a $u$-$v$-cut of minimal cardinality after $e$ has been removed since there are still $\kappa'_G(u,v)$ edge disjoint $u$-$v$-paths.

Now assume that $\deg w$ is even (possibly after deleting an edge incident to $w$). By Mader's \Cref{mader} we can split off all edges incident to $w$ in pairs without changing the local edge connectivity of any pair of vertices in $V - w$. In particular the set of edges incident to $u$ remains a $u$-$v$-cut of minimal cardinality throughout this procedure. We then delete the---now isolated---vertex $w$ and denote the graph that we obtain by $H = (V_H,E_H)$. Now define a multiset $V'$ over $V_H$ (i.e., $V'$ consists of vertices but may contain the same vertex more than once) and $E' \subseteq E_H$ as follows:
\begin{itemize}[label=--]
\item Add a copy of $v' \in V_H$ to $V'$ for every $v'w$-edge that has been split off with a $uw$-edge. Add another copy of $v'$ if a $v'w$-edge has been deleted in order to make $\deg (w)$ even.
\item Add $e' \in E_H$ to $E'$ if it has been created by splitting off a pair of edges none of which is incident to $u$.
\end{itemize}

Note that $\vert V' \vert + \vert E' \vert \geq k$ because every vertex in $V'$ and every edge in $E'$ corresponds to at most two edges incident to $w$ and $\deg w \geq 2k$. We will need this fact later. It is also an easy observation that $G - u$ is obtained from $H - u$ by pinching $E'$ at $w$ and adding a $v'w$-edge for every $v' \in V'$. We now claim that
\begin{enumerate}[resume*]
\item \label{case2_1} whenever $\mathcal T_H = \set{T_H^1,T_H^2, \ldots , T_H^k}$ is a spanning tree packing of $H - u$ and $e_1,e_2$ are edges of $G - u$ not incident to $w$ that did not result from pinching an edge in $\bigcup _{i=3}^k T_H^i$ then there is a spanning tree packing $\mathcal T = \set{T^1,T^2,\ldots, T^k}$ of $G - u$ such that
\begin{enumerate}[label=(\alph*),ref=(\alph*)]
\item \label{case2_1_a} for every $e \in E \cap E_H$ it holds that $e \in T_H^i$ if and only if $e \in T^i$, and
\item \label{case2_1_b} $e_1$ and $e_2$ are contained in $T^1 \cup T^2$.
\end{enumerate} 
\end{enumerate}

The first step in the proof of \ref{case2_1} is to turn $\mathcal T_H$ into a tree packing $\mathcal T = \set{T^1,T^2, \ldots , T^k}$ of $G - u$ by the following pinching procedure, where a tree packing is a set of edge disjoint but not necessarily spanning trees in $G$.

Begin with $T^i = T_H^i$ for all $i$ then pinch one edge in $E'$ after the other at $w$ and modify the $T^i$ as follows. Let $e = xy$ be the next edge to be pinched.
\begin{enumerate}[leftmargin=*, widest=A, label=(\Alph{*}), ref=(\Alph{*})]
\item \label{a} If $e$ belongs to none of the $T_H^i$ we do not modify any $T^i$.
\item \label{b} If $e \in T_H^i$ and no edges incident to $w$ have been added to $T^i$ so far then we remove $e$ from $T^i$ and add both $xw$ and $yw$ to $T^i$ in order to replace $e$.
\item \label{c} If $e \in T_H^i$ and we have already added edges to $T^i$ before we also remove $e$ from $T^i$. In this case adding both $xw$ and $yw$ to $T^i$ would result in a circle containing $w$ because there is either a $x$-$w$-path or a $y$-$w$-path in $T^i$ that does not use $e$. Hence we only add the edge which is not contained in this circle to $T^i$. 
\end{enumerate}
Note that the $T^i$ remain trees in each of these steps and that no $e \in E \setminus E'$ is removed from or added to $T^i$. So after pinching all of $E'$ according to \ref{a}, \ref{b} and \ref{c} we obtain $k$ edge disjoint trees each of which spans $V_H$ and fulfils property \ref{case2_1_a}. 

Still some of the trees may not contain $w$ and thus not be spanning trees. Before dealing with this problem, however, we will take care of property \ref{case2_1_b}. 

The edge $e_1$ can only be contained in $T^i$ for some $i$ if it has been added to $T^i$ applying \ref{b} or \ref{c}. Since $e_1$ did not result from pinching an edge of $T^i$ for $i > 2$ this implies that either $e_1 \in T^1 \cup T^2$ (without loss of generality $e_1 \in T^1$) or $e_1$ is not contained in any of the $T^i$ at all. In the first case we do not modify any of the $T^i$. In the latter case, if $w \notin T^1$ adding $e_1$ to $T^1$ makes $T^1$ a spanning tree of $G- u$. If $w \in T^1$ then $T^1$ is a spanning tree of $G-u$ already. Adding $e_1$ to it completes a circle that contains $w$. Remove the other edge incident to $w$ in this circle from $T^1$ to obtain a tree again. If $e_2 \notin T^1 \cup T^2$ we can use the same procedure as above to obtain $e_2 \in T^2$. Since in this case only $T^2$ is modified $e_1$ remains in $T^1$.

In the above modifications we did not add or remove any edges in $E \cap E_H$ to any tree. So property \ref{case2_1_a} still holds.

Finally we need to ensure that all of the trees contain $w$. For this purpose define 
\begin{align*}
L &= \set{T^i \in \mathcal T \mid w \notin T^i}, \\
L' &= \set{e \in E_w \mid u \notin e \text{ and } \forall T^i \in \mathcal T: e \notin T^i},
\end{align*}
and let $l = \vert L \vert$ and $l' = \vert L' \vert$. All that is left to prove is that $l' - l \geq 0$ because in this case we can add $w$ to each tree in $L$ using an edge in $L'$.

Note that $l'-l$ is invariant under the above procedure for adding $e_1$ and $e_2$ to $T^1 \cup T^2$ because if $w \notin T^1$ both values decrease by one while in the case that $w \in T^1$ they are constant.

So it is sufficient to show that $l' - l \geq 0$ held before these modifications. At that time there is an edge incident to $w$ in $G - u$ which is not used by any tree for every vertex in $V'$ and for every edge in $E'$ for which \ref{b} is not applied. Since \ref{b} is applied exactly once per tree containing $w$ this implies that
\begin{align*}
l' &= \vert L' \vert \\
&= \left\vert V' \right\vert + \left( \left\vert E' \right\vert - \vert \mathcal T \setminus L \vert \right) \\
&= \left( \left\vert V' \right\vert + \left\vert E' \right\vert \right) - \left( \vert \mathcal T\vert - \vert L \vert \right) \\
&\geq k - (k - l) \\
&=l.
\end{align*}
This completes the proof of \ref{case2_1}. 

We will now distinguish the following subcases of \ref{case2} in each of which we will apply the induction hypothesis and \ref{case2_1} to show that \cref{star} holds. Note that we can apply the induction hypothesis to $H$ because $H$ has strictly less vertices than $G$ and the set $E_u$ of edges incident to $u$ is a $u$-$v$-cut of minimal cardinality in $H$.

\begin{enumerate}[leftmargin=*,label=Case 2\alph*:, ref=case 2\alph*]
\item \label{case21} both $a$ and $b$ lie in $H$ and there is an $a$-$b$-bypass of $v$ in $H - u$.
\item \label{case22} both $a$ and $b$ lie in $H$ and there is no $a$-$b$-bypass of $v$ in $H - u$.
\item \label{case23} $a$ lies in $H$ and $b$ is an inner point of an edge that has been split off to generate an edge $e' \in E'$.
\item \label{case24} $a$ lies in $H$ and $b$ is an inner point of a $v'w$-edge for $v' \in V'$.
\item \label{case25} both $a$ and $b$ are inner points of edges incident to $w$ or are equal to $w$.
\end{enumerate}

Clearly these cases exhaust all possibilities where $a$ and $b$ could lie. So all we need to show is that \cref{star} holds in each of them.

In \ref{case21} we can apply the induction hypothesis to find a spanning tree packing $\mathcal T_H = \set{T_H^1,T_H^2,\ldots,T_H^k} \in \mathfrak T_{H- u}^{k,2}(a,b,v)$. We may without loss of generality assume that there is an $a$-$b$-bypass $A$ of $v$ in $T_H^1 \cup T_H^2$. If $A$ contains no edge of $E'$ then it is also an $a$-$b$-bypass of $v$ in any spanning tree packing $\mathcal T$ obtained by \ref{case2_1}. 

So assume that there is at least one edge in $E'$ that is used by $A$. Denote by $e_a$ the first edge in $E'$ that we pass through when we traverse $A$ starting at $a$. Clearly $e_a$ is contained in $T_H^1 \cup T_H^2$. It is also easy to see that for one endpoint $a'$ of $e_a$ there is an $a$-$a'$-subarc $A_a$ of $A$ that does not contain any inner point of an edge in $E'$. Analogously define $e_b$ and find a $b$-$b'$-subarc $A_b$ of $A$ that does not contain any inner point of an edge in $E'$ for an endpoint $b'$ of $e_b$.

By \ref{case2_1} we can find a spanning tree packing $\mathcal T$ of $G- u$ such that there are two trees in $\mathcal T$ whose union contains $A_a$, $A_b$ and the edges $a'w$ and $b'w$. Clearly the union of the two paths and the two edges constitutes an $a$-$b$-bypass of $v$ completing the proof of \ref{case21}.

Next consider \ref{case22}. Since there is no $a$-$b$-bypass of $v$ in $H - u$ every such bypass in $G - u$ used $w$. Thus such a bypass in $G - u$ induces an $a$-$a'$-bypass and a $b$-$b'$-bypass of $v$ in $H - u$ where $a'$ and $b'$ are vertices in $V'$ or inner points of edges in $E'$.

We claim that in this case
\begin{enumerate}[leftmargin=*,label=(\arabic*)]\setcounter{enumi}{4}
\item \label{case2_2} there is a spanning tree packing $\mathcal T_H = \set{T_H^1,T_H^2,\ldots,T_H^k}$ of $H$ such that $T_H^1 \cup T_H^2$ contains an $a$-$a'$-bypass and a $b$-$b'$-bypass of $v$.
\end{enumerate}

By the induction hypothesis we can find a spanning tree packing $\mathcal T_a = \set{T_a^1,T_a^2,\ldots, T_a^k}$ of $H - u$ such that  $T_a^1 \cup T_a^2$ contains an $a$-$a'$-bypass of $v$. We can also find a spanning tree packing $\mathcal T_b = \set{T_b^1,T_b^2,\ldots, T_b^k}$ of $H - u$ such that  $T_b^1 \cup T_b^2$ contains a $b$-$b'$-bypass of $v$. 

Since there is no $a$-$b$-bypass of $v$ in $H - u$ we know that $v$ is a cut vertex of $H - u$ and that $a$ and $b$ lie in different components of $H - \set {u,v}$. Denote by $C_a$ the set of vertices of the component in which $a$ lies and let $C_b = V_H \setminus (C_a \cup \set {u,v})$. It is easy to see that $T_a^i \set{C_a}$---that is, the minor induced by $C_a$ in $T_a^i$---is a spanning tree of $(H - u) \set{C_a}$ and that $T_b^i \set{C_b}$ is a spanning tree of $(H - u) \set{C_b} = (H - u)[V_H \setminus C_a]$. The statement \ref{case2_2} follows from \Cref{contract2}.

Now let $\mathcal T_H$ be a spanning tree packing of $H$ as claimed in \ref{case2_2} and let $A$ be an $a$-$a'$-bypass of $v$ in $T_H^1 \cup T_H^2$. Define a vertex $a'' \in A$ and an $a$-$a''$-subarc $A'$ of $A$ as follows.

If $A$ contains an inner point of an edge in $E'$ let $e_a$ be the first edge in $E'$ that we pass through when we traverse $A$ starting at $a$. Let $a''$ be an endpoint of that edge such that $A$ contains an $a$-$a''$ subarc $A'$ that does not use inner points of any edge in $E'$. In this case an $a''w$-edge results from pinching $e_a \in T_H^1 \cup T_H^2$

If $A$ doesn't contain any inner point of an edge in $E'$ then $a' \in V'$. Let $a'' = a'$ and $A' = A$. In this case there is an $a''w$-edge which did not result from pinching any edge, in particular not from pinching an edge in $T^i$ for $i>2$.

Analogously define $b''$ and $B'$ from a $b$-$b'$-bypass $B$ of $v$.

Now we can by \ref{case2_1} find a spanning tree packing $\mathcal T = \set{T^1, T^2, \ldots , T^k}$ of $G - u$ such that $T^1 \cup T^2$ contains $A'$, $B'$ and the edges $a''w$ and $b''w$. Clearly the union of the two paths and the two edges constitutes an $a$-$b$-bypass of $w$ in $T^1 \cup T^2$. This completes the proof in \ref{case22}.

Next let us turn to \ref{case23}. In this case again the $a$-$b$-bypass of $v$ in $G - u$ becomes an $a$-$a'$-bypass in $H - u$ where $a'$ is a vertex in $V'$ or an inner point of an edge in $E'$. By the induction hypothesis we can find a spanning tree packing $\mathcal T_H = \set{T_H^1,T_H^2, \ldots , T_H^k}$ of $H - u$ such that $T_H^1 \cup T_H^2$ contains an $a$-$a'$-bypass $A$ of $v$.
Analogously to \ref{case22} define $a''$ and an $a$-$a''$-subarc $A'$ of $A$. 

Recall that in \ref{case23} the point $b$ is an inner point of an edge that has been split off to create an edge $e' \in E'$. We claim that

\begin{enumerate}[resume*]
\item \label{case2_3} we can choose $\mathcal T_H$ in a way that $e' \in T_H^1 \cup T_H^2$.
\end{enumerate}

If $a$ and $e'$ lie in the same component of $H - \set{u,v}$ we may assume that $a'$ has been an inner point of $e'$ in the first place and thus $e'\in T_H^1 \cup T_H^2$ holds.

So assume that $a$ and $e'$ lie in different components. In this case we can choose a vertex $c$ in the component in which $e'$ lies. Since there is a $c$-$b'$-bypass of $v$ in $H - u$ for every inner point $b'$ of $e'$ we can apply the induction hypothesis and \ref{case2_2} to find a spanning tree packing with the desired properties.

This proves \ref{case2_3} and thus we can assume that the edge $b''w$ of which $b$ is an inner point did not result from pinching an edge in $T_H^i$ for $i>2$. 

By \ref{case2_1} we can find a spanning tree packing $\mathcal T = \set{T^1, T^2, \ldots , T^k}$ of $G - u$ such that $T^1 \cup T^2$ contains $A'$ and the edges $a''w$ and $b''w$ whose union contains an $a$-$b$-bypass of $v$ which completes the proof of \ref{case23}.

In \ref{case24} once again apply the induction hypothesis to find a spanning tree packing $\mathcal T_H = \set{T_H^1,T_H^2, \ldots , T_H^k}$ of $H - u$ such that $T_H^1 \cup T_H^2$ contains an $a$-$a'$-bypass $A$ of $v$ where $a'$ is a vertex in $V'$ or an inner point of an edge in $E'$. As in the previous two cases define $a''$ and an $a$-$a''$-subarc $A'$ of $A$.

Now we can apply \ref{case2_1} to find a spanning tree packing $\mathcal T = \set{T^1, T^2, \ldots , T^k}$ of $G - u$ such that $T^1 \cup T^2$ contains $A'$ and the edges $a''w$ and $bw$ which proves \cref{star} in \ref{case24}.

In \ref{case25} apply the induction hypothesis to find $k$ edge disjoint spanning trees of $H - u$. If the edges $a'w$ and $b'w$ on which $a$ and $b$ lie were created by pinching some edges in $E'$ we may permute the trees such that these edges lie in $T^1 \cup T^2$ or in none of the trees at all. We then apply \ref{case2_1} to find a spanning tree packing $\mathcal T = \set{T^1, T^2, \ldots , T^k}$ of $G - u$ such that $T^1 \cup T^2$ contains both $a'w$ and $b'w$. This proves that \cref{star} holds in \ref{case25}. 

Since there are no more cases left it also completes the induction step and thus the proof of \Cref{treecut2:lemma}.
\end{proof}

\section{Results for Locally Finite Graphs}
\label{infinite}

In the following sections we will prove \Cref{onetreecountable,twotreescountable}. First let us recall the statements and give a brief outline of the proofs.

\newtheorem*{restate:onetreecountable}{\Cref{onetreecountable}}
\begin{restate:onetreecountable}
Let $G$ be a $2k$-edge connected locally finite graph with at most countably many ends. Then $G$ admits an end faithful spanning tree packing of cardinality $k-1$.
\end{restate:onetreecountable}

\newtheorem*{restate:twotreescountable}{\Cref{twotreescountable}}
\begin{restate:twotreescountable}
Let $G$ be a $2k$-edge connected locally finite graph with at most countably many ends Then $G$ admits a topological spanning tree packing $\mathcal T$ of cardinality $k$ such that $T_1 \cup T_2$ is an end faithful connected spanning subgraph of $G$ whenever $T_1, T_2 \in \mathcal T$ and $T_1 \neq T_2$.
\end{restate:twotreescountable}

The graphs in the condition of the two theorems may be infinite in two ways: they are infinite in depth, i.e., there are rays and thus also ends, and they are infinite in width, i.e., there are infinitely many ends. In the proofs we will tackle these two types of infiniteness one by one. First we show the statements for one ended graphs by decomposing them into (countably many) finite graphs. Afterwards---using the results from the first part---we decompose graphs with countably many ends into finite and one ended graphs. Now let us give a brief summary of the proof steps.

As mentioned above we first consider the case where $G$ has only one end. We choose a strictly increasing sequence $G_n$ of finite contractions converging to $G$. The decomposition mentioned above consists of the graphs $G_n - G_{n-1}$ which are clearly finite. We inductively construct a sequence of spanning tree packings $\mathcal T_n$ of $G_n$ by choosing a spanning tree packing of $G_n - G_{n-1}$ (here \Cref{treecut2} is essential) and combining this packing with the spanning tree packing $\mathcal T_{n-1}$.  Taking the limit of the sequence $\mathcal T_n$ and showing that it is an end faithful spanning tree packing of $G$ finishes the proof in the one ended case.

If $G$ has countably many ends we also select an increasing sequence $G_n$ of contractions converging to $G$. This time however all of the contractions are infinite and $G_{n+1}$ has exactly one end more than $G_n$, more precisely $G_{n+1}-G_n$ consists of a one ended and possibly a finite component.  We apply the first part of the proof to inductively construct a sequence $\mathcal T_n$ of end faithful spanning tree packings of $G_n$. Like in the one ended case the limit of the sequence $\mathcal T_n$ is the desired end faithful spanning tree packing.

Clearly the crux in both parts of the proof is finding conditions under which the sequences $\mathcal T_n$ converge to an end faithful spanning tree packing and constructing the $\mathcal T_n$ accordingly. So in each of the two proof steps we establish such a condition before the actual proof. 

In the second part of the proof it will also be important to choose the contractions in a way that we can apply the result of the first part. Hence before moving to the proofs of \Cref{onetreecountable,twotreescountable} we show a result concerning cuts which is then used to define the contractions.

\medskip

Now let us turn to the one ended case. We will prove the following statements for one ended graphs which are slightly stronger than \Cref{onetreecountable,twotreescountable}. The stronger assertions are of vital importance for using the results in the proofs for graphs with countably many ends.

\begin{theo}
\label{onetree}
Let $G = (V,E)$ be a $2k$-edge connected locally finite graph with only one end $\omega$ and let $V' \subseteq V$ such that the set of edges connecting $V'$ to $V \setminus V'$ form a $\omega$-$V'$-cut of minimal cardinality. Then $G- V'$ admits an end faithful spanning tree packing of cardinality $k-1$.
\end{theo}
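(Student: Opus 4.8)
The plan is to realize $G$ as a limit of finite graphs and to build the spanning tree packing of $G-V'$ as a limit of finite packings, at each stage invoking \Cref{treecut2} to maintain the connectivity control that guarantees the limit is end faithful. Concretely, since $G$ is one-ended and locally finite I would fix an exhausting sequence $(V_n)_{n\in\mathbb N}$ of finite vertex sets with $V'\subseteq V_0$, chosen so that $G-V_n$ is connected for every $n$ (possible because $G$ has a single end). Contracting the unique infinite component $C_n$ of $G-V_n$ to a single vertex $u_n$ yields a finite $2k$-edge connected graph $G_n$, and by construction $G_{n-1}$ is a further contraction of $G_n$. The key preparatory observation is that the edges joining $V_n$ to $C_n$ form an $\omega$-$V'$-cut, so after contraction the edge set $E_{u_n}$ incident to $u_n$ is a $u_n$-$v$-cut of minimal cardinality for an appropriate reference vertex $v\in V'$; this is exactly the hypothesis needed to apply \Cref{treecut2} with $u=u_n$.

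Next I would construct the packings inductively. Having a spanning tree packing $\mathcal T_{n-1}=\{T^1_{n-1},\dots,T^{k-1}_{n-1}\}$ of $G_{n-1}-V'$ (or the relevant contraction), I apply \Cref{treecut2}, or more precisely \Cref{treecut3}, inside the finite graph $G_n$ to obtain a packing of cardinality $k-1$ of $G_n-u_n$ in which two of the trees keep a prescribed pair of vertices $a_n,b_n$ connected after deleting the contracted vertex. The purpose of tracking this bypass is end faithfulness: by choosing $a_n,b_n$ to be the contracted images of the two ``sides'' that the end $\omega$ is approached through, the bypass condition forces each tree's ray toward $\omega$ to remain a single ray in the limit rather than splitting into distinct rays. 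Using \Cref{contract2} I then splice the new finite packing on $G_n-G_{n-1}$ together with $\mathcal T_{n-1}$ so that $T^i_n\vert_{G_{n-1}}=T^i_{n-1}$ for every $i$, ensuring the restrictions are nested and the edge sets increase.

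Finally I would pass to the limit. Setting $T^i:=\bigl(\lim_{n\to\infty}T^i_n\bigr)\cup\Omega(G)$ for each $i$, \Cref{limtree} shows that each $T^i$ is a topological spanning tree of $G-V'$, and by \Cref{treetoptree} its restriction to the graph is an end faithful spanning tree. The packings $T^i$ are pairwise edge disjoint because edge disjointness is preserved at every finite stage and the edge sets are increasing. This produces the desired end faithful spanning tree packing of cardinality $k-1$.

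I expect the main obstacle to be the end-faithfulness bookkeeping in the inductive step: one must choose the vertices $a_n,b_n$ (and ensure that the trees realizing the bypass in \Cref{treecut2} are compatible with the already-fixed restrictions $T^i_{n-1}$) so that no two $\omega$-rays belonging to the same tree drift into distinct ends of the tree. Since $G$ has only one end there is a single end to control, which simplifies matters, but coordinating the bypass condition with the nesting requirement $T^i_n\vert_{G_{n-1}}=T^i_{n-1}$ across all $k-1$ trees simultaneously, while only \Cref{treecut2} guarantees the bypass for a \emph{pair} of trees, is the delicate point that will require a careful argument.
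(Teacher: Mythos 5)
Your high-level plan (exhaust $G$ by finite contractions, build nested finite packings using \Cref{treecut2}/\Cref{treecut3} together with \Cref{contract2}, pass to the limit) is the same as the paper's, but the way you orient \Cref{treecut2} is backwards, and this breaks the inductive step. You contract the infinite component of $G-V_n$ to $u_n$ and want to apply \Cref{treecut2} with $u=u_n$ and a reference vertex $v\in V'$. First, the hypothesis is then not satisfiable in general: $E_{u_n}$ is the set of edges leaving $V_n$, and while one can arrange this to be a cut of minimal cardinality separating the whole set $V_n$ from $\omega$, \Cref{treecut2} demands that it be a minimal $u_n$-$v$-cut, i.e.\ of cardinality $\kappa'_{G_n}(u_n,v)$; since $\kappa'_{G_n}(u_n,v)\le\deg_G(v)$ is bounded while the boundaries of an exhausting sequence need not be, equality fails for large $n$ in general --- your inference ``the edges joining $V_n$ to $C_n$ form an $\omega$-$V'$-cut, so \dots\ of minimal cardinality'' is a non sequitur. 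Second, the packing produced would be a packing of $G_n-u_n=G[V_n]$, whose trees span $V'$, whereas the theorem needs trees of $G-V'$. Third, such a packing lives on all of $V_n$ and therefore cannot satisfy the nesting $T_n^i\vert_{G_{n-1}}=T_{n-1}^i$; to splice with $\mathcal T_{n-1}$ via \Cref{contract2} you need a packing of the annulus graph $G\cont{V_n\setminus V_{n-1}}$ minus the vertex obtained by contracting $V_{n-1}$. The workable orientation is the opposite one: $u$ is the contracted \emph{past} (deleting $u$ is what removes $V'$ and frees the annulus for splicing) and $v=v_n$ is the contracted \emph{future}; minimality of $E_u$ then holds because the boundary of $V_{n-1}$ is chosen as a minimal cut separating $V_{n-1}$ from $\omega$, and any smaller $u$-$v_n$-cut would also be such a cut. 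Moreover the bypass must avoid the contracted future vertex --- that is what forces the connection between $a_n$ and $b_n$ to be realized inside the current finite stage rather than deferred through the contracted vertex; a bypass avoiding some $v\in V'$ gives no control whatsoever.

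Your limit argument is also invalid as stated. \Cref{limtree} gives that each $T^i$ is a topological spanning tree, but the intersection of a topological spanning tree with $G$ need not be connected (it can consist of several components joined only through the end), and \Cref{treetoptree} can only be applied once one already knows that $T^i\cap G$ is a spanning tree --- which is exactly the assertion to be proved. This missing step is the heart of the paper's proof: one enumerates pairs of vertices together with tree indices by a map with infinite fibres, and at stage $n$, if the designated pair is joined in $T_{n-1}^i$ only through the contracted vertex, one uses \Cref{treecut3} (not \Cref{treecut2}: for cardinality $k-1$ the bypass must lie in a \emph{single} tree, since each individual tree has to end up connected and end faithful; a bypass in the union of two trees only helps for \Cref{twotrees}, and your phrase ``two of the trees keep $a_n,b_n$ connected'' conflates the two situations) to choose the annulus packing so that $T_n^i$ itself contains an $a_n$-$b_n$-bypass of $v_n$. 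Then every potential gap terminates, there are no persistent gaps, and \Cref{noinfgaps} shows that each limit tree is an end faithful spanning tree of $G-V'$. You gesture at this control and rightly flag it as the delicate point, but the bookkeeping that makes it work --- gaps, their termination, and the enumeration guaranteeing that every potential gap is eventually treated --- is precisely what is absent, and the two lemmas you cite cannot replace it.
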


\begin{theo}
\label{twotrees}
Let $G = (V,E)$ be a $2k$-edge connected locally finite graph with only one end $\omega$ and let $V' \subseteq V$ such that the set of edges connecting $V'$ to $V \setminus V'$ form a $\omega$-$V'$-cut of minimal cardinality. Then $G- V'$ admits a topological spanning tree packing $\mathcal T$ of cardinality $k$ such that $T_1 \cup T_2$ is an end faithful connected spanning subgraph of $G$ whenever $T_1, T_2 \in \mathcal T$ and $T_1 \neq T_2$.
\end{theo}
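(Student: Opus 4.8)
The plan is to realise the required packing as a limit of finite packings living on an exhausting sequence of finite contractions of $G$, exactly along the lines sketched for the one ended case, with \Cref{treecut2} supplying the packing on each ``annulus''. First I would fix an increasing sequence of finite vertex sets $V' \subseteq V_0 \subseteq V_1 \subseteq \cdots$ with $\bigcup_n V_n = V$, each $G[V_n]$ connected and each $V \setminus V_n$ connected (possible since $G$ is locally finite and one ended). Writing $G_n := G\cont V_n$ for the minor induced by $V_n$ and $x_n$ for the single vertex obtained from the infinite remainder, I would choose the $V_n$ along nested minimal cuts so that the cut separating the far part from the central part is minimal at every level. The hypothesis that the edges joining $V'$ to $V \setminus V'$ form a minimal $\omega$-$V'$-cut provides the innermost such cut and lets the induction start; minimal cuts are preserved under contraction, so the outer boundary of each annulus remains minimal. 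This is precisely the ``result concerning cuts'' the introduction promises to prove first. Each $G_n$ is finite and, being a contraction of $G$, is $2k$-edge connected.

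Next I would build the packings inductively in $G - V'$. Contracting the part of the annulus $G_n - G_{n-1}$ facing the end to a vertex $u$ and the central part to a vertex $v$ turns it into a finite $2k$-edge connected graph in which $E_u$ is a minimal $u$-$v$-cut, so \Cref{treecut2} yields a spanning tree packing of cardinality $k$ of this annulus together with a distinguished pair of trees whose union contains an $a$-$b$-bypass of $v$. Choosing $a$ and $b$ on the outer frontier (the side facing $x_n$) makes this a connection of two outward crossings that avoids the central part, i.e.\ a reconnection \emph{beyond} level $n$. I would then glue this annulus packing to $\mathcal T_{n-1}$ via \Cref{contract2} to obtain $\mathcal T_n = \set{T_n^1,\ldots,T_n^k}$ with $T_{n+1}^i \vert_{G_n} = T_n^i$, keeping careful track of which tree owns which crossing edge so that the restrictions are consistent. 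Crucially, as $n$ ranges over $\mathbb N$ I would rotate the distinguished pair through all $\binom k2$ pairs of indices, so that every pair of trees is cross connected on the far side of cofinally many annuli.

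The limits $T^i := \lim_{n} T_n^i \cup \Omega(G)$ then exist and, by \Cref{limtree}, are topological spanning trees of $G - V'$, giving a topological spanning tree packing of cardinality $k$. It remains to check that $T^i \cup T^j$ is an end faithful connected spanning subgraph for every $i \neq j$; since $G - V'$ carries the single end $\omega$, this amounts to the union being graph connected, spanning and one ended. Connectedness and spanning come from the fact that the two trees jointly span every annulus, and one endedness follows because the bypasses of $v$ survive into the limit and force the outward crossings of $T^i$ and $T^j$ to reconnect past $V_n$ for cofinally many $n$, so that any two $\omega$-rays of $T^i \cup T^j$ eventually merge and hence have a common tail. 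This is the point where I would use the characterisation of end faithfulness recorded just before \Cref{treetoptree}, and where a preliminary convergence lemma should be isolated and proved first, stating precisely that ``the crossing edges of two trees reconnecting beyond every level'' forces the limit union to be end faithful.

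The main obstacle is securing the single end property for \emph{all} pairs simultaneously while keeping the construction coherent. Since \Cref{treecut2} links only one distinguished pair per annulus, I must argue that rotating through the $\binom k2$ pairs and reconnecting each at cofinally many levels suffices; the key point is monotonicity---once two trees have been linked beyond some level they stay linked, so merges accumulate and cannot be undone---together with the care needed to ensure that deleting $v$ (the central contraction) really controls the far side, and hence the ends, rather than merely yielding graph connectivity. Subsidiary difficulties are arranging the exhaustion so that every annulus inherits a minimal cut, so that \Cref{treecut2} applies at every step, and the bookkeeping of crossing edges that makes the consistency $T_{n+1}^i\vert_{G_n} = T_n^i$ hold verbatim under the gluing of \Cref{contract2}.
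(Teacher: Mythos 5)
Your global architecture---an exhaustion along nested minimal cuts, \Cref{treecut2} applied to each annulus, gluing via \Cref{contract2}, passage to the limit via \Cref{limtree}, plus a convergence criterion for end faithfulness---is exactly the paper's, but you have the application of \Cref{treecut2} oriented the wrong way round, and this breaks the induction. In \Cref{treecut2} the vertex $u$ is \emph{deleted}; the paper takes $u_n$ to be the contracted \emph{central} part $V_{n-1}$ (at the first step the contracted $V'$ itself, which is precisely how the hypothesis on $V'$ enters and why the packing ends up living on $G-V'$) and $v_n$ the contracted far part, so that \Cref{treecut2} returns a packing of the annulus \emph{plus the new far vertex} $v_n$, with a bypass avoiding $v_n$. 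That is exactly the input \Cref{contract2} needs to extend $\mathcal T_{n-1}$: the crossing edges between $V_{n-1}$ and the annulus are inherited from $T_{n-1}^i$ alone, and the far vertex of $G\cont{V_n}$ is spanned. In your orientation ($u$ = far part, $v$ = central part) the annulus packing spans the central vertex but not the far one, so the glued graphs do not span $G\cont{V_n}$; worse, its trees use edges incident to the contracted $V_{n-1}$, which are already prescribed by $T_{n-1}^i$, so the consistency $T_{n+1}^i\vert_{G_n}=T_n^i$ cannot be maintained; and at the first step $V'$ would be contracted rather than deleted. The minimal-cut hypothesis you need is also the wrong one: with $u$ the far vertex, $E_u$ minimal means the \emph{outer} boundary is a minimal cut to $V_{n-1}$, which a minimal $\omega$-$W_n$-cut need not be; in the paper's orientation, any cut separating $V_{n-1}$ from the far side also separates $W_{n-1}$ from $\omega$, so minimality of the \emph{inner} boundary is automatic from the construction.

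Second, your bridging mechanism is too loose to deliver end faithfulness. The points $a,b$ cannot be chosen freely ``on the outer frontier'': they must be the two vertices at which the path of $T_{n-1}^i$ joining a \emph{specific} separated pair $w_{n_1},w_{n_2}$ through the old contracted vertex enters the annulus (so they lie on the inner side, among the neighbours of $V_{n-1}$), and the bypass must avoid the far vertex, so that the concatenation is a path in $G$ using no contracted vertex. Moreover, rotating only through the $\binom k2$ index pairs does not suffice, and your monotonicity heuristic (``once linked, stays linked'') misses the actual difficulty: to rule out that two rays of $T^i\cup T^j$ lie in different ends one needs, for every pair of trees \emph{and every pair of vertices}, arbitrarily large families of \emph{disjoint} bridges, since finitely many bridges could all pass through a fixed finite vertex set and be useless beyond it. The paper handles this with a function $\varphi\colon\mathbb N\to\mathbb N^2\times\set{(i,j)\mid 1\leq i,j\leq k,\ i\neq j}$ hitting every value infinitely often, and the formal criterion (\Cref{bridges}, stated via persistent gaps bridged infinitely often) is the ``preliminary convergence lemma'' you correctly anticipate but do not supply.
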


As mentioned earlier, before proving \Cref{twotrees,onetree}  we need a condition which ensures that the limit of a sequence of spanning trees of finite contractions is an end faithful spanning tree. Since the limit is a topological spanning tree by \Cref{limtree}---note that in the finite case the notions of spanning trees and topological spanning trees coincide---we only need to ensure that it is connected. 

Throughout the construction process we will take a look at potential pairs of components which will be formalised by the notion of gaps. 

In the proof of \Cref{onetree} we would like to avoid that such a pair of potential components can propagate throughout the construction process because in this case we get at least two components in the limit.

In \Cref{twotrees} the situation is slightly different. We allow the topological spanning trees to have more than one component. However, we require that the union with any other topological spanning tree in the limit is connected and end faithful. This will be achieved by bridging gaps, i.e., by constructing in each step paths in the union of two trees which connect the two potential components involved in a gap.

The following definition puts the intuitive notions above in a more formal context.

\begin{defi}
Let $G = (V,E)$ be a locally finite graph and $(V_n)_{n \in \mathbb N}$ an exhausting sequence of subsets of $V$. Define $G_n = G \cont {V_n}$, i.e., $G_n$ is the graph obtained from $G$ by contracting each component of $G - V_n$ to a single vertex.

\begin{itemize}[label=--]
\item Given a spanning tree $T_n$ of $G_n$ we call a pair of components of $T_n[V_n]$ a \emph{gap} of $T_n$ in $G_n$. If $(C_1, C_2)$ is a gap and $u  \in C_1$ and $v \in C_2$ we say that the gap \emph{separates} $u$ and $v$.

\item Assume that for every $k \in \mathbb N$ we have a spanning tree $T_k$ of $G_k$ and that $T_{k+1} \vert_{G_{k}} = T_{k}$. Let $m < n$ be natural numbers.

A gap $(C_1,C_2)$ of $T_n$ \emph{extends} a gap $(C_1', C_2')$ of $T_m$ if the vertex set of $C_i'$ is a subset of the vertex set of $C_i$ for $i \in \set{1,2}$. Note that a gap $(C_1', C_2')$ of $T_m$ cannot be extended by multiple gaps in $T_n$. Hence we will not distinguish between a gap and its extension. This allows to talk about gaps in the sequence $T_k$.

A gap $(C_1, C_2)$ of $T_m$ \emph{terminates} in $V_n$ if it is not extended by a gap of $T_n$. Note that in this case $C_1$ and $C_2$ both are contained in the same component of $T_n[V_n]$ and thus connected by a path in $T_n$ that does not use any contracted vertex. A gap that does not terminate is called a \emph{persistent gap} in the sequence $T_k$.

\item Given a gap $\Gamma = (C_1,C_2)$ of $T_n$ we say that a tree $T_n' \subseteq G_n$ \emph{bridges} the gap $\Gamma$ in $V_n$ if there is a path $P$ from $C_1$ to $C_2$ in $T_n \cup T_n'$ which does not use any contracted vertex. The set of edges in $P \cap T_n'$ is called a \emph{$\Gamma$-bridge} in $T_n'$.

A sequence $T_n'$ bridges a gap $\Gamma$ in a sequence $T_n$ infinitely often if there are arbitrarily large sets of disjoint $\Gamma$-bridges in $T_n'$ as $n \to \infty$. Note that this can only be the case if $\Gamma$ is a persistent gap.
\end{itemize}
\end{defi}

The following results verify the intuition that in order to obtain end faithful connected spanning subgraphs we only have to avoid persistent gaps. Note that they do not only hold for one-ended graphs but for arbitrary locally finite graphs, even those with uncountably many ends.
\begin{lem}
\label{bridges}
Let $G = (V,E)$ be a locally finite graph and $(V_n)_{n \in \mathbb N}$ an exhausting sequence of finite subsets of $V$.
Furthermore let $T_n, T_n'$ be sequences of spanning trees of $G \cont {V_n}$ such that $T_n \vert_{V_{n-1}} = T_{n-1}$ and $T_n' \vert_{V_{n-1}} = T_{n-1}'$. If the sequence $T_n'$ bridges every persistent gap of the sequence $T_n$ infinitely often then $H := T \cup T'$ is an end faithful connected spanning subgraph of $G$ where  $T = \lim _{n \rightarrow \infty} T_n$ and $T' = \lim _{n \rightarrow \infty} T_n'$.
\end{lem}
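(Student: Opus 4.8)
The plan is to verify the three assertions of the conclusion --- that $H = T \cup T'$ is spanning, connected, and end faithful --- in that order, exploiting that each $G_n := G\cont{V_n}$ is finite and that the increments happen on the finite windows $V_n$. That $H$ is spanning is immediate: the limits $T$ and $T'$ exist by the monotonicity argument already used in \Cref{limtree}, and since each $T_n$ spans $V_n$ while $(V_n)$ is exhausting, every vertex of $G$ lies in $T \subseteq H$. The substantive content is connectedness together with end faithfulness, and I would first reduce both to a statement about gaps. Observe that $H$ fails to be connected, or fails to be end faithful, exactly when some pair of vertices (resp. some pair of $\omega$-rays) is separated forever; the whole point of the gap formalism is that any such obstruction is witnessed, at each finite stage $n$, by a gap of $T_n$ in $G_n$, namely a pair of distinct components of $T_n[V_n]$ on either side of the separation. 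So the strategy is to show that \emph{no persistent gap can obstruct $H$}, using the bridging hypothesis.

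The key step is the following. Let $(C_1, C_2)$ be any persistent gap of the sequence $T_n$, and fix $u \in C_1$, $v \in C_2$. By hypothesis $T_n'$ bridges this gap infinitely often, so for arbitrarily large $n$ there is a path $P_n$ from $C_1$ to $C_2$ inside $T_n \cup T_n'$ that uses no contracted vertex of $G_n$; such a path lies entirely within $G[V_n] \subseteq H$. I would then run Georgakopoulos' limit-of-paths argument (exactly as invoked in the proof of \Cref{limtree}, and valid also for ends) on the sequence of topological $u$-$v$-paths obtained by concatenating a $u$-$C_1$ path and a $C_2$-$v$ path inside the trees with the bridge $P_n$: arranging each such path to agree with $H$ on $G[V_n]$ forces $\liminf$ to contain a topological $u$-$v$-arc inside $H$. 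Hence $u$ and $v$ are joined in $H$, i.e. the persistent gap does not disconnect $H$. Running this for every pair of vertices and, via the star-comb lemma (\Cref{starcomb}) together with the infinitely-many disjoint $\Gamma$-bridges, for every pair of $\omega$-rays in the same end of $G$, yields that $H$ is connected and that rays of $H$ in one end of $G$ share a tail, which is end faithfulness by the characterisation recorded after \Cref{starcomb}.

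It remains to treat the gaps that are \emph{not} persistent, and this is where I expect the main care --- though not the main difficulty --- to lie. A gap $(C_1, C_2)$ of $T_m$ that terminates in some $V_n$ places $C_1$ and $C_2$ in a common component of $T_n[V_n]$, hence joined by a path in $T_n \subseteq H$ using no contracted vertex; so terminating gaps are harmless automatically, with no appeal to $T'$. The real obstacle is bookkeeping: I must argue that every potential separation of two points (or two same-end rays) of $H$ does arise as (an extension of) a gap in the sense defined, so that the dichotomy persistent-versus-terminating is genuinely exhaustive. This uses the stated fact that a gap of $T_m$ extends to at most one gap of $T_n$, so that "gaps in the sequence" are well defined, and the observation that two $\omega$-rays in $H$ separated in $H$ must, at every sufficiently large stage, fall into distinct components of $T_n[V_n]$ and thus constitute a persistent gap. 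Once this exhaustiveness is pinned down, the two cases above --- terminating gaps closed within the trees, persistent gaps closed by infinitely many bridges via the $\liminf$ argument --- cover all obstructions, and the three properties follow.
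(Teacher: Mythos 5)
Your reduction to gaps, your handling of terminating gaps, and your observation that a $\Gamma$-bridge is a path in $T_n\cup T_n'$ avoiding contracted vertices and hence lies in $G[V_n]\cap H$ are all sound, and they match the skeleton of the paper's argument. But there are two genuine gaps. First, for connectedness you feed the concatenated paths into Georgakopoulos' $\liminf$ argument and conclude from "a topological $u$-$v$-arc" that "$u$ and $v$ are joined in $H$". The $\liminf$ of topological paths lives in $\overline G$ and will in general pass through ends, so what it yields is arc-connectedness of the closure $\overline H$, not connectedness of the graph $H$; the paper explicitly reserves the word connected for the graph-theoretic notion, and the two differ (two disjoint rays to a common end are topologically connected but form a disconnected graph). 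The detour is also unnecessary: each concatenation of a path inside one gap component, a single $\Gamma$-bridge, and a path inside the other component is already a \emph{finite} $u$-$v$-path in $H$, so one bridge per persistent gap settles connectedness outright, with no limit argument at all. As written, though, your inference lands in the wrong category.

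Second, and more seriously, end faithfulness is asserted rather than proved. You claim the argument shows that "rays of $H$ in one end of $G$ share a tail", citing the characterisation recorded after \Cref{starcomb}; but that characterisation applies to spanning \emph{trees} only, and for $H=T\cup T'$ it is simply false: $T$ and $T'$ each contain a ray to any given end of $G$, and edge-disjoint rays cannot share a tail. What must be shown is that two rays of $H$ in the same end of $G$ lie in the same end of $H$, and the paper's mechanism for this---the only place where "bridged \emph{infinitely often}", as opposed to "bridged once", is actually used---is a counting argument that is absent from your proposal. Concretely: if $\gamma_1,\gamma_2$ lay in different ends of $H$, pick $n$ with these ends in different components of $H-V_n$ and vertices $v_1,v_2$ of the rays beyond $V_n$; then $v_1,v_2$ lie in different components of $T$ (else the $T$-path joining them would restrict to a cycle in the tree $T_n$), so a persistent gap separates them; its arbitrarily large families of pairwise disjoint bridges give arbitrarily many $v_1$-$v_2$-paths in $H$ sharing no $T'$-edges; any such path meeting $V_n$ must use a $T'$-edge with an endpoint in $V_n$ (otherwise its restriction to $G\cont{V_n}$ would be a cycle in $T_n$), and there are only finitely many edges incident to $V_n$, so some path avoids $V_n$ entirely---contradicting that $v_1,v_2$ lie in different components of $H-V_n$. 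Without this step, or a substitute for it, the hypothesis of infinitely many disjoint bridges is never exploited and your proof of end faithfulness does not go through.
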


\begin{proof}
The graph $H$ is connected because any pair of components of $H$ would constitute a persistent gap in the sequence $T_n$ that is not bridged. 

So we only need to show that any two rays $\gamma_1$ and $\gamma_2$ in $H$ belonging to the same end $\omega$ of $G$ are equivalent in $H$. Assume that $\gamma_i$ belongs to an end $\omega_i$ of $H$ for $i \in \set{1,2}$ and that $\omega_1 \neq \omega_2$. 

Let $n$ be big enough that $\omega_1$ and $\omega_2$ lie in different components $C_H^1$ and $C_H^2$ of $H - V_n$. Note that $C_H^1$ and $C_H^2$ both are subsets of the same component $C_G$ of $G- V_n$ because $\gamma_1$ and $\gamma_2$ converge to the same end of $G$. For $i \in \set{1,2}$ denote by $v_i \in C_H^i$ a vertex of $\gamma_i$ such that all consecutive vertices lie in $C_H^i$ as well.

If $v_1$ and $v_2$ belonged to the same component of $T$ then the unique path in $T$ connecting $v_1$ and $v_2$ would use vertices in $V_n$. Hence this path would correspond to a cycle in $T_n$. So $v_1$ and $v_2$ belong to different components of $T$ and there is a persistent gap $\Gamma$ that separates $v_1$ and $v_2$.

We know that $\Gamma$ is bridged infinitely often, i.e., there are arbitrarily large sets $\mathcal P$ of paths in $H$ connecting $v_1$ and $v_2$ such that \[
\forall P_1,P_2 \in \mathcal P \colon P_1 \cap P_2 \cap E(T') = \emptyset.
\] 
If a path $P \in \mathcal P$ has non-empty intersection with $V_n$ it has to use at least one edge of $T'$ with one endpoint in $V_n$ because otherwise $P \mid_{G_n}$ would be a cycle in $T_n$. Hence the number of such paths is bounded by the number of edges with an endpoint in $V_n$. So if $\vert \mathcal P \vert$ is large enough then there is a path $P^* \in \mathcal P$ that connects $v_1$ and $v_2$ such that $P^*$ contains no vertex of $V_n$.

Consequently $v_1$ and $v_2$ lie in the same component of $H - V_n$, a contradiction.
\end{proof}

\begin{lem}
\label{noinfgaps}
Let $G = (V,E)$ be a locally finite graph and $(V_n)_{n \in \mathbb N}$ an exhausting sequence of finite subsets of $V$.
Furthermore let $T_n$ be a sequence of spanning trees of $G \cont {V_n}$ such that $T_n \vert_{V_{n-1}} = T_{n-1}$. If the sequence $T_n$ contains no persistent gaps then $T := \lim _{n \rightarrow \infty} T_n$
is an end faithful spanning tree of $G$.
\end{lem}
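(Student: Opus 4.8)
The plan is to derive Lemma \ref{noinfgaps} as a clean special case of the already-proved Lemma \ref{bridges}, rather than repeating the limit argument from scratch. The key observation is that if the sequence $T_n$ has no persistent gaps, then taking $T_n' = T_n$ means $T_n'$ trivially bridges every persistent gap infinitely often (there are none). Applying Lemma \ref{bridges} with this choice gives that $H := T \cup T' = T \cup T = T$ is an end faithful connected spanning subgraph of $G$. Since each $T_n$ is a spanning tree and the restriction maps are compatible, \Cref{limtree} tells us $T$ is a topological spanning tree, hence by \Cref{treetoptree} an end faithful spanning tree; but we still owe a direct argument that $T$ is a genuine spanning \emph{tree} (i.e.\ contains no finite cycle and is connected), which is what the lemma claims.

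Concretely, first I would note that $T$ is connected by the same argument as in Lemma \ref{bridges}: any pair of components of $T$ would yield a pair of components that are never joined, which is precisely a persistent gap, contradicting the hypothesis. This uses that $(V_n)$ is exhausting, so every vertex eventually lies in some $V_n$, and that $T_n \vert_{V_{n-1}} = T_{n-1}$ guarantees the limit is well defined. Second, I would verify that $T$ contains no (finite) cycle: if $C$ were a cycle in $T$, then $C$ uses only finitely many edges and vertices, so $C \subseteq T_n[V_n]$ for all sufficiently large $n$, making $C$ a cycle in the tree $T_n$, a contradiction. Thus $T$ is a spanning tree in the graph-theoretic sense.

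Third, for end faithfulness I would invoke the end-faithful connectedness conclusion of Lemma \ref{bridges} with $T' = T$: the union $T \cup T = T$ is end faithful, meaning any two rays of $T$ lying in the same end of $G$ lie in the same end of $T$. Combined with the fact that $T$ is a connected spanning subgraph that must contain an $\omega$-ray for every end $\omega$ of $G$ (again because otherwise some component at infinity would persist as an unbridged persistent gap), this establishes that $T$ is an end faithful spanning tree. The absence of persistent gaps is exactly the hypothesis that feeds Lemma \ref{bridges}.

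The only mild obstacle is bookkeeping about whether the degenerate choice $T_n' = T_n$ is legitimate in the statement of Lemma \ref{bridges}: one must check that the hypotheses ($T_n'$ a sequence of spanning trees with $T_n' \vert_{V_{n-1}} = T_{n-1}'$, and the compatibility $T_n \vert_{V_{n-1}} = T_{n-1}$) are all satisfied when the two sequences coincide, and that ``bridges every persistent gap infinitely often'' is vacuously true when there are no persistent gaps. Both checks are immediate, so the main work is simply citing Lemma \ref{bridges} correctly and then upgrading its ``connected spanning subgraph'' conclusion to the stronger ``spanning tree'' conclusion via the cycle-freeness argument above. I expect the whole proof to be only a few lines.
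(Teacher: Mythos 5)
Your proposal is correct and takes essentially the same route as the paper: the paper's own proof likewise applies \Cref{bridges} with $T_n' = T_n$ (the bridging hypothesis holding vacuously since there are no persistent gaps) to get connectedness and end faithfulness. The only cosmetic difference is acyclicity, which the paper obtains by noting that $T$ is a topological spanning tree via \Cref{limtree} (finite circles being special cases of topological circles) and then invoking \Cref{treetoptree}, whereas you argue directly that any finite cycle would already appear in some $T_n$; both arguments are sound.
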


\begin{proof}
Since $T_n$ contains no persistent gaps it is clear that $T$ is end faithful and connected by \Cref{bridges} with $T_n' = T_n$. We also know that $T$ is a topological spanning tree and thus acyclic---note that finite circles constitute a special case of topological circles. Hence $T$ is both a spanning tree and a topological spanning tree and \Cref{treetoptree} completes the proof
\end{proof}

With these auxiliary results on hand we can finally move on to the proofs of \Cref{onetree,twotrees}. In both of the proofs we will construct a sequence of finite contractions and corresponding spanning tree packings fulfilling the condition of \Cref{bridges,noinfgaps} respectively. This will be done in detail for \Cref{twotrees} while for \Cref{onetree} we will only indicate the modifications to be made.

\begin{proof}[Proof of \Cref{twotrees}]

Denote by $\omega$ the unique end of $G$ and let $V_0 := V'$. It is easy to see that the following construction of an exhausting sequence of sets of vertices is always possible. 

For every $n \in \mathbb N$ choose a finite set $W_n \supseteq V_{n-1}$ of vertices containing all neighbours of $V_{n-1}$ such that $G[W_n \setminus V_{n-1}]$ is connected and that there is only one component of $G - W_n$. Let $S_n$ be a $W_n$-$\omega$-cut of minimal cardinality and define $V_n$ to be the component of $G - S_n$ in which $W_n$ lies. 

Next we construct a sequence $\mathcal T _n = \set{T_n^1,T_n^2, \ldots , T_n^k}$ of spanning tree packings of $G_n := G \cont{V_n}-V_0$ such that 
\begin{enumerate}[label=(\arabic*)]
\item \label{twotrees1} $T_{n+1}^i\mid_{G_n} = T_n^i$ and
\item \label{twotrees2} for $i \neq j$ every persistent gap in $T_n^i$ is bridged infinitely often by $T_n^j$.
\end{enumerate}
Then $T^i = \lim_{n \to \infty} T_n^i$ is a topological spanning tree of $G - V'$ by \Cref{limtree}. Whenever $i \neq j$ the edge sets of $T^i$ and $T^j$ are disjoint because $T_n^i$ and $T_n^j$ are edge disjoint for every $n \in \mathbb N$ and by \Cref{bridges} the graph $T^i \cup T^j$ is an end faithful connected spanning subgraph of $G$.

For the construction of the spanning tree packings note that the graph $G_n' := G \cont{V_n \setminus V_{n-1}}$ is $2k$-edge connected. Let $u_n$ be the contracted vertex in $G_n'$ that corresponds to $V_{n-1}$ and denote by $v_n$ the only other contracted vertex in $G_n'$. Then the set $S_n$ of edges incident to $u_n$ is a $u_n$-$v_n$-cut of minimal cardinality. This implies that we can apply \Cref{treecut2} to each of the $G_n'$.

Start by selecting an arbitrary spanning tree packing $\mathcal T _1 = \set{T_1^1,T_1^2, \ldots , T_1^k}$ of $G_1$. Such a spanning tree packing exists by \Cref{treecut2}.

For $n \geq 2$ we will inductively define $\mathcal T_{n}$ from $\mathcal T_{n-1}$ by selecting a spanning tree packing $\mathcal T_n'$ of $G_n'-u_n$. Consequently \Cref{contract2} can be applied to find a spanning tree packing of $G_{n}$ for which \ref{twotrees1} holds. 

In order to take care of \ref{twotrees2} we will bridge one possible gap in every step of the construction process. For this purpose let $w_1,w_2,w_3,\ldots$ be an enumeration of the vertices of $G$ and choose a function
\[
	\varphi\colon \mathbb N \to \mathbb N^2 \times \set{(i,j) \mid 1 \leq i,j \leq k \text{ and }  i \neq j}
\]
such that $\varphi^{-1}(s)$ is infinite for every $s$. This function will be used to make sure that every persistent gap is bridged infinitely often in the following way: if $\varphi(n) = (n_1,n_2,i,j)$ and there is a gap $\Gamma$ in $T_{n-1}^i$ that separates $w_{n_1}$ and $w_{n_2}$ we construct a $\Gamma$-bridge in $T_{n}^j - V_{n-1}$. So if the gap $\Gamma$ is persistent there will be arbitrarily large sets of disjoint $\Gamma$-bridges since $\varphi^{-1}(n_1,n_2,i,j)$ is infinite.

All that is left to show now is that we can construct such a $\Gamma$-bridge, i.e., that we can choose the spanning tree packing of $G_n' - u_n$ accordingly. For this purpose let $P$ be the unique path in $T_{n-1}^i$ that connects $w_{n_1}$ and $w_{n_2}$. Note that we may assume that $w_{n_1}, w_{n_2} \in V_{n-1}$ and that $P$ uses the unique contracted vertex in $G_{n-1}$. Otherwise no gap $\Gamma$ separates $w_{n_1}$ and $w_{n_2}$ and thus there is nothing to show. 

Denote by $a_n$ and $b_n$ the vertices in $G_n'$ that are incident to edges used by $P$ such that $a_n,b_n \neq u_n$. Both $a_n$ and $b_n$ are contained in $W_n$ and since $G[W_n \setminus V_{n-1}]$ is connected, there is an $a_n$-$b_n$-bypass of $v_n$ in $G_n' - u_n$. So \Cref{treecut2} can be used to find a spanning tree packing $T_n' = \set{T_{n,1}', \ldots, T_{n,k}'}$ of $G_n' - u_n$ such that $T_{n,i}' \cup T_{n,j}'$ contains an $a_n$-$b_n$-bypass $P_n$ of $v_n$.

It is immediate that the union $P^* = P \cup P_n$ is a $w_{n_1}$-$w_{n_2}$-path in $T_n^i \cup T_n^j$ that does not use any contracted vertex. So $P^*$ contains a $\Gamma$-bridge in $T_n^j$. All edges of this bridge are contained in $T_n^j - V_{n-1}$ because $P^* \mid_{G\cont{V_{n-1}}}$ is a path in $T_{n-1}^i$.
\end{proof}

\begin{proof}[Proof Sketch of \Cref{onetree}]
Construct a sequence of spanning tree packings as we did in the proof of \Cref{twotrees}. 

This time however, instead of \Cref{treecut2} we will use \Cref{treecut3} to obtain $a_n$-$b_n$-paths that are  contained in $T_{n}^i$ and consequently the $v_{n_1}$-$v_{n_2}$-path $P^*$ is completely contained in $T_{n}^i$. So if at some point in the construction process there is a gap separating $v_{n_1}$ and $v_{n_2}$ it will eventually terminate. 

Now use \Cref{noinfgaps} to conclude that the limit of the spanning tree packings is an end faithful spanning tree packing of $G$.
\end{proof}

This completes the proof in the one-ended case. In the case of graphs with countably many ends we will again define a sequence $G_n$ of contractions of $G$ converging to $G$ and a sequence of spanning tree packings $\mathcal T_n$ of $G_n$ converging to the desired spanning tree packings. 

Since we would like to use \Cref{onetree,twotrees,treecut2} for the construction of the $\mathcal T_n$ we need to define the contractions in a way that this is possible. To motivate the definition given below consider for a moment the following situation: we have constructed a contraction $G_{n-1}$ and an end faithful spanning tree packing of this contraction. Now we wish to decontract a contracted vertex $v$ and use one of the theorems in the subgraph induced by the corresponding vertex set. Clearly it is essential that the cut formed by the edges incident to the contracted vertex is a cut of minimal cardinality in the minor induced by the set of vertices corresponding to $v$.

We have no sufficient means of controlling the order of the vertices being decontracted during the proof. Hence the above should be true for each contracted vertex at any time. This implies that we need cuts of minimal cardinality which do not interfere with each other.

\begin{defi}
Let $G = (V,E)$ be a graph, let $x \in V \cup \Omega$ and let $Y \subseteq (V \cup \Omega) \setminus \set{x}$. Let $S_y$ be a $x$-$y$-cut for $y \in Y$ and denote by $C_y$ the component of $G \setminus S_y$ in which $y$ lies. The set $\set{S_y \mid y \in Y}$ is said to be \emph{compatible} if for any $y,y' \in Y$ the cut $S_y$ does not contain an edge that connects two points in $C_{y'}$ , otherwise it is said to be \emph{incompatible}. If the set $\set{S_1,S_2}$ is compatible we will call the cuts $S_1$ and $S_2$ compatible.
\end{defi}
 
Clearly a compatible set of cuts of minimal cardinality is what is needed. The following definition can be used to provide a way of constructing such a set.

\begin{defi}
Let $G = (V,E)$ be a graph, $x \in V \cup \Omega$ and let $Y \subseteq (V \cup \Omega) \setminus \set{x}$. Define
\[
\mathfrak C _x ^Y = \set{S \mid S \text{ is a $x$-$y$-cut of minimal cardinality for some }y \in Y}
\]
and denote by $\mathfrak S_x^Y$ the power set of $\mathfrak C_x^Y$, i.e., the elements of $\mathfrak S_x^Y$ are sets of cuts.

Now we define a binary relation $\sqsubset$ on $\mathfrak S_x^Y \times \mathfrak S_x^Y$. We say that $\mathcal S \sqsubset \mathcal U$ if
\begin{enumerate}[label=(D\arabic*)]
\item \label{comp1} $\card{\mathcal S} \leq \card{\mathcal U}$,
\item \label{comp2} $S \subseteq \bigcup_{U \in \mathcal U} U$ for every $S \in \mathcal S$ and
\item \label{comp3} the component of $G \setminus \bigcup_{S \in \mathcal S} S$ in which $x$ lies is exactly the component of $G \setminus \bigcup_{U \in \mathcal U} U$ in which $x$ lies.
\end{enumerate}
\end{defi}

\begin{rem}
Clearly $\mathcal S \sqsubset \mathcal U$ and $\mathcal U \sqsubset \mathcal S$ implies that $\bigcup_{S \in \mathcal S} S = \bigcup_{U \in \mathcal U} U$. It is also an easily observed fact that the relation $\sqsubset$ is transitive and that whenever $U \in \mathfrak C _x ^Y$ and $U \notin \mathcal U$ then $\mathcal S \sqsubset \mathcal U$ implies that $\mathcal S \cup \set{U} \sqsubset \mathcal U \cup \set{U}$.
\end{rem}

Now starting with an arbitrary finite set $\mathcal U$ of minimal cuts we can find a compatible set $\mathcal S$ of minimal cuts which satisfies $\mathcal S \sqsubset \mathcal U$.

\begin{lem}
\label{compatible}
Let $G =(V,E)$ be a locally finite graph, $x \in V \cup \Omega$ and let $Y \subseteq (V \cup \Omega) \setminus \set x$. Let $\mathcal U \in \mathfrak S_x^Y$ be finite. Then there is a compatible set $\mathcal S \in \mathfrak S_x^Y$ such that $\mathcal S \sqsubset \mathcal U$.
\end{lem}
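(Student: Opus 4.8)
The plan is to recast everything in terms of the two \emph{sides} of each cut and then run an uncrossing argument driven by a potential. For a minimal $x$-$y$-cut $S$ one checks that $G\setminus S$ has exactly two sides: the component $C_S$ containing $y$ (the \emph{far side}) and the remainder $A_S$ (which contains $x$, or, when $x$ is an end $\omega$, all rays of $\omega$), with $S$ equal to the edge boundary $\partial A_S=\partial C_S$. For two cuts I use the four regions $P=A_i\cap A_j$, $Q=A_i\setminus A_j$, $R=A_j\setminus A_i$ and $T=(A_i\cup A_j)^c$. A direct inspection of these regions shows that $S_i$ has an edge inside $C_j$ exactly when there is a $Q$-$T$ edge, and symmetrically $S_j$ has an edge inside $C_i$ exactly when there is an $R$-$T$ edge. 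In particular, two cuts whose far sides are disjoint ($T=\emptyset$) are automatically compatible, and a whole family is compatible once its far sides are pairwise disjoint. This is the configuration towards which I will push the family.

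For the potential, observe that $F:=\bigcup_{U\in\mathcal U}U$ is a finite edge set, so $G-F$ has only finitely many components; since every cut $S\subseteq F$ has a far side that is a union of such components, I may set $\Phi(\mathcal S)=\sum_{S\in\mathcal S}c(S)$, where $c(S)$ is the number of components of $G-F$ contained in $C_S$. This is a finite non-negative integer, which is the point that lets the argument survive even when $x$ is an end and the far sides are infinite. As $\set{\mathcal S\in\mathfrak S_x^Y : \mathcal S\sqsubset\mathcal U}$ is non-empty (it contains $\mathcal U$), I choose $\mathcal S$ minimising $\Phi$ and claim it is compatible.

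To prove the claim I assume $\mathcal S$ has an incompatible pair and produce a strictly $\Phi$-smaller family still $\sqsubset\mathcal U$, contradicting minimality. Incompatibility forces $T\neq\emptyset$, and the pair is either nested or incomparable. If, say, $A_i\subseteq A_j$, then $A_j\supseteq\bigcap_{S\neq S_j}A_S$, so deleting $S_j$ keeps the $x$-component of $G-\bigcup\mathcal S$ unchanged while discarding the positive term $c(S_j)$ (the \emph{drop} move). If $A_i,A_j$ are incomparable I \emph{uncross} them using submodularity of the boundary function. When the sinks $y_i,y_j$ lie on the same side, the identity $d(A_i)+d(A_j)=d(A_i\cap A_j)+d(A_i\cup A_j)+2e(Q,R)$ together with minimality of both cuts shows that $\partial(A_i\cap A_j)$ and $\partial(A_i\cup A_j)$ are again minimal $x$-$y'$-cuts for suitable $y'\in Y$; these are nested, so a subsequent drop lowers $\Phi$. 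When the sinks lie on opposite sides ($y_i\in R$, $y_j\in Q$), the complementary identity $d(A_i)+d(A_j)=d(Q)+d(R)+2e(P,T)$ forces $e(P,T)=0$ and shows that $\partial Q,\partial R$ are minimal cuts with \emph{disjoint} far sides, lowering $\Phi$ by $2\card{T}$ directly. In each case one verifies that the new cuts are contained in $F$, that the number of cuts does not increase, and---arguing directly on edge sets---that the $x$-component of $G-\bigcup\mathcal S$ is unchanged, so conditions \ref{comp1}, \ref{comp2} and \ref{comp3} continue to hold.

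I expect the delicate point to be precisely this submodular case analysis: checking that every uncrossing step returns \emph{minimal} cuts whose sinks still lie in $Y$, so that the outcome is genuinely a member of $\mathfrak S_x^Y$, and in particular treating the opposite-sink case with the complementary uncrossing into $\partial Q,\partial R$ rather than the intersection/union one. A secondary source of care is the bookkeeping for the relation $\sqsubset$, above all the fact that it is the $x$-component, and not the set $\bigcap_{S}A_S$ (which may differ from it when $x$ is an end and a far side fails to be connected to the rest), that must be shown invariant under dropping and uncrossing.
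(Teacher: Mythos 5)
Your proposal is correct, and its combinatorial heart coincides with the paper's: an incompatible pair of minimum cuts is uncrossed, and minimality of both cuts kills the offending cross-edges. Your two identities are exactly the paper's cardinality inequalities, and your replacement cuts $\partial(A_i\cap A_j)$, $\partial(A_i\cup A_j)$, $\partial Q$, $\partial R$ are the paper's $S^*$ and $S_1^*$. What genuinely differs is the outer scaffolding. The paper inducts on $\card{\mathcal U}$: it adjoins one cut to an already compatible family, chooses the adjoined cut so as to minimise the number of incompatibilities it creates, and then either the uncrossed family is strictly smaller than $\mathcal U$ (sink in $C_1\cap C_2$; feed back into the induction) or the uncrossing contradicts the extremal choice (opposite sinks). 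You replace this double extremal structure by a single global potential---the number of components of $G-F$ inside far sides---minimised over all families $\sqsubset\mathcal U$, so that nested pairs, sinks in $T$, mixed sinks and opposite sinks are all handled uniformly by a strict decrease of $\Phi$; termination becomes transparent and no induction is needed. Your choice of near sides $A_i=V\setminus C_i$ also buys something concrete: the region $P=A_i\cap A_j$ automatically absorbs any component of $G\setminus(S_i\cup S_j)$ that is attached only to $C_i\cup C_j$, whereas the paper's decomposition $S_i=A_i\cup B_i\cup C\cup D$, built on the $x$-component $C_0$, tacitly assumes no such component exists---an assumption that is not automatic for an arbitrary incompatible pair of minimum cuts, and one your route never needs. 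Two points to tighten in a final write-up: first, your dichotomy has to be read as ``opposite'' meaning $y_i\in R$ and $y_j\in Q$, and ``same side'' meaning everything else; in the mixed sub-case $y_i\in T$, $y_j\in Q$ the intersection/union uncrossing still works, but $\partial(A_i\cap A_j)$ is then minimum only as an $x$-$y_j$-cut and $\partial(A_i\cup A_j)$ only as an $x$-$y_i$-cut, which your phrase ``for suitable $y'\in Y$'' covers but deserves an explicit line. Second, the opening hope of pushing the family to pairwise \emph{disjoint} far sides is unattainable in general (in a doubled path $x$--$a$--$y$ with $Y=\set{a,y}$, every minimum $x$-$a$-cut has far side $\set{a,y}$, which meets every $x$-$y$ far side); this is harmless, since your actual argument only ever uses the implication from disjointness to compatibility as motivation and never needs disjointness to be achieved.
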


\begin{proof}
We will prove \Cref{compatible} by induction on $\card{\mathcal U}$. For $\card{\mathcal U} = 1$ there is nothing to show because a set of one cut is always compatible. 

For $\card{\mathcal U}>1$ let $U \in \mathcal U$ and apply the induction hypothesis to $\mathcal U \setminus \set{U}$ to obtain a compatible set $\mathcal S' \sqsubset \mathcal U \setminus \set{U}$ of cuts. By the above remark $\mathcal S' \cup \set{U} \sqsubset \mathcal U$. Clearly $\card{\mathcal S'} \leq \card{\mathcal U}-1$.

If the inequality is strict we can apply the induction hypothesis again to $\mathcal S' \cup \set{U}$ to obtain  $\mathcal S \sqsubset \mathcal S' \cup \set{U}$ where $\mathcal S$ is compatible. From the above remark it follows that $\mathcal S \sqsubset \mathcal U$ which completes the proof.

So assume that $\card{\mathcal S'} = \card{\mathcal U}-1$. Choose a cut $S_1 \in \mathfrak C_x^Y$ fulfilling $\mathcal S' \cup \set{S_1} \sqsubset \mathcal U$ with the property that the number of cuts $S' \in \mathcal S'$ that are incompatible with $S_1$ is minimal. Note that there is such a cut because $\mathcal S' \cup \set{U} \sqsubset \mathcal U$.

If $S_1$ is compatible with all cuts in $\mathcal S'$ we are done. So assume that there is a cut $S_2 \in \mathcal S'$ such that $S_1$ and $S_2$ are incompatible. For $i \in \set{1,2}$ let $y_i \in Y$ be such that $S_i$ is a $x$-$y_i$-cut of minimal cardinality.


For the next step of the proof we will need some definitions. Denote by $C_0$ the component of $G \setminus (S_1 \cup S_2)$ in which $x$ lies. Let $C_i$ be the component of $G \setminus S_i$ in which $y_i$ lies. Let $A_1$ be the set of edges connecting $C_1 \setminus C_2$ to $C_0$ and let $B_1$ be the set of edges connecting $C_1 \cap C_2$ to $C_2 \setminus C_1$. Analogously define $A_2$ and $B_2$. Let $C$ be the set of edges between $C_1 \setminus C_2$ and $C_2 \setminus C_1$ and let $D$ be the set of edges connecting $C_0$ and $C_1 \cap C_2$.

From the definitions it is clear that $A_1,A_2,B_1,B_2,C$ and $D$ are pairwise disjoint and that $S_i = A_i \cup B_i \cup C \cup D$. It is also clear that for the properties \ref{comp1} to \ref{comp3} it does not matter if an edge of $B_1$, $B_2$ or $C$ is contained in any cut.

We will now define a new cut which---depending on where $y_1$ and $y_2$ lie---can either be used to replace $S_1$ and $S_2$ so that we can apply the induction hypothesis again or contradicts the assumption that the number of cuts $S' \in \mathcal S'$ that are incompatible with $S_1$ is minimal.

\begin{itemize}[label=--]
\item Suppose that $y_1$ is contained in $C_1 \cap C_2$. Since $S_i$ is a $x$-$y_i$-cut of minimal cardinality it follows that
\begin{align*}
\card{A_1} + \card{B_1} + \card C + \card D &\leq \card{B_1}+\card{B_2}+\card D && \Rightarrow  & \card{A_1} + \card C &\leq \card{B_2},\\
\card{A_2} + \card{B_2} + \card C + \card D &\leq \card{A_1}+\card{A_2}+\card D && \Rightarrow  & \card{B_2} + \card C &\leq \card{A_1}.
\end{align*}
This implies that $\card C = 0$ and $\card{A_1} = \card{B_2}$. Hence
\[
\card{A_2} + \card{B_2} + \card C + \card D = \card{A_1} + \card{A_2} + \card D
\]
and thus the cut defined by $S^* := A_1 \cup A_2 \cup D$ is an $x$-$y_2$-cut of minimal cardinality. It is easy to see that $(\mathcal S' \setminus S_2) \cup  S^* \sqsubset \mathcal U$ and since $(\mathcal S' \setminus S_2) \cup  S^*$ has strictly less elements than $\mathcal U$ we can apply the induction hypothesis to find a compatible set $\mathcal S \sqsubset (\mathcal S' \setminus S_2) \cup  S^* \sqsubset \mathcal U$.
\item For $y_2 \in C_1 \cap C_2$ an analogous argument to the previous case works.
\item Finally assume that $y_1 \in C_1 \setminus C_2$ and $y_2 \in C_2 \setminus C_1$. Then
\begin{align*}
\card{A_1} + \card{B_1} + \card C + \card D &\leq \card{A_1}+\card{B_2}+\card C && \Rightarrow  & \card{B_1} + \card D &\leq \card{B_2}, \\
\card{A_2} + \card{B_2} + \card C + \card D &\leq \card{A_2}+\card{B_1}+\card C && \Rightarrow  & \card{B_2} + \card D &\leq \card{B_1}.
\end{align*}
This in particular implies that $\card D = 0$ and $\card {B_1} = \card {B_2}$ and thus
\[
	\card{A_1} + \card{B_1} + \card C + \card D = \card{A_1} + \card{B_2} + \card C.
\]
So the cut $S_1^* := A_1 \cup B_2 \cup C$ is an $x$-$y_1$-cut of minimal cardinality. 

Clearly $\mathcal S' \cup \set{S_1^*} \sqsubset \mathcal S' \cup \set{S_1} \sqsubset \mathcal U$.  Whenever $S_1$ and  $S' \in \mathcal S'$ are compatible it can easily be seen that $S_1^*$ and $S'$ are compatible as well (recall that $S'$ and $S_2$ are compatible). The cuts $S_1^*$ and $S_2$ are compatible while $S_1$ and $S_2$ are incompatible. So the number of cuts in $\mathcal S'$ that are incompatible to $S_1^*$ is strictly smaller than the number of cuts that are incompatible to $S_1$. This contradicts $S_1$ minimizing that number. \qedhere
\end{itemize}
\end{proof}

Another tool needed in the construction of the contractions are rays that completely cover an end in the following sense.

\begin{defi}
A ray $\gamma$ is said to \emph{devour} an end $\omega$ if every $\omega$-ray meets $\gamma$.
\end{defi}

From the existence of normal spanning trees (cf. \cite[Theorem 8.2.4]{1086.05001}) one can easily derive that such rays always exist.

\begin{lem}
\label{devour}
Let $G =(V,E)$ be a locally finite graph, $v \in V$, and let $\omega$ be an end of $G$. Then there is a ray $\gamma$ in $G$ that starts in $v$ and devours $\omega$.
\end{lem}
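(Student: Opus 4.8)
The plan is to extract the ray from a normal spanning tree rooted at $v$. Since $G$ is connected and locally finite it is countable, so \cite[Theorem~8.2.4]{1086.05001} supplies a normal spanning tree $T$ of $G$ with root $v$; recall that \emph{normal} means that the two endpoints of every edge of $G$ are comparable in the tree order $\le$ of $T$, where $x\le y$ if $x$ lies on the $v$--$y$ path in $T$. Normal spanning trees are end faithful, so the end $\omega$ contains a \emph{normal ray}, i.e.\ a ray $R=t_0t_1t_2\dots$ with $t_0=v$ that ascends in the tree order. I claim that $\gamma:=R$ already devours $\omega$.

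For a vertex $x$ write $\lceil x\rceil$ for the $v$--$x$ path in $T$ (a finite $\le$-chain) and $\lfloor x\rfloor$ for the set of vertices $y\ge x$. First I would record the standard separation behaviour of normal trees: for each $n$ the set $\lceil t_n\rceil=\set{t_0,\dots,t_n}$ is a finite down-closed separator, the components of $G-\lceil t_n\rceil$ are exactly the up-closures $\lfloor c\rfloor$ with $c$ a minimal vertex of $V\setminus\lceil t_n\rceil$, and by normality no edge of $G$ joins two distinct such up-closures (their vertices are pairwise incomparable). In particular the distinct children of $t_n$ span pairwise separated subtrees, and the tail $t_{n+1}t_{n+2}\dots$ of $R$ lies in $\lfloor t_{n+1}\rfloor$. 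Since $R$ is an $\omega$-ray, $\lfloor t_{n+1}\rfloor$ is precisely the component of $G-\lceil t_n\rceil$ containing $\omega$, and hence every $\omega$-ray has a tail in $\lfloor t_{n+1}\rfloor$ for every $n$.

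The core of the argument is to show that no $\omega$-ray can avoid $R$. To this end I would attach to every vertex $x$ the height $h(x):=\max\set{n : t_n\le x}$, the last point at which the $v$--$x$ path still agrees with the spine $R$; note that $x\in R$ if and only if $t_{h(x)}=x$. Let $\rho$ be any $\omega$-ray and suppose for contradiction that $\rho\cap R=\emptyset$, so $t_{h(x)}<x$ strictly for every $x\in\rho$. The decisive observation is that $h$ is constant along $\rho$: if $x\sim y$ are consecutive vertices of $\rho$ they are comparable, say $x<y$, whence $x\in\lceil y\rceil$; since $x,t_{h(y)}\in\lceil y\rceil$ are comparable and $x\notin R$ would be violated by $x<t_{h(y)}$ (which forces $x\in\lceil t_{h(y)}\rceil\subseteq R$), we must have $t_{h(y)}\le x$, giving $h(x)=h(y)$. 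Thus all of $\rho$ lies in a single side-branch hanging off some $t_m$, that is, inside a component of $G-\lceil t_m\rceil$ different from $\lfloor t_{m+1}\rfloor$. This contradicts the previous paragraph, which forces the tail of $\rho$ into $\lfloor t_{m+1}\rfloor$. Hence $\rho$ meets $R$, and as $\rho$ was an arbitrary $\omega$-ray, $\gamma=R$ devours $\omega$.

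The only delicate point, and the step I would be most careful about, is the bookkeeping with the tree order: that each $\lceil t_n\rceil$ is genuinely down-closed, that distinct up-closures are non-adjacent (this is exactly where normality is used), and that the component of $G-\lceil t_n\rceil$ carrying $\omega$ is $\lfloor t_{n+1}\rfloor$ and not a side-branch. Once these three facts are in place, the height function $h$ does all the work, and everything else is a direct appeal to the cited existence theorem for normal spanning trees.
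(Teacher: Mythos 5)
Your proposal is correct and follows exactly the paper's route: the paper's proof also takes a normal spanning tree rooted at $v$ (via \cite[Theorem~8.2.4]{1086.05001}) and declares the normal $\omega$-ray to devour $\omega$. You have merely filled in the verification (separation properties of down-closed sets and the height-function argument) that the paper leaves to the reader.
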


\begin{proof}
Consider a normal spanning tree with root $v$. The normal $\omega$-ray in this tree has the desired property.
\end{proof}

Now we have all means for constructing the sequence of contractions. The only thing that is still required for proving \Cref{onetreecountable,twotreescountable} are conditions to ensure that the topological spanning trees which we get in the limit have the desired properties. The next two results provide us with such conditions.

\begin{lem}
\label{oneray}
Let $G$ be a locally finite graph and let $T$ be a topological spanning tree of $G$ such that any two topological rays of $T$ that converge to the same end of $G$ have a common topological tail. Then $T \cap G$ is an end faithful spanning tree of $G$.
\end{lem}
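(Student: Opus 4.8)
The plan is to verify that $T \cap G$ is a spanning tree and then to check the two conditions in the equivalent description of end faithfulness recalled in \Cref{definitions}: namely \emph{(i)} that $T \cap G$ contains an $\omega$-ray for every end $\omega$ of $G$, and \emph{(ii)} that any two rays of $T \cap G$ lying in the same end of $G$ also lie in the same end of $T \cap G$. Since $T$ is a standard subspace (it contains every edge of which it contains an inner point), $T \cap G$ is a well-defined spanning subgraph of $G$, and it is acyclic because any finite cycle in $T \cap G$ would be a topological circle in $T$. What remains is connectivity and the two end-faithfulness conditions.

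The key technical step, on which everything else rests, is the following claim: no arc $A$ contained in $T$ has an end of $G$ in its interior. Indeed, if some $\omega \in \Omega(G)$ were an interior point of $A$, then the two halves of $A$ on either side of $\omega$ would be topological rays of $T$, both converging to $\omega$. By hypothesis they would share a common topological tail; but near $\omega$ these two rays are disjoint by injectivity of $A$, a contradiction. I expect this claim to be the main obstacle, as it requires handling topological rays and the notion of a common topological tail with some care.

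Granting the claim, connectivity follows quickly. Given vertices $u,v$, topological path-connectedness of $T$ yields an arc $A \subseteq T$ from $u$ to $v$; by the claim $A$ contains no end of $G$, so $A \subseteq G$. As a continuous image of $[0,1]$ the arc $A$ is compact, and a compact subset of the locally finite $1$-complex $G$ meets only finitely many edges: an infinite family of met edges would force a point of $A$ at which infinitely many edges accumulate, contradicting local finiteness, while accumulation at an end is excluded since $A \subseteq G$. Hence $A$ is a finite $u$-$v$-path in $T \cap G$, so $T \cap G$ is connected and therefore a spanning tree.

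Finally I turn to end faithfulness. Condition \emph{(i)} is handled in the same way: for an end $\omega$, take an arc $A \subseteq T$ from a fixed vertex to $\omega$; by the claim its interior avoids all ends, so $A \setminus \set{\omega} \subseteq G$, and injectivity forces $A$ to traverse full edges through a sequence of distinct vertices tending to $\omega$, so that $A \cap G$ is an ordinary $\omega$-ray in $T \cap G$. Condition \emph{(ii)} is immediate from the hypothesis: if $R_1,R_2$ are rays of $T \cap G$ lying in the same end $\omega$ of $G$, they are topological rays of $T$ converging to $\omega$, hence share a common topological tail, which is an ordinary common tail, so $R_1$ and $R_2$ lie in the same end of $T \cap G$. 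With both conditions verified, $T \cap G$ is an end faithful spanning tree. Alternatively, the argument for \emph{(i)} shows $\overline{T \cap G} = T$, so once connectivity is established one may instead invoke \Cref{treetoptree} to obtain end faithfulness directly.
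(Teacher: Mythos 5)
Your proof is correct and rests on exactly the same key idea as the paper's: an arc in $T$ that passes through an end of $G$ splits there into two disjoint topological rays of $T$ converging to that end, contradicting the common-tail hypothesis; the paper applies this once (to a $u$-$v$-arc joining two supposed components of $T \cap G$) to get connectivity, and then concludes end faithfulness by citing \Cref{treetoptree}---precisely the alternative route you mention in your final sentence. Your direct verification of the two ray conditions is sound but duplicates work that \Cref{treetoptree} renders unnecessary.
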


\begin{proof}
Since $T$ contains no topological circle it doesn't contain a finite circle. So we only have to prove that $T \cap G$ is connected because in this case $T \cap G$ is a spanning tree that induces a topological spanning tree and thus it is end faithful by \Cref{treetoptree}.

So assume that $T \cap G$ was not connected and let $u$ and $v$ be vertices that lie in different components of $T \cap G$. Since $T$ is arcwise connected there is a $u$-$v$-arc $A$ in $T$ which clearly has to contain some end $\omega$. It is immediate that $A - \omega$ consists of two disjoint topological rays starting in $u$ and $v$ respectively. Both of those topological rays converge to the same end $\omega$ but they do not have a common topological tail, a contradiction.
\end{proof}

\begin{lem}
\label{subtree}
Let $G$ be a locally finite graph, let $H$ be a subgraph of $G$ and let $T \subseteq H$ be an end faithful spanning tree of $G$. Then $H$ is end faithful.
\end{lem}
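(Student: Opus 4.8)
The plan is to verify the combinatorial description of end faithfulness recalled earlier: that $H$ contains an $\omega$-ray for every end $\omega$ of $G$, and that any two rays of $H$ lying in the same end of $G$ also lie in the same end of $H$. The first condition is immediate, since $T$ is an end faithful spanning tree of $G$ it contains an $\omega$-ray for every $\omega \in \Omega(G)$, and because $T \subseteq H$ this ray is a ray of $H$ as well. Hence all of the work lies in the second condition, which asserts that $H$ does not \emph{split} an end of $G$ into several ends.

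For the second condition I would fix an end $\omega$ of $G$ and let $\rho$ be an $\omega$-ray contained in $T$. Since lying in the same end of $H$ is a transitive relation, it suffices to show that every ray $\gamma$ of $H$ converging to $\omega$ is equivalent in $H$ to $\rho$; then any two such rays $\gamma_1,\gamma_2$ are each equivalent to $\rho$, hence to one another, in $H$. Thus the statement reduces to the following claim: \emph{if $\gamma \subseteq H$ converges to $\omega$ and $\rho \subseteq T$ is an $\omega$-ray of $T$, then $\gamma$ and $\rho$ have tails in a common component of $H - S$ for every finite edge set $S \subseteq E(H)$.}

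To prove this claim I would first transfer the convergence of $\gamma$ from $\overline G$ to $\overline T$. As $T$ is end faithful, the canonical map $\varphi \colon \overline T \to \overline G$ restricts to a bijection on ends; and since $\overline T$ is compact (being closed in $\overline G$) and $\varphi$ is continuous with $\varphi(u_n)=u_n \to \omega$ for the vertices $u_n$ of $\gamma$, every subsequential limit of $(u_n)$ in $\overline T$ is an end of $T$ that maps to $\omega$, hence equals the unique such end $\omega'$. Therefore $u_n \to \omega'$ in $\overline T$, and likewise the vertices of $\rho$ converge to $\omega'$. Writing $T_m$ for the component of $T$ minus the $m$-th edge of $\rho$ that contains the tail of $\rho$, each $T_m$ is a convex subtree, any fixed edge eventually lies outside $T_m$, and both $u_n$ and the late vertices of $\rho$ eventually lie in $T_m$. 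Given a finite $S \subseteq E(H)$, I would choose $m$ so large that $E(T_m)$ is disjoint from $S$, then pick deep vertices $u_n$ of $\gamma$ and $r_N$ of $\rho$ inside $T_m$; convexity forces the unique $T$-path between them to remain in $T_m$ and hence to avoid $S$. Concatenating a tail of $\gamma$ that avoids $S$, this $T$-path, and a tail of $\rho$ that avoids $S$ then exhibits tails of $\gamma$ and $\rho$ in a single component of $H - S$, as required.

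The main obstacle is precisely the step controlling the connecting $T$-paths: one must guarantee that, for every prescribed finite cut $S$ of $H$, the vertices of $\gamma$ and of $\rho$ can be joined inside $T$ by a path lying arbitrarily deep towards $\omega$, so as to miss $S$. This is exactly where end faithfulness of $T$ enters, through the transfer of convergence to $\overline T$, combined with local finiteness and the convexity of the tail subtrees $T_m$; once this is secured, assembling the two ray tails with the connecting path is routine.
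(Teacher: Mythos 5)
Your proof is correct, but it follows a genuinely different route from the paper's. Both arguments verify the same two-part characterisation of end faithfulness, and the first half (every end of $G$ is represented in $H$ by a ray of $T$) is identical. For the second half the paper gives a four-line argument via the Star-Comb Lemma (\Cref{starcomb}): applied inside $T$ to the vertex set of the given ray $\gamma \subseteq H$, local finiteness yields a comb in $T$ with all teeth on $\gamma$; its spine is an $\omega$-ray of $T$, hence shares a tail with the chosen $\omega$-ray of $T$ because $T$ is end faithful, and the infinitely many disjoint comb paths, all lying in $T \subseteq H$, already witness that no finite cut of $H$ can separate the two rays. You instead build the required connections by hand: you transfer convergence of $\gamma$ to the compactification $\overline T$ via the end-space homeomorphism, and then use the nested convex tail-subtrees $T_m$ to produce, for each finite $S \subseteq E(H)$, a connecting $T$-path deep enough to avoid $S$. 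Your version is longer but self-contained (it never invokes the Star-Comb Lemma) and it makes explicit the topological mechanism by which end faithfulness of $T$ enters; the comb in the paper's proof is essentially a packaged form of the ``arbitrarily deep connections'' that you construct one finite edge set at a time. One small inaccuracy worth fixing: you justify compactness of $\overline T$ by saying it is closed in $\overline G$, which conflates the Freudenthal compactification of $T$ with the closure of $T$ in $\overline G$; compactness of $\overline T$ holds simply because $T$ is a connected locally finite graph, and the identification of $\overline T$ with that closure is itself a consequence of end faithfulness (cf.\ \Cref{treetoptree}). This imprecision does not affect the validity of your argument.
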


\begin{proof}
Let $\omega$ be an end of $G$. Then there is a ray $\gamma$ in $T$ that belongs to $\omega$ since $T$ is end faithful. Hence $H$ contains a ray to every end of $G$. So we only need to show that every ray $\gamma'$ in $H$ that belongs to $\omega$ is equivalent to $\gamma$ in $H$. By the Star-Comb-\Cref{starcomb} the tree $T$ contains an infinite comb with all teeth in $\gamma'$ (note that $T$ is locally finite). The spine of this comb lies in $\omega$ and thus is either a tail of $\gamma$ or vice versa.
\end{proof}

Now we are finally in a position to prove \Cref{onetreecountable,twotreescountable}.

\begin{proof}[Proof of \Cref{onetreecountable}]
Let $(v_n)_{n\in \mathbb N}$ and $(\omega_n)_{n \in \mathbb N}$ be enumeration of all vertices and ends of $G = (V,E)$ respectively. We will now define contractions $G_n$ of $G$ such that $v_k \in G_n$ for all $k\leq n$ and the ends of $G_n$ are exactly $\set{\omega_1, \ldots ,\omega_n}$. Furthermore we require all of the $G_n$ to be locally finite. 

Then we construct end faithful spanning tree packings $\mathcal T_n = \set{T_n^1, \ldots , T_n^{k-1}}$ of $G_n$ such that $T_n^i\mid_{G_{n-1}} = T_{n-1}^i$. By \Cref{limtree} the limit of this sequence is a topological spanning tree packing of $G$ and we will apply \Cref{oneray} to show that it is indeed an end faithful spanning tree packing.

For the construction of the contractions $G_n$ let $G_0$ be the graph on one vertex, i.e., all of $V$ has been contracted. The graph $G_{n-1}$ is locally finite and does not contain an $\omega_n$-ray. Hence there must be a vertex $v$ in $G_{n-1}$ which resulted from contracting a subgraph $H_n$ of $G$ containing a tail of every $\omega_n$-ray.

Select a ray $\gamma$ in $H_n$ that devours $\omega_n$, i.e., $H_n - \gamma$ doesn't contain an $\omega_n$-ray. Such a ray exists by \Cref{devour}. For every $N>n$ such that a ray of $\omega_N$ lies in $H_n$ select a cut $S_N$ of minimal cardinality in $H_n$ separating $\omega_N$ from the set of vertices of $\gamma$. Note that every such cut is finite because there are only finitely many edge disjoint rays in $\omega_N$ that start in $\gamma$ (if there were infinitely many such rays then $\gamma$ would lie in $\omega_N \neq \omega_n$, a contradiction).

Now consider the component $C_{\omega_n}$ of $H_n - \bigcup S_N$ in which $\gamma$ lies. Clearly all rays in $C_{\omega_n}$ belong to $\omega_n$ because every other end has been separated from $\gamma$ by one of the cuts $S_N$. 

For each component $C$ of $H_n - C_{\omega_n}$ the set of edges connecting $C_{\omega_n}$ to $C$ is finite because otherwise we would by the Star-Comb-\Cref{starcomb} get an $\omega_n$-ray in $C$ contradicting the assumption that $\gamma$ devours $\omega_n$. Hence we can choose a finite set $\mathcal S_C^{\omega_n}$ of cuts $S_N$ containing all of these edges and by \Cref{compatible} we can find a compatible set of cuts $\tilde{\mathcal S}_C^{\omega_n} \sqsubset \mathcal S_C^{\omega_n}$.

Having chosen such compatible sets of cuts for each component of $H_n - C_{\omega_n}$ we contract every component of $H - \bigcup _C \bigcup _{S \in \tilde{\mathcal S}_C^{\omega_n}}S$ except $C_{\omega_n}$ to a vertex and denote the resulting graph by $H_n'$.

Define $G_n'$ to be the graph obtained from $G_{n-1}$ by replacing $v$ by $H_n'$. Clearly $G_n'$ is a contraction of $G$ with exactly the desired ends. If $v_n \in G_n'$ we choose $G_n = G_n'$. Otherwise decontract the contracted vertex $w$ of $G_n'$ that contains $v_n$ and denote the part of the resulting graph that corresponds to $w$ by $K_n$, i.e., $K_n$ is an induced subgraph of $G$ defined similar as $H_n$ above. 

For every end $\omega$ of $K_n$ choose a $\omega$-$v_n$-cut of minimal cardinality and denote the set of these cuts by $\mathcal S$. The component $C_{v_n}$ of $G - \bigcup_{S\in \mathcal S} S$ in which $v_n$ lies is obviously finite. Hence we can find a finite subset $\mathcal S^{v_n} \subseteq \mathcal S$ such that the component of $G - \bigcup_{S\in \mathcal S^{v_n}} S$ in which $v_n$ lies is exactly $C_{v_n}$. Now apply \Cref{compatible} to find a compatible set $\tilde{\mathcal S}^{v_n} \sqsubset S^{v_n}$.

Finally contract every component of $K_n -  \bigcup _{S \in \tilde{\mathcal S}^{v_n}} S$ except for the one containing $v_n$ to a vertex and denote the resulting graph by $K_n'$. We obtain $G_n$ from $G_n'$ by replacing $w$ by $K_n'$.

Clearly $G_n$ is a contraction of $G$. It is locally finite because 
\begin{itemize}[label=--]
\item $G_{n-1}$ is locally finite,
\item $G$ is locally finite, so all vertices of $H_n'$ and $K_n'$ that are not contracted have finite degree, and 
\item all newly contracted vertices have finite degree since the cuts that separate the corresponding vertex sets from the rest of the graph are finite.
\end{itemize}

The next step of the proof is to construct an end faithful spanning tree packing $\mathcal T_n$ of cardinality $k-1$ of $G_n$ out of such a packing $\mathcal T_{n-1}$ of $G_{n-1}$.  \Cref{contract2} and \Cref{treetoptree} imply that it is sufficient to find $k-1$ edge disjoint spanning trees of $K_n'$ and $k-1$ edge disjoint end faithful spanning trees of $H_n'$.

Consider the graph $H_n''$ obtained from $G_n$ by contracting $G_n - H_n'$ to a single vertex $x$ (so $H_n' = H_n'' -x$). By the construction of the sets $\tilde{\mathcal S}_C^{\omega_n}$ and $\tilde{\mathcal S}^{v_n}$ of cuts we know that the set $U$ of edges connecting $H_n'$ to $G_n - H_n'$ is a cut of minimal cardinality separating $G_n - H_n$ and some end $\omega$ of $H_n$. Hence $U$ is either an $\omega_n$-$x$-cut of minimal cardinality or an $x$-$y$-cut of minimal cardinality for some contracted vertex $y$ of $H_n'$. 

In the first case we can apply \Cref{onetree} directly to obtain an end faithful spanning tree packing of $H_n'$. 

In the latter case choose an $\omega_n$-$x$-cut $U'$ of minimal cardinality in $H_n''$. Clearly the graph $H_n'' - U'$ has two components one of which is infinite. Denote the vertex set of the finite component by $C_{\not{\infty}}$ and the vertex set of the infinite component by $C_\infty$.

Now $U$ is an $x$-$y$-cut of minimal cardinality for some contracted vertex $y$ of $H_n''\cont{C_{\not{\infty}}}$. Thus we can apply \Cref{treecut3} to find a spanning tree packing of cardinality $k-1$ of $H_n''\cont{C_{\not{\infty}}} - x$. Furthermore by \Cref{onetree} we can find $k-1$ edge disjoint end faithful spanning trees of $H_n''[C_\infty]$. \Cref{contract2} yields the desired end faithful spanning tree packing of $H_n''-x = H_n'$.

The spanning tree packing of $K_n'$ can be found by \Cref{treecut3} for the same reasons as above.

Finally we need to show that the limit of the sequence of spanning tree packings that we just constructed is indeed an end faithful spanning tree packing. By \Cref{limtree} it is a topological spanning tree packing because every end faithful spanning tree is also a topological spanning tree. Hence to be able to apply \Cref{oneray} we only need to show that for each $T \in \lim_{n \to \infty} \mathcal T_n$ and for every end $\omega_n$ of $G$ any two topological $\omega$-rays in $T$ have a common tail.

Assume that this was not the case and let $\gamma_1$ and $\gamma_2$ be topological $\omega_n$-rays with no common tail.  The restrictions $\gamma_1'$ and $\gamma_2'$ of the two rays to $G_n$ are again topological $\omega_n$-rays in $T_n = T \mid _{G_n} \in \mathcal T_n$ with no common tail. 

Since $G_n$ has only finitely many ends we may assume that $\gamma_1'$ and $\gamma_2'$ are rays (otherwise choose tails containing no ends). Now $\gamma_1'$ and $\gamma_2'$ are equivalent in $G_n$ but not in $T_n$, a contradiction to $T_n$ being an end faithful spanning tree of $G_n$.
\end{proof}

The construction process in the proof of \Cref{twotreescountable} is analogous to what was done in the previous proof hence we do not describe it in detail. Instead we indicate the modifications to be made and show how \Cref{subtree} can be used to show that the topological spanning tree packing has the desired properties.

\begin{proof}[Proof Sketch of \Cref{twotreescountable}]
For the construction of the the topological spanning tree packing follow the construction in the proof of \Cref{onetreecountable} (use \Cref{treecut2} instead of \Cref{treecut3} and \Cref{twotrees} instead of \Cref{onetree}).

To see that the union of two trees $T^i = \lim T_n^i$ and $T^j = \lim T_n^j$ is end faithful we construct a sequence of end faithful spanning trees of $T_n^i \cup T_n^j$ (and thus also of $G_n$) in every step of the construction process. We can do so by choosing an end faithful spanning tree of the graph $H_n'$ and a spanning tree of $K_n'$ which are contained in $T_n^i \cup T_n^j$ and applying \Cref{contract2}.

The limit of these spanning trees is an end faithful spanning tree by the same arguments as in the proof of \Cref{onetreecountable} and so we can apply \Cref{subtree} to show that $T^i \cup T^j$ is indeed an end faithful spanning subgraph of $G$.
\end{proof}

\section{An Application: Hamiltonian Cycles in Locally Finite Line Graphs}
\label{hamilton}

As already mentioned in the introduction the results of the previous section can be used to partially verify Georgakopoulos' \Cref{agelos}. More precisely we can show that the following special case holds.

\newtheorem*{restate:linegraph}{\Cref{linegraph}}
\begin{restate:linegraph}
The line graph of every locally finite $6$-edge connected graph with at most countably many ends has a Hamiltonian circle.
\end{restate:linegraph}

This theorem extends a result by \citet{brewfunk} who showed that Georgakopoulos' conjecture is true for $6$-edge connected graphs with finitely many ends all of which are thin.

To prove \Cref{linegraph} we will combine one of \Cref{onetreecountable,twotreescountable} with some known results from infinite graph theory. It is worth noting that the finite counterpart can be proved in a completely analogous way. Hence for every step we will also provide a brief sketch of the finite analogue. Basically the proof can be outlined as follows. 
\begin{enumerate}
\item Find two edge disjoint topological spanning trees $T, T'$ such that $T \cap G$ is an ordinary spanning tree of $G$.
\item Use the trees $T$ and $T'$ to construct an end faithful Eulerian subgraph $H$ of $G$.
\item From an Euler tour in $H$ construct a Hamilton cycle in $L(G)$.
\end{enumerate}

In the finite case one simply needs to find two edge disjoint spanning trees in the first step. So \Cref{tuttenashwilliams} can be used. Either of \Cref{onetreecountable,twotreescountable} can be used in the infinite case. Note that for a finite graph it suffices if it is $4$-edge connected while in the general case we require an edge connectivity of at least $6$. 

The second step uses the notion of fundamental circles where the fundamental circle of an edge $e$ with respect to a (topological) spanning tree $T$ is the unique (topological) circle in $T+e$. Now, in the finite case let $H$ be the subgraph of $G$ which contains an edge $e \in E$ if and only if it is contained in an odd number of fundamental circles of edges in $T'$ with respect to $T$. Clearly $H$ is connected because $T' \subseteq H$ and it is easily seen that all vertex degrees in $H$ are even. Hence $H$ is a spanning Eulerian subgraph of $G$.

In the infinite case we would like to use the same construction. To make this well-defined it is necessary that every edge is only contained in finitely many of the fundamental circles. Fortunately, by the following result this is always the case.

\begin{theo}[\citet{1050.05071}]
\label{sumwelldefined}
Let $G$ be a locally finite graph and let $T$ be a topological spanning tree of $G$. Then every edge is only contained in finitely many fundamental circles with respect to $T$.
\end{theo}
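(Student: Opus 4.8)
The plan is to recast the statement in terms of \emph{fundamental cuts} and then argue by compactness of $\overline G$. First I would note that a chord $e \notin T$ lies in exactly one fundamental circle, namely its own $C_e$: since $C_e \subseteq T + e$ and $e$ is the only non-tree edge available, any circle through $e$ must equal $C_e$. Hence the entire content concerns edges $f \in T$, and for such an $f$ the fundamental circle $C_e$ of a chord $e = uv$ contains $f$ precisely when $f$ lies on the unique $T$-arc between $u$ and $v$. Deleting the interior $f^\circ$ of the edge $f$ splits the topological tree $T$ into two uniquely arcwise connected parts $T_x \ni x$ and $T_y \ni y$ (where $f = xy$), and $f \in C_e$ holds exactly when the endpoints $u,v$ of $e$ lie on different sides. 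Writing $A = T_x \cap (V \cup \Omega)$ and $B = T_y \cap (V \cup \Omega)$, which partition $V \cup \Omega$, the number of chords whose fundamental circle contains $f$ is at most the number of edges of $G$ running between $A$ and $B$. So it suffices to prove that this \emph{fundamental cut} $D_f$ is finite.

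The key technical point, and what I expect to be the main obstacle, is to establish that $T_x$ and $T_y$ are \emph{disjoint compact} subspaces of $\overline G$. Here I would use that for locally finite $G$ the space $\overline G$ is compact and metrizable and that $T$, being (the closure of) a topological spanning tree, is closed in $\overline G$ and hence compact. Removing the open arc $f^\circ$ leaves the compact space $T \setminus f^\circ$, and I would show that it falls into two parts $T_x, T_y$, each closed in it and therefore compact, with $x \in T_x$, $y \in T_y$ and $T_x \cap T_y = \emptyset$. Making this separation watertight is the delicate step, since a topological spanning tree need not be locally connected at its ends; everything afterwards is soft topology.

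With the separation in hand I would finish by an accumulation argument. Suppose $D_f$ were infinite, so that there are distinct edges $e_1, e_2, \dots$ each with one endpoint $u_i \in A$ and one endpoint $v_i \in B$. Choosing an interior point $m_i$ of each $e_i$ and using compactness of $\overline G$, some subsequence of the $m_i$ converges; since $G$ is locally finite the limit cannot be a vertex nor an interior point of a single edge, so it is an end $\omega$. I would then check that both endpoint sequences converge to $\omega$ as well: for any basic neighbourhood $C_S \cup \Omega_S \cup E_S$ of $\omega$ only finitely many $e_i$ can meet the finite cut $S$, so all but finitely many $e_i$ lie entirely inside $C_S$, forcing $u_i \to \omega$ and $v_i \to \omega$. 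But then $\omega \in \overline{A} \subseteq T_x$ and $\omega \in \overline{B} \subseteq T_y$, contradicting $T_x \cap T_y = \emptyset$. Hence $D_f$ is finite, which is exactly the assertion that $f$ lies in only finitely many fundamental circles.
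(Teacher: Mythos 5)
First, a caveat on the comparison: the paper offers no proof of this statement at all --- it is quoted as a known theorem of \citet{1050.05071} --- so your argument can only be judged on its own merits, as a reproof. Its architecture is the standard one and most of it is correct: a chord lies only in its own fundamental circle, so the claim concerns tree edges $f$; for a tree edge, $f \in C_e$ is equivalent to the endpoints of $e$ lying in distinct arc-components $T_x$, $T_y$ of $T \setminus f^\circ$; and your closing accumulation argument (midpoints of infinitely many crossing edges converge to an end $\omega$, hence so do both endpoint sequences, hence $\omega$ lies in the closure of both sides) is sound as written.

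The genuine gap is exactly the step you flag and then defer: that $T_x$ and $T_y$ are \emph{closed}. This is not soft topology. In a compact metric space arc-components need not be closed --- the closed topologist's sine curve is compact and connected but has a non-closed arc-component --- and nothing in your argument rules out that $T \setminus f^\circ$ is a space of this kind: $T$ need not be locally connected at its ends, and $T \cap G$ can even be a disconnected graph (this is precisely the discrepancy between topological and end faithful spanning trees that the surrounding paper revolves around, cf.\ \Cref{treetoptree}). So general topology gives you compactness of $T \setminus f^\circ$ and the fact that it has exactly two arc-components, but not that these are separated; and your final contradiction uses exactly $\overline{A} \subseteq T_x$ and $\overline{B} \subseteq T_y$, so the gap is load-bearing, not cosmetic. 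To close it one needs graph-specific machinery: either the non-trivial theorem of Diestel and K\"uhn that a closed connected subspace of $\overline G$ containing every edge of which it contains an inner point is arc-connected --- then $T \setminus f^\circ$, admitting no $x$--$y$ arc by circle-freeness, must be topologically disconnected, and its two arc-components are its clopen components, hence compact and with disjoint closures --- or Georgakopoulos' convergence lemma (\citet{pre05576421}, the one invoked in \Cref{limtree}): from arcs $x$--$u_i$ in $T_x$ with $u_i \to \omega$ extract an $x$--$\omega$ arc inside $T \setminus f^\circ$, which together with the $y$--$\omega$ arc in $T_y$ and the edge $f$ yields a topological circle in $T$, a contradiction. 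Either way, the ``delicate step'' you set aside is the actual mathematical content of the theorem; as written, your proof assumes it rather than proves it.
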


Next we need to define what we mean by an infinite Eulerian graph and to make sure that the construction used above really yields such a graph.
 
\begin{defi}
A \emph{topological Euler tour} of a graph $G$ is a continuous map $\sigma \colon S^1 \to  \overline G$ such that every inner point of an edge is the image of exactly one point of $S^1$. A locally finite graph is called \emph{Eulerian} if it admits a topological Euler tour.
\end{defi}

Above we used the well-known result from finite graph theory that a graph is Eulerian if and only if all vertex degrees are even. The following theorem by Diestel and Kühn provides a similar characterisation for infinite Eulerian graphs.

\begin{theo}[\citet{1063.05076}]
\label{eulerian}
For a locally finite graph $G$ the following two statements are equivalent.
\begin{enumerate}
\item $G$ is Eulerian.
\item Every cut of $G$ is either even or infinite.
\end{enumerate}
\end{theo}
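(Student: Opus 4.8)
The plan is to prove the two implications separately; the direction from the first statement to the second is a parity argument, while the converse needs a limiting construction. Suppose $\sigma \colon S^1 \to \overline G$ is a topological Euler tour and let $S = E(X, V \setminus X)$ be a finite cut given by a bipartition of the vertices. Because $S$ is finite, every end of $G$ lies in the closure of exactly one of the two sides, so deleting the midpoints of the edges of $S$ from $\overline G$ yields two disjoint open pieces, one for each side. Each edge of $S$ is traversed exactly once, hence exactly $\card S$ points of $S^1$ map to these midpoints, and they divide $S^1$ into $\card S$ arcs. The image of each arc is connected and avoids the midpoints, so it lies entirely in one piece, while consecutive arcs lie in opposite pieces since crossing a midpoint switches sides. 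Traversing the circle once, the sides must alternate and close up, which forces $\card S$ to be even; infinite cuts satisfy the conclusion trivially.

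For the converse I would construct a tour by exhaustion; we may assume $G$ is connected, since otherwise no single closed curve can meet all edges while the empty cut is still even. Fix an exhausting sequence $(V_n)_{n \in \mathbb N}$ of finite vertex sets with $\bigcup_n V_n = V$ and form the finite minors $G_n = G\cont{V_n}$. As $G$ is locally finite, each contracted vertex of $G_n$ corresponds to a component $C$ of $G - V_n$ and has degree $\card{E(C, V\setminus C)}$, a finite cut, hence even by hypothesis; the vertices of $V_n$ retain their (even) degrees. Thus every $G_n$ is a finite connected multigraph with all degrees even and so admits an ordinary Euler tour $\sigma_n$.

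The main work is to assemble the $\sigma_n$ into a single topological Euler tour of $G$. The key observation is that $\sigma_n$ induces, at each contracted vertex $C$, a pairing of the edges of the cut $E(C, V \setminus C)$: the tour visits $C$ exactly $\card{E(C, V\setminus C)}/2$ times, each visit using an entering and a leaving edge. When $V_n$ grows, $C$ breaks into smaller components that eventually shrink onto the ends they contain, and the limit tour should pass through such an end once for each pairing that persists through all refinements. The crux is therefore to choose the $\sigma_n$ \emph{coherently}, so that the pairings at consecutive levels are compatible; I would secure this by a compactness argument on the finitely branching tree of admissible finite tours, or by an explicit inductive refinement that never disturbs an already fixed initial segment. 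Given a coherent sequence, the limiting map is obtained by concatenating the finite walks and inserting the ends at the surviving pairings, and one checks that it is continuous at every end (here the relevant separating cuts shrink exactly as the basic neighbourhoods of $\overline G$ demand), that it is a single closed curve, and that it meets each edge precisely once. Verifying continuity and the single-passage property at the ends is the hard part of the argument; the finite even-degree bookkeeping is routine.
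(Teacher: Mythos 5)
First, a point of context: the paper does not prove \Cref{eulerian} at all --- it is quoted as an external result of \citet{1063.05076}, whose published proof goes through the topological cycle space (the edge set of $G$ lies in the cycle space if and only if every finite cut is even, and a connected standard subspace whose edge set lies in the cycle space admits a topological Euler tour). So your inverse-limit route is genuinely different from the proof in the literature. Your first direction is correct and complete: since every inner edge point has exactly one preimage, $\sigma$ cannot turn back inside an edge, so it crosses each midpoint of the finite cut transversally, and the $\card S$ midpoint preimages divide $S^1$ into arcs whose images alternate between the two open sides, forcing $\card S$ to be even. (Two small remarks: you silently read ``cut'' as the edge boundary of a vertex bipartition, which is the reading under which the theorem is true --- with the looser definition of \Cref{definitions} even a finite $4$-cycle has an odd ``cut'' of three edges; and connectivity of $G$ is indeed an implicit standing hypothesis, as in \Cref{definitions}, not something one may ``assume without loss of generality''.)

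The second direction, however, is a plan rather than a proof, and everything that makes the theorem nontrivial sits exactly in the steps you defer. Two concrete gaps. (i) Coherence cannot be formulated on cyclic \emph{edge} sequences: in a multigraph two consecutive edges of a tour may share both endpoints (in $G\cont{V_n}$ a vertex of $V_n$ may be joined by several parallel edges to a single contracted vertex), so the ``pairing at a contracted vertex'' is not determined by the edge sequence. You must treat tours as closed walks, define the restriction of a tour of $G\cont{V_{n+1}}$ to $G\cont{V_n}$ by collapsing the maximal subwalks inside contracted components, and only then apply K\"onig's lemma to the finite nonempty sets of such walks. (ii) Far more seriously, the well-definedness and continuity of the limit map at ends is asserted (``one checks''), not proved, and this is the actual content of the theorem. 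What has to be shown is, for instance: if arcs of edges $e_1,e_2,\dots$ and $f_1,f_2,\dots$ accumulate at a common point of $S^1$ from its two sides, then the $e_i$ and the $f_j$ converge to one and the same end of $\overline G$. This can be derived from coherence --- if a finite cut $S$ separated the two candidate ends, then once $S \subseteq E(G\cont{V_n})$ every tour segment leading from an $e_i$ to an $f_j$ would have to traverse an edge of the finite set $E(G\cont{V_n})$, and since the circular intervals between $e_i$ and $f_j$ are nested with no edge in their intersection, one fixed such edge would lie in all of them, which is absurd --- but no argument of this kind appears in your text, nor the companion argument that a gap bordered by only finitely many arcs is a vertex visit whose successor edge stabilises. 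Until (i) and (ii) are written out you have only reduced the theorem to its hard case; with them, your argument would be a correct and arguably more self-contained alternative to the cycle-space proof.
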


With this result on hand it can easily be seen that the construction from the finite case also yields an Eulerian subgraph in the locally finite case.

\begin{prop}
\label{sumevencuts}
Let $G$ be a locally finite graph and let $T,T'$ be topological spanning trees of $G$. Let $H$ be the subgraph of $G$ which contains all edges of $T$ and those edges of $T'$ that are in an odd number of fundamental circles of edges in $T$. Then $H$ contains an even number of edges in $S$ for every finite cut $S$ in $G$.
\end{prop}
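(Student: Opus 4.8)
The plan is to realise $H$ as a (possibly infinite) symmetric difference of fundamental circles and then to invoke the fact that a topological circle meets a finite cut in an even number of edges. Throughout I assume, as holds in the application in \Cref{hamilton} and as is implicit in the construction, that $T$ and $T'$ are edge disjoint. First I would, for each $f \in E(T)$, let $D_f$ denote the fundamental circle of $f$ with respect to $T'$, i.e.\ the unique topological circle contained in $T'+f$; this is well defined precisely because $f \notin E(T')$. For an edge $e \in E$ write $n(e)$ for the number of $f \in E(T)$ with $e \in D_f$. Applying \Cref{sumwelldefined} to the topological spanning tree $T'$ shows that every edge lies in only finitely many fundamental circles with respect to $T'$, so $n(e)$ is finite for every $e$.

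The first substantive step is to verify that $e \in H$ \emph{if and only if} $n(e)$ is odd. If $e \in E(T)$ then $e$ lies in its own circle $D_e$ and in no other $D_f$, since for $f \neq e$ the circle $D_f$ consists of $f$ together with a path in $T'$, neither of which contains the edge $e \in E(T)$; thus $n(e)=1$ is odd, and indeed $e \in H$ as an edge of $T$. If $e \in E(T')$ then $e$ is never the chord of a $D_f$ and lies in $D_f$ exactly when it belongs to the $T'$-path of $D_f$, so $n(e)$ counts precisely the fundamental circles of edges of $T$ that contain $e$; by the definition of $H$ we have $e \in H$ iff this count is odd. Finally, if $e \notin E(T) \cup E(T')$ then $e$ is neither a chord nor an edge of any $T'$-path, so $n(e)=0$ and $e \notin H$. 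This establishes the equivalence, so that $H$ is exactly the symmetric difference of the circles $D_f$.

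Now I would fix a finite cut $S$, count modulo $2$, and interchange the two sums (legitimate since $S$ is finite and each of its edges lies in only finitely many $D_f$ by \Cref{sumwelldefined}), obtaining
\[
\card{H \cap S} = \card{\set{e \in S : n(e) \text{ is odd}}} \equiv \sum_{e \in S} n(e) = \sum_{f \in E(T)} \card{D_f \cap S} \pmod 2 .
\]
Each $D_f$ is a topological circle, and a topological circle meets a finite cut in an even number of edges: parametrising the circle by $S^1$ and traversing it once, each passage from one side of the cut to the other uses exactly one edge of $S$, and the number of passages is even because the parametrisation returns to its starting point. Hence every summand $\card{D_f \cap S}$ is even, the finite sum on the right is even, and therefore $\card{H \cap S}$ is even, as required. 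I expect the two delicate points to be the well-definedness of the infinite symmetric difference—supplied by \Cref{sumwelldefined}—and the evenness of $\card{D_f \cap S}$ for genuinely topological (rather than finite) circles, which is the real topological input and the main obstacle; everything else is the combinatorial bookkeeping above.
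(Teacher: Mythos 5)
Your proof is correct and takes essentially the same route as the paper: both identify $H$ modulo $2$ with the symmetric difference of the fundamental circles of the edges of $T$ with respect to $T'$, interchange the resulting double sum over the finite cut $S$ (legitimate by \Cref{sumwelldefined}), and reduce everything to the fact that a topological circle meets a finite cut in an even number of edges. You merely spell out two points the paper dismisses as clear, namely the edge-disjointness needed for the circles $D_f$ to be well defined and the parity argument for $\card{D_f \cap S}$.
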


\begin{proof}
Denote by $K_S$ the set of fundamental circles that contain edges in $S$ for some finite cut $S$ and denote by $1_K$ the indicator function of the edge set of a given circle $K$. Clearly every such circle contains an even number of edges in $S$ and thus
\begin{align*}
\vert S \cap H \vert 
\equiv \sum_{e \in S}  \sum_{K \in K_S} 1_K(e) 
\equiv \sum_{K \in K_S} \sum_{e \in S} 1_K(e)
\equiv \sum_{K \in K_S} \vert K \cap S \vert 
\equiv 0 \mod 2 
\end{align*}
because all the sums are finite by \Cref{sumwelldefined}.
\end{proof}

The following result finishes the second proof step in the infinite case.

\begin{prop}
\label{treetoeuler}
Let $G$ be a locally finite graph and let $T$ and $T'$ be two edge disjoint topological spanning trees of $G$ such that $T \cap G$ is an ordinary spanning tree of $G$. Then $G$ has an end faithful spanning subgraph that admits a topological Euler tour.
\end{prop}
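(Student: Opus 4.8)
The plan is to take for $H$ precisely the subgraph constructed in \Cref{sumevencuts}, namely the one consisting of all edges of $T$ together with those edges of $T'$ that lie in an odd number of the fundamental circles of edges of $T$, and to show that this $H$ is simultaneously spanning, connected, end faithful and Eulerian. Three of these come essentially for free. Since $H$ contains every edge of $T$, it contains $T \cap G$, which by hypothesis is an ordinary spanning tree of $G$ and hence, by \Cref{treetoptree} (its closure being the topological spanning tree $T$), an end faithful spanning tree. Thus $H$ is connected, spanning and locally finite, and \Cref{subtree} immediately yields that $H$ is end faithful. It therefore remains only to verify that $H$ admits a topological Euler tour, for which I would invoke \Cref{eulerian}: it suffices to show that every finite cut of $H$ is even.

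The core of the argument, and the step I expect to be the main obstacle, is to reduce finite cuts of $H$ to finite cuts of $G$ so that \Cref{sumevencuts} becomes applicable. Given a finite cut of $H$, write it as $E_H(A,B)$ for a bipartition $(A,B)$ of $V$. I claim that $E_G(A,B)$ is then also finite; granting this, \Cref{sumevencuts} gives that $|E_G(A,B) \cap H| = |E_H(A,B)|$ is even, as required. To prove the claim I would argue by contradiction using end faithfulness. Suppose $E_H(A,B)$ is finite while $E_G(A,B)$ is infinite. Deleting the finitely many edges $E_H(A,B)$ leaves $H$ with only finitely many components, each lying wholly in $A$ or wholly in $B$. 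Since $E_G(A,B)$ is infinite and $G$ is locally finite, by pigeonhole infinitely many of its edges have their $A$-endpoint in one fixed $A$-component and their $B$-endpoint in one fixed $B$-component. Applying the Star-Comb-\Cref{starcomb} inside these two (locally finite, connected) components I would extract a ray $\gamma_A \subseteq H$ with vertices in $A$ and a ray $\gamma_B \subseteq H$ with vertices in $B$, and, after successively passing to subsequences on a common index set, an infinite family of distinct crossing edges $a_n b_n \in E_G(A,B)$ whose endpoints $a_n$ lie on $\gamma_A$ and $b_n$ on $\gamma_B$.

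This configuration forces a contradiction. On the one hand, any finite cut of $G$ separating a tail of $\gamma_A$ from a tail of $\gamma_B$ would have to contain all but finitely many of the distinct edges $a_n b_n$, which is impossible; hence $\gamma_A$ and $\gamma_B$ lie in the same end of $G$. On the other hand, in $H$ they are separated by the finite cut $E_H(A,B)$ and so lie in different ends of $H$. This contradicts the end faithfulness of $H$, which identifies the end spaces of $H$ and $G$ and in particular places two rays of $H$ in a common end of $H$ exactly when they lie in a common end of $G$. The claim follows, so every finite cut of $H$ is even; by \Cref{eulerian} the graph $H$ is Eulerian, and together with the properties established in the first paragraph $H$ is the desired end faithful spanning subgraph of $G$ admitting a topological Euler tour.
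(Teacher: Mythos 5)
Your proposal is correct and takes essentially the same route as the paper: the same graph $H$ from \Cref{sumevencuts}, connectedness from $T \subseteq H$, end faithfulness via \Cref{treetoptree} and \Cref{subtree}, and \Cref{eulerian} applied to finite cuts of $H$. The only difference is that where the paper simply asserts that an end faithful spanning tree meets every infinite cut of $G$ in infinitely many edges (so that finite cuts of $H$ come from finite cuts of $G$), you actually prove this claim in contrapositive form by a star--comb argument; the sole inaccuracy is that \Cref{starcomb} gives teeth joined to the spine by disjoint paths rather than teeth lying on the ray $\gamma_A$ itself, a harmless slip since those disjoint comb paths together with the crossing edges still provide the infinitely many disjoint $\gamma_A$--$\gamma_B$ connections your contradiction requires.
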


\begin{proof}
Let $H$ be the graph of \Cref{sumevencuts}. Then $H$ is connected because $T \subseteq H$ and it is end faithful by \Cref{subtree}.

Furthermore $H$ contains an even number of edges of every finite cut of $G$. Since $T$ is an end faithful spanning tree it contains an infinite number of edges of every infinite cut of $G$ and thus so does $H$. Hence every finite cut of $H$ is even and so $H$ is Eulerian. 
\end{proof}

In the third proof step for finite graphs we can use the following result by Harary and Nash-Williams and the fact that $H$ is Eulerian.

\begin{prop}[\citet{0136.44704}]
$L(G)$ is Hamiltonian if and only if there is a tour in $G$ which includes at least one endpoint of each edge.
\end{prop}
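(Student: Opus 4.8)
The plan is to exploit the basic dictionary between $L(G)$ and $G$: the vertices of $L(G)$ are the edges of $G$, and two of them are adjacent in $L(G)$ precisely when the corresponding edges of $G$ share an endpoint. Under this translation a Hamiltonian cycle of $L(G)$ is exactly a cyclic arrangement $e_1, e_2, \ldots, e_m, e_1$ of \emph{all} edges of $G$ in which any two cyclically consecutive edges meet in a common vertex. Both directions of the equivalence will then be obtained by passing back and forth between such edge-arrangements and dominating closed trails (the ``tours'') of $G$, where by a tour I mean a closed trail meeting at least one endpoint of every edge.

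For the forward direction I would start from a Hamiltonian cycle of $L(G)$, read off the induced cyclic arrangement $e_1, \ldots, e_m$ of the edges of $G$, and for each consecutive pair $e_i, e_{i+1}$ choose a common vertex $x_i$ (indices modulo $m$). The sequence $x_1 x_2 \cdots x_m x_1$ is a closed walk in $G$, since $x_{i-1}$ and $x_i$ are both endpoints of $e_i$ and hence either coincide or are joined by $e_i$. Because $e_i$ can only be traversed at step $i$ and the $e_i$ are pairwise distinct, no edge of $G$ is used twice, so the walk is a closed trail; and every edge $e_i$ has its endpoint $x_i$ on this trail, so the trail meets an endpoint of each edge. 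This is precisely the required tour.

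For the converse I would take a tour $W$, written as a closed trail $x_0 f_1 x_1 f_2 \cdots f_t x_t$ with $x_t = x_0$, and grow it into a Hamiltonian cycle of $L(G)$. The trail edges $f_1, \ldots, f_t$ already form a cyclic adjacency chain in $L(G)$. Every edge $e$ not on $W$ has an endpoint lying on $W$; I would assign each such $e$ to one fixed trail-vertex endpoint and then, between the two trail edges $f_i$ and $f_{i+1}$ that meet at $x_i$, splice in all the non-trail edges assigned to $x_i$. All of these edges contain $x_i$, so they are pairwise adjacent in $L(G)$ and adjacent to both $f_i$ and $f_{i+1}$, and the splice preserves adjacency of consecutive terms. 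The resulting cyclic sequence visits each edge of $G$ exactly once and closes up, yielding the desired Hamiltonian cycle of $L(G)$.

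The main obstacle is the bookkeeping in the converse. When the trail passes through a vertex $x$ several times, the non-trail edges incident to $x$ must be distributed among these passes so that no edge is spliced in twice, and one must verify that the final cyclic sequence is a genuine cycle of $L(G)$ rather than a closed walk with repeated vertices. Controlling these incidences carefully, together with the degenerate small-edge cases that the classical statement excludes, is where the real work lies; the overall strategy, by contrast, reduces entirely to the short translation between Hamiltonian cycles of $L(G)$ and dominating closed trails of $G$ sketched above.
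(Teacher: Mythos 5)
The paper offers no proof of this proposition at all: it is quoted as a known classical result of Harary and Nash-Williams and used as a black box in the finite-case outline of the Hamiltonicity argument, so there is no in-paper proof to compare yours against. Your argument is the standard proof of that classical theorem, and it is correct. The forward direction is complete as written: both $x_{i-1}$ and $x_i$ are endpoints of $e_i$, so each step of the walk either stays put or traverses $e_i$, which makes the walk a closed trail, and it meets the endpoint $x_i$ of every edge $e_i$. For the converse, the splicing works, and the bookkeeping you flag as the main obstacle is lighter than you fear: assign each non-trail edge to exactly one occurrence on $W$ of one of its endpoints lying on $W$ (any such choice will do, since all edges inserted at an occurrence of $x_i$ contain $x_i$ and are therefore mutually adjacent in $L(G)$ and adjacent to both $f_i$ and $f_{i+1}$). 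That each vertex of $L(G)$ appears exactly once in the resulting cyclic sequence is then automatic---trail edges appear once because $W$ is a trail, non-trail edges once because each is assigned to a single occurrence---so the sequence is a genuine Hamiltonian cycle rather than a closed walk with repetitions. The only remaining caveats are the degenerate cases (a graph with fewer than three edges, or a tour consisting of a single dominating vertex), which the classical statement implicitly excludes, as you note.
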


For locally finite graphs a similar result is known. 

\begin{defi}
A \emph{closed dominating trail} is a topological Euler tour of a connected subgraph $H$ of $G$ which contains at least one endvertex of each edge of $G$.
\end{defi}

\begin{prop}[\citet{brewfunk}]
\label{eulertohamilton}
Let $G=(V,E)$ be a locally finite graph. If $\overline G$ contains a closed dominating trail which is injective on the ends of $G$ then $L(G)$ is Hamiltonian.
\end{prop}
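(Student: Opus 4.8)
The plan is to carry the classical Harary--Nash-Williams argument into the topological setting: a dominating closed trail in $G$ will be turned, by passing to the ``dual'' parametrisation, into a topological circle in $\overline{L(G)}$ meeting every vertex and every end of $L(G)$ exactly once, which is precisely a Hamiltonian circle of the (locally finite) graph $L(G)$. Let $\sigma\colon S^1 \to \overline G$ be the given trail, a topological Euler tour of a connected dominating subgraph $H$ that is injective on $\Omega(G)$. First I would fix for each edge $e \in E(G)\setminus E(H)$ one endpoint of $e$ in $V(H)$---which exists since $H$ is dominating---and call it the \emph{host} of $e$; for $v \in V(H)$ let $A_v$ be the (finite, as $G$ is locally finite) set of edges hosted at $v$.

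Recall that the vertices of $L(G)$ are the edges of $G$, two of them being adjacent exactly when they share an endpoint. The tour $\sigma$ traverses the edges of $H$ in a cyclic order in which consecutive edges share a vertex, so dually each edge of $H$ becomes a vertex of $L(G)$ visited once, and each passage of $\sigma$ through a vertex $v$ (entering on $e$, leaving on $f$) becomes the traversal of the $L(G)$-edge $\set{e,f}$. To absorb the edges outside $H$ I would, for each $v$ with $A_v \neq \emptyset$, single out one preimage $t_v \in \sigma^{-1}(v)$ and replace the direct transition there by the fan $e\,g_1\cdots g_m\,f$ through $v$, where $A_v = \set{g_1,\dots,g_m}$; since all of $e,g_1,\dots,g_m,f$ share $v$, this is a genuine path in $L(G)$. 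Concretely I would build a circle $\tilde S^1$ together with a monotone collapsing map $\pi\colon \tilde S^1 \to S^1$ that blows up each chosen $t_v$ into a closed arc $I_v$ (there are only countably many, so with summable arc lengths $\tilde S^1 \cong S^1$), and define $\tau\colon \tilde S^1 \to \overline{L(G)}$ to follow the dual traversal off the arcs and to trace the corresponding fan on each $I_v$.

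By construction the image of $\tau$ meets every vertex of $L(G)$ exactly once: the edges of $H$ are visited during the dual traversal, the remaining edges during the insertions, each being hosted and hence inserted exactly once, and no $L(G)$-edge is used twice. To see that $\tau$ visits every end exactly once I would invoke the standard identification $\Omega(L(G)) \cong \Omega(G)$ for locally finite $G$ (a walk along consecutive edges turns a ray of $G$ into a ray of $L(G)$ and back), under which the injectivity of $\sigma$ on $\Omega(G)$ transfers to $\tau$ on $\Omega(L(G))$ and every end of $L(G)$ is attained. Since $\tilde S^1$ is compact and $\overline{L(G)}$ is Hausdorff, once $\tau$ is shown continuous and injective it is automatically a homeomorphism onto its image, i.e.\ the desired topological circle.

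Continuity of $\tau$ at inner points of $L(G)$-edges and at vertices is local and immediate from the finiteness of each fan, so \textbf{the one genuinely delicate point is continuity at the ends}. For the point $s_\omega \in \tilde S^1$ mapping to the end of $L(G)$ corresponding to $\omega$, I must show that basic open neighbourhoods of $\omega$ in $\overline{L(G)}$ pull back to neighbourhoods of $s_\omega$. Using continuity of $\sigma$ at the preimage of $\omega$ in $S^1$, this reduces to two compatibility checks: first, that a host $v$ lying deep inside a given neighbourhood of $\omega$ forces its whole fan $A_v$ into the matching neighbourhood of $\omega$ in $L(G)$ (which holds because edges at a vertex near $\omega$ are themselves near $\omega$); and second, that the blown-up arcs $I_v$ accumulate only at points mapping to ends, so that small neighbourhoods of $s_\omega$ in $\tilde S^1$ still map inside the prescribed neighbourhood. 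Arranging the arc lengths and the choice of the $t_v$ so that this matching with the end-topology of $\overline{L(G)}$ goes through is the heart of the argument; everything else is a faithful topological rendering of the finite construction.
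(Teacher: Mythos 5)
The paper contains no proof of this proposition: it is quoted from \citet{brewfunk} and used as a black box in the proof of \Cref{linegraph}, so there is no in-paper argument to compare yours against. Judged on its own merits, your sketch takes the right route---it is the natural topological rendering of the Harary--Nash-Williams fan construction: dualise the Euler tour of the dominating subgraph $H$ into a closed walk through the $L(G)$-vertices corresponding to $E(H)$, and absorb each remaining edge of $G$ into a fan inserted at one chosen passage through its host vertex. Two points would need firming up in a complete write-up. First, the identification $\Omega(L(G)) \cong \Omega(G)$ is itself a nontrivial lemma that must be proved or cited; you use it both to transfer injectivity on ends and, implicitly, throughout the continuity argument, so it cannot simply be waved through as standard. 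Second, surjectivity onto the ends of $L(G)$ is obtained most cleanly not from that identification but from compactness: the image of $\tau$ is compact, hence closed in $\overline{L(G)}$, and contains every vertex of $L(G)$, hence every end. Your two compatibility checks for continuity at ends are exactly the right ones, and both do hold: if $v$ lies deep inside the region of a finite cut around $\omega$ (say, $v$ and all its neighbours lie in that region), then every edge incident to $v$---in particular the whole fan $A_v$---corresponds to an $L(G)$-vertex inside the matching region of $\overline{L(G)}$; and the blown-up arcs $I_v$ can accumulate only at preimages of ends, because infinitely many distinct host vertices can accumulate in $\overline G$ only at an end. With these details filled in the proof is complete; note that no special ``arranging'' of the choices $t_v$ is actually required, only summability of the arc lengths so that $\tilde S^1$ is again a circle.
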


Note that the tour is required to be injective at ends. Even though the Eulerian subgraph that we constructed in the second proof step is end faithful a badly chosen Euler tour may visit an end more than once. However, the following result by Georgakopoulos can be used to find a topological Euler tour which is injective at the ends of $H$ and thus (since $H$ is end faithful) also at the ends of $G$.

\begin{theo}[\citet{pre05498547}]
\label{injectiveeuler}
If a locally finite multigraph has a topological Euler tour, then it also has one that is injective at ends.
\end{theo}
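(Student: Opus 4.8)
The plan is to prove the statement not by repairing the given tour edge by edge, but by exploiting its only real consequence through \Cref{eulerian}: the hypothesis that $G$ has a topological Euler tour is equivalent to every finite cut of $G$ being even. Granting this, I would construct a \emph{fresh} topological Euler tour that is injective at ends as a limit of ordinary finite Euler tours of an exhausting sequence of finite contractions, arranging the construction so that in the limit each end is approached by exactly one passage of the tour. Since a locally finite graph may have uncountably many ends, no enumeration of the ends is available, so all bookkeeping must be local to the finite stages and the end behaviour must be controlled purely through the nested neighbourhoods in $\overline G$.

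First I would fix an exhausting sequence of finite vertex sets $V_0 \subseteq V_1 \subseteq \cdots$ with $\bigcup_n V_n = V$, chosen (as in the earlier constructions of this paper) so that each ``annulus'' $G[V_{n+1}\setminus V_n]$ is connected around each cluster; set $G_n := G\cont{V_n}$. Every finite cut of $G_n$ is a finite cut of $G$ and hence even; in particular every vertex of $G_n$ (including each contracted vertex, whose degree is the size of a finite boundary cut) has even degree, so each $G_n$ is a finite connected even graph and carries an ordinary Euler tour $\tau_n$. Speaking of $\tau_{n+1}$ as \emph{refining} $\tau_n$ (its image in $G_n$, after contracting the newly revealed vertices, is $\tau_n$), a compatible refining sequence has a limit $\sigma$ which is a topological Euler tour of $G$: each edge is traced exactly once by construction, and continuity into $\overline G$ is checked exactly as the connectivity of the limit in \Cref{limtree}.

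The core of the argument, and the step I expect to be the main obstacle, is controlling how often the limit reaches each end. For an end $\omega$ the clusters $C^n_\omega$ (the component of $G - V_n$ containing the $\omega$-rays) are nested with finite, hence even, boundary cuts $S_n$, and $\tau_n$ visits the contracted vertex $c^n_\omega$ exactly $|S_n|/2$ times, one for each maximal excursion of the tour into $C^n_\omega$; the number of visits of $\sigma$ to $\omega$ is the number of nested chains of excursions $E_1 \supseteq E_2 \supseteq \cdots$ that penetrate every $C^n_\omega$. I would build the $\tau_n$ inductively so that at each refinement exactly one excursion into every newly split cluster continues inward while the others turn back inside the annulus. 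This is consistent with the even boundary cuts: the single continuing excursion consumes two outer half-edges of $S_n$, and the remaining $|S_n|-2$ outer half-edges pair off into returning trails, while connectedness of the annulus lets that one excursion absorb all inner dips into $C^{n+1}_\omega$ as a single trail. The genuine difficulty is that $\tau_n$ is one globally connected tour simultaneously covering \emph{all} clusters of $G_n$, so the per-cluster ``one surviving excursion'' prescriptions must be realised by a single connected Euler tour; I expect this to need an extension lemma for Euler tours of finite contractions that prescribes the transition pattern at each contracted vertex, together with a K\"onig/compactness selection of the compatible refining sequence.

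Finally, with such a refining sequence in hand, injectivity at ends is immediate from the ``exactly one surviving chain per end'' property, so it remains to verify that a surviving excursion that enters every $C^n_\omega$ really converges to $\omega$ rather than to some other end; this follows from the Star-Comb-\Cref{starcomb}, since the teeth of the comb formed by the excursion's deepening segments converge to the end in which its spine lies, which by construction is $\omega$. This closes the verification that $\sigma$ is a topological Euler tour injective at ends.
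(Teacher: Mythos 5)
First, a point of comparison: the paper does not prove this statement at all --- \Cref{injectiveeuler} is imported verbatim from \citet{pre05498547} and used as a black box in the proof of \Cref{linegraph}. So there is no in-paper proof to measure your argument against; what follows assesses it on its own terms. Your architecture is the natural one: reduce to the even-cut condition via \Cref{eulerian}, take Euler tours $\tau_n$ of the finite contractions $G\cont{V_n}$ (all of whose degrees, including those of contracted vertices, are indeed even), pass to a compatible refining limit as in \Cref{limtree}, and control $\sigma^{-1}(\omega)$ through nested chains of excursions. Your reduction of injectivity to the property ``at each refinement, all visits to each newly revealed contracted vertex lie inside the expansion of a single visit of $\tau_n$'' is also correct, granted a parametrisation with summable edge lengths so that nested excursion intervals shrink to points (the closing appeal to \Cref{starcomb} is unnecessary: the clusters $C^n_\omega$ form a neighbourhood basis of $\omega$, so any surviving chain converges to $\omega$ automatically).

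The genuine gap is the step you explicitly defer: the ``extension lemma'' prescribing transition patterns at contracted vertices. This is not a routine technicality to be outsourced to a K\"onig argument --- it is the entire content of the theorem, and in the form your construction needs it, it is false. Two concrete obstructions. (i) Compatibility alone can fail: let $V_n=\set z$ and let the unique component of $G-V_n$ consist of two vertices $x,y$ joined by two parallel edges, with four $zx$-edges and four $zy$-edges. Then $G\cont{V_n}$ is a two-vertex multigraph with eight parallel edges, and the Euler tour $\tau_n$ whose four transitions at the contracted vertex each pair a $zx$-edge with a $zy$-edge is legitimate; but no Euler tour of the decontracted graph induces it, since an excursion entering at $x$ and leaving at $y$ must use an odd number of the two internal edges, so at most two of the four transitions can be mixed. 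Hence greedy refinement is impossible; and if you instead first extract some compatible sequence by compactness, you have surrendered exactly the freedom needed to impose your one-surviving-excursion prescription afterwards. (ii) Even when a compatible refinement exists, your prescription can be parity-obstructed: take annulus $\set{x,y}$, old pairing $(e_1,f_1),(e_2,f_2)$ with the $e_i$ attached at $x$ and the $f_i$ at $y$, and two deeper clusters $c_1,c_2$, each joined to $x$ by two edges and to $y$ by two edges. An excursion from $x$ to $y$ uses an odd number of edge-ends at $x$, whereas forcing all four edges at $c_j$ into one excursion contributes an even number there; with no annulus edges the requirements are inconsistent. Connectivity of the annulus repairs this particular example, but you neither state nor prove a lemma guaranteeing realisability in general, and the two examples together show that any true version must coordinate the transition pairings across several levels of the exhaustion --- choosing $\tau_n$ with foresight about cluster interiors it has not yet seen --- rather than by the one-step, per-cluster correction you describe. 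That coordination is precisely the substance of Georgakopoulos' proof, and it is missing here.
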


Since a topological Euler tour in a spanning connected subgraph clearly is a dominating closed trail this completes the proof of \Cref{linegraph}.

\section{The counterexample of Aharoni and Thomassen}

We mentioned earlier that \Cref{onetreecountable,twotreescountable} do not remain true if the graph has uncountably many ends. The following construction due to Aharoni and Thomassen \cite{MR982868} shows that for every $k \in \mathbb N$ there is a $k$-edge connected locally finite graph $G$ such that there is no edge disjoint pair consisting of a connected spanning subgraph of $G$ and an arcwise connected subspace of $\overline G$ containing all vertices of $G$. This clearly implies that the statements of \Cref{onetreecountable,twotreescountable} are not true for graphs with edge connectivity $\geq 6$ and uncountably many ends because both of the theorems give us such a pair.

For the construction let $k \in \mathbb N$ be fixed and choose a finite  $k$-edge connected graph $H$ such that there is a set $S$ of $k$ vertices whose pairwise distance is at least $k$.

Let $G_1 = H$ and let $V_0 = \emptyset$. In order to define $G_{n+1}$ start with $G_n$ and let $G_n' = G_n - V_{n-1}$. Subdivide every edge $e$ of $G_n'$ by adding $p_k(e)$ inner vertices where $p_k(e)$ be the number of paths of length $k$ in $G_n'$ containing $e$. We obtain a subdivision of $G_n$ whose vertex set is denoted $V_n$.

Now add a copy of $H$ for every path of length $k$ in $G_n'$ such that all the copies are mutually disjoint. Identify each vertex of a copy of $S$ with an inner point of a different edge in the corresponding path in $G_n'$. The identification is done in a way such that the copies of $H$ remain disjoint.

Define a limit graph $G$ on the vertex set $V = \bigcup_{n \geq 1} V_n$ by connecting a vertex $v \in V_n$ to all of its neighbours in $G_{n+1}$.

An inductive argument shows that $G_n$ and hence also $G$ is $k$-edge connected. It is also easy to see that the edge set of the subdivision of a path of length $k$ in $G_n'$ separates the corresponding copy of $H$ from $V_n$ and from al other copies of $H$ added in step $n+1$.

To see that there is no edge disjoint pair of a connected subgraph of $G$ and an arcwise connected subspace of $\overline G$ containing all vertices of $G$  consider a path $P$ connecting two vertices in the copy of $S$ in $V_1$. Every connected spanning subgraph of $G$ must contain such a path. There is a maximal $n$ such that this path intersects $V_n$. This implies that $P$ contains the subdivision of some path of length $k$ in $G_n'$ and thus a finite cut.

This shows that \Cref{onetreecountable,twotreescountable} cannot be extended to graphs with uncountably many ends. However, it might still be true to extend \Cref{linegraph} to such graphs.

\begin{prop}
\label{ahathom_hamiltonian}
Let $G$ be a graph constructed according to the above procedure. Then $L(G)$ is Hamiltonian.
\end{prop}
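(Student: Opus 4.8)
The plan is to avoid the fundamental--circle construction of \Cref{treetoeuler} entirely: that route needs two edge disjoint topological spanning trees (one of them with a connected graph part), and the whole point of this example is that $G$ admits no such pair --- it is exactly the obstruction ruling out \Cref{onetreecountable,twotreescountable}. Instead I would build a \emph{single} spanning subgraph directly. Concretely, it suffices to produce an end faithful, connected, spanning subgraph $H^*$ of $G$ in which every finite cut is even. Indeed, such an $H^*$ is Eulerian by \Cref{eulerian} (infinite cuts are harmless there), so it has a topological Euler tour; by \Cref{injectiveeuler} this tour may be chosen injective at the ends of $H^*$, and since $H^*$ is end faithful this is the same as injective at the ends of $G$; being spanning and connected, the tour is a closed dominating trail, so \Cref{eulertohamilton} yields that $L(G)$ is Hamiltonian.

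To construct $H^*$ I would exploit the single structural feature driving the whole example: by construction every inner (subdivision) vertex created when passing from $G_n$ to $G_{n+1}$ is the point at which exactly one copy of $H$ is glued, i.e.\ \emph{every subdivision vertex is an attachment vertex of a unique child copy}. Using that each copy of $H$ is $k$-edge connected with $k\ge 4$ and hence supereulerian (a finite $4$-edge connected graph has a spanning connected subgraph all of whose degrees are even), I would fix in each copy such a spanning connected even subgraph $D$, and then declare a subdivided edge (a whole ``chain'') to lie in $H^*$ precisely when the edge it subdivides lies in the chosen $D$ of its copy.

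With this choice the parity and covering conditions hold almost for free. A subdivision vertex receives $0$ or $2$ chain edges from its parent copy together with an \emph{even} number of edges from the copy of $H$ glued onto it, so its degree is even; an ordinary vertex of a copy has degree $\deg_D$, again even. Since subdivision vertices always survive in $H^*$ (their child copy keeps them), every edge of $G$ has an endpoint in $H^*$ and $H^*$ is spanning. Most importantly, the finite cut that peels off the pocket behind one child copy meets each of its $k$ attaching chains in $0$ or $2$ edges, hence is even; and every finite cut of $H^*$ can be written as a symmetric difference of vertex stars (even, since all degrees are even) and such pocket cuts, so every finite cut of $H^*$ is even. End faithfulness then follows from \Cref{subtree} once $H^*$ is connected, by selecting inside $H^*$ a spanning tree that descends exactly once into each pocket.

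The step I expect to be the real obstacle is \textbf{global connectivity}, and it is precisely here that the freedom in choosing $D$ must be spent. A child copy glued along a length-$k$ path $\pi_1\pi_2\cdots\pi_k$ is attached to its parent only through those chains $\pi_i$ that were put into $H^*$, that is, through the edges of $D$ lying on that path; since \emph{every} length-$k$ path carries a child, $H^*$ is connected only if the even subgraph $D$ of each copy meets every path of length $k$ (equivalently, $E(H)\setminus D$ contains no path of length $k$). Thus the finite heart of the argument is the lemma that a $k$-edge connected graph $H$ admits a spanning connected subgraph $D$ with all degrees even whose complement contains no path of length $k$. The abundance of edge disjoint paths supplied by $k$-edge connectivity, together with the hypothesis that the $k$ vertices of $S$ have pairwise distance at least $k$, should give enough room to choose the even ``correction'' $E(H)\setminus D$ with short components; granting this lemma, the construction above produces the required $H^*$ and hence the Hamiltonicity of $L(G)$.
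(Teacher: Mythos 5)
Your global reduction --- build an end faithful, connected, spanning subgraph all of whose finite cuts are even, then apply \Cref{eulerian}, \Cref{injectiveeuler} and \Cref{eulertohamilton} --- is exactly the route the paper takes, and you correctly isolate connectivity as the only real difficulty. But the finite lemma you defer it to is \emph{false}, so the gap is fatal rather than technical. For any spanning subgraph $D$ of $H$ with all degrees even, the complement $J = E(H)\setminus D$ has odd degree precisely at the odd-degree vertices of $H$; decomposing $J$ into edge disjoint open trails (whose endpoints are exactly those vertices) and closed trails shows that, as soon as $H$ has odd-degree vertices at all, $J$ contains a path joining two \emph{distinct} odd-degree vertices of $H$. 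Hence if $H$ has an odd-degree vertex whose distance to every other odd-degree vertex is at least $k$, then every admissible $D$ leaves a path of length at least $k$ in the complement. Such $H$ meeting the requirements of the construction exist: take a $6$-regular, $6$-edge connected, vertex transitive graph of girth at least $7$ and of large order (a suitable Cayley graph, say) and delete one edge $xy$. The result is $5$-edge connected, contains $5$ vertices of pairwise distance at least $5$, and its only odd-degree vertices are $x$ and $y$, which now have distance at least $6$. Running the Aharoni--Thomassen construction with this $H$ and $k=5$, your $H^*$ is disconnected no matter how the $D$'s are chosen: already at the first level there is a path $P$ of length $k$ in $E(H)\setminus D$, the subdivision of $P$ is a cut separating the copy of $H$ glued onto $P$ from everything else, and $H^*$ contains no edge of this cut, so that copy together with all its descendants forms a separate component. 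So it is not merely that your lemma is unproved; the whole scheme of deciding about each chain as a whole, independently inside each copy, provably cannot work.

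The paper's proof circumvents exactly this obstruction by making the choice dynamic rather than static. It fixes two gadgets in $H$: a connected spanning even subgraph $K$, and for $u,v \in S$ a connected spanning subgraph $K_{uv}$ whose only odd-degree vertices are $u$ and $v$. The Eulerian subgraph is built level by level: given $L_n \subseteq G_n$, one starts with its subdivision in $G_{n+1}$ and repeatedly \emph{flips} the edges of a path running through the forest of subdivided unused edges between two attachment vertices $u,v$ of a single child copy, inserting $K_{uv}$ into that copy to absorb the parity defect created at $u$ and $v$. Two features of this are essential and are unavailable in your framework: a chain may end up only partially contained in the Eulerian subgraph (the flipped path ends in the middle of chains), and the child copies themselves act as parity sinks via $K_{uv}$ --- which is precisely what neutralises the $T$-join obstruction above. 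The remaining work in the paper (pending and unsettled copies, pivot vertices) is the bookkeeping needed to schedule these flips so that each level stays connected, even and spanning; a disjoint-paths argument then gives end faithfulness of the limit. If you want to keep your cleaner static description, you would have to add your lemma as a hypothesis on $H$; since the proposition quantifies over all graphs produced by the construction, some correction mechanism of this dynamic kind is unavoidable.
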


\begin{proof}
While it is not possible to find a suitable spanning tree packing we can still find a spanning end faithful Eulerian subgraph of $G$. Hence $L(G)$ is Hamiltonian by \Cref{injectiveeuler} and \Cref{eulertohamilton}.

Let $H$ be the $k$-edge connected graph used in the construction. Since $k \geq 4$ we know that $H$ contains a pair of edge disjoint spanning trees $T_1$ and $T_2$.  

It follows that $H$ contains a connected spanning Eulerian subgraph. Simply take all edges which are contained in an odd number of fundamental cycles with respect to $T_1$. It is easy to see that every vertex even degree in the resulting graph and it is connected because it contains all of $T_2$. Observe that flipping all edges---that is, remove an edge if it was present, add it if it was not---of  the unique $u$-$v$-path in $T_1$ gives a connected spanning subgraph of $H$ which contains a Eulerian $u$-$v$-trail, that is, the only vertices with odd degree in the subgraph are $u$ and $v$.

Now let $K$ be a connected spanning Eulerian subgraph of $H$ and let $K_{uv}$ be a connected spanning subgraph containing a Eulerian $u$-$v$-trail. We will use these graphs to iteratively define connected spanning Eulerian subgraphs $L_n$ of $G_n$.

Obviously we can choose  $L_1 = K$. Now we would like to construct $L_{n+1}$ out of $L_n$. Without loss of generality assume that $G_n - L_n$ does not contain any cycles. Otherwise take the graph obtained from $L_n$ by adding cycles as long as there is a cycle in $G_n$ whose edge set is contained in $G_n - L_n$. 

Let $L_{n+1}^0$ be the subdivision of $L_n$ contained in $G_{n+1}$, that is, $L_{n+1}^0$ is the graph obtained from $L_n$ by subdividing every edge $e$ of $G_n'$ by adding $p_k(e)$ inner vertices. The definitions of $G_n'$ and $p(e)$ are the same as in the construction of the graph $G_{n+1}$. Clearly, this is a connected subgraph of $G_{n+1}$ spanning all of $V_{n-1}$ in which all degrees are even.

So all we need to do is include the remaining vertices without making any degree odd or disconnecting the graph. We will now define a sequence $L_{n+1}^i$ of subgraphs of $G_{n+1}$ such that each $L_{n+1}^i$ is connected and has only vertices with even degree.

Denote by $\mathcal H$ the set of all copies of $H$ which have been added when we constructed $G_{n+1}$ out of $G_n$. For $H' \in \mathcal H$ let $S(H')$ be the corresponding copy of $S$. We call $H' \in \mathcal H$ \emph{pending}, if $H' \cap L_{n+1}^i = \emptyset$. In this case we also call vertices of $S(H')$ pending.

Let $F_{n+1}$ be the subdivision of  $G_n - L_n$ in $G_{n+1}$. Note that $F_{n+1}$ must be a forest because we assumed $G_n - L_n$ to be acyclic. Throughout the construction we will only make changes to $L_{n+1}^i$ in $F_{n+1}$ and in copies $H' \in \mathcal H$. In particular $L_{n+1}^0$ will be a subgraph of each $L_{n+1}^i$.

Call $H' \in \mathcal H$ \emph{unsettled} if every vertex of $H' \cap L_{n+1}^i$ lies on a path in $F_{n+1}$ connecting two pending vertices. In this case we also call vertices of $S(H')$ unsettled. Call a vertex \emph{settled} if it is not unsettled. Notice that every pending vertex is unsettled while the converse need not necessarily be true. 

Also notice that every vertex not contained in $F_{n+1}$ must be settled. This immediately implies that for any unsettled $H'$ and any two vertices in $S(H')$ there must be a path in $F_{n+1}$ connecting the two.

 Denote by $F_{n+1}^i$ the convex hull of the set of unsettled vertices in $F_{n+1}$, that is, $F_{n+1}^i$ is the subgraph of $F_{n+1}$ induced by all unsettled vertices and all vertices which lie on a path between two unsettled vertices in $F_{n+1}$.

To construct $L_{n+1}^{i+1}$ out of $L_{n+1}^i$ consider a leaf $v$ of $F_{n+1}^i$. Clearly, this is an unsettled vertex. 

First we claim that $v \notin L_{n+1}^i$. Indeed, if $v$ was contained in $L_{n+1}^i$ then it would have to lie on a path between two pending vertices. Since any pending vertex is unsettled this would be a path in $F_{n+1}^i$, hence $v$ would not be a leaf.

There is some $H' \in \mathcal H$ such that $v \in H'$. Choose any other vertex $u \in S(H')$ and let $P$ be the unique $u$-$v$-path in $F_{n+1}$. Then the graph $L_{n-1}^{i+1}$ is obtained from $L_{n-1}^i$ by flipping all edges along $P$ and adding a copy of the graph $K_{uv}$ in $H'$.

Clearly this makes $v$ a settled vertex. Since there is no change outside of $F_{n+1}^i \cup H'$ it follows that $F_{n+1}^{i+1}$ has strictly less vertices than $F_{n+1}^i$. Furthermore it is easy to see that every vertex in $L_{n+1}^{i+1}$ has even degree.

To show that $L_{n+1}^{i+1}$ is connected we need the following definition. A \emph{pivot vertex} is a vertex of $F_{n+1}$ which is incident to an edge in $L_{n+1}^i$ and an edge which is not contained in $L_{n+1}^i$. We now claim that every pivot vertex is connected to some vertex of $G_n$ by a path in $L_{n+1}^{i} - F_{n+1}^{i}$.

This is true for $L_{n+1}^0$ because the only pivot vertices were already contained in $G_n$. Inductively assume that it was true for $L_{n+1}^i$. Since no changes are made to $L_{n+1}^{i} - F_{n+1}^{i}$ it is clearly still true for every pivot vertex of $F_{n+1}^i$ after the modifications---$L_{n+1}^i-F_{n+1}^i \subseteq L_{n+1}^{i+1}-F_{n+1}^{i+1}$, so any path in the former graph is also a path in the latter graph. 

The only possible pivot vertices in $F_{n+1}^{i+1}$ which were not pivot vertices in $F_{n+1}^i$ are $u$ and $v$. By construction $u$ is connected to $v$ by a path outside of $F_{n+1}^{i+1}$. Now follow the $u$-$v$-path in $F_{n+1}$ starting at $v$. If we reach some vertex of $G_n$ before the first unsettled vertex we are done. Otherwise the first unsettled vertex must be a leaf in $F_{n+1}^{i+1}$. This implies that it is not contained in $L_{n+1}^{i+1}$, hence we must have passed at least one pivot vertex along the way. The path from $v$ to the first pivot vertex $v'$ that we passed is completely contained in $L_{n+1}^{i+1}-F_{n+1}^{i+1}$. It is an easy observation that all vertices on the $v$-$v'$-path are settled---an unsettled vertex on this path would imply an unsettled leaf of $F_{n+1}^{i+1}$ on it which cannot be contained in $L_{n+1}^{i+1}$. Since $v'$ is connected to some vertex of $V_n$ by a path in $L_{n+1}^{i+1} - F_{n+1}^{i+1}$ this completes the proof of the claim.

To see that $L_{n+1}^{i+1}$ is connected simply notice that every vertex of $L_{n+1}^{i+1}$ is connected by a path in $L_{n+1}^{i+1} - F_{n+1}^{i+1}$ to either a vertex of $G_n$ or a pivot vertex. The arguments are analogous to the arguments applied to the vertex $v$ above.

We mentioned earlier that the vertex sets of the graphs $F_{n+1}^i$ are strictly decreasing in $i$. Since $F_{n+1}^0$ is finite this implies that we end up with an empty vertex set after finitely many steps. In particular, there is some $i_0$ such that $L_{n+1}^{i_0}$ has no unsettled and hence also no pending vertices. So every $H' \in \mathcal H$ contains some vertex of $L_{n+1}^{i_0}$.

Now let $L_{n+1}$ be the graph obtained from $L_{n+1}^{i_0}$ by adding a copy of $K$ in every $H' \in \mathcal H$ whose vertex set is not contained in $L_{n+1}^{i_0}$.

From the sequence $L_n$ define a subgraph $L$ of $G$ by connecting every vertex in $V_n$ to all of its neighbours in $L_{n+1}$. Clearly $L$ contains an even number of edges in every finite cut of $G$. So we only need to show that $L$ is end faithful in order to complete the proof of the proposition.

Choose two rays $\gamma_1$ and $\gamma_2$ in $L$ belonging to the same end of $G$. Then $\gamma_1$ and $\gamma_2$ contain vertices $v_1$ and $v_2$ in the same component $C$ of $G - V_n$. If we choose $n$ large enough we may assume that those vertices both lie in $V_{n+1}$. By construction of $L$ there is a path in $L[V_{n+2} \setminus V_n] = L_{n+2} - V_n$ connecting the two vertices. Repeating this argument yields infinitely many vertex disjoint paths in $L$ connecting $\gamma_1$ to $\gamma_2$.

Hence any two rays which are equivalent in $G$ are als equivalent in $L$ and thus $L$ is end faithful.
\end{proof}

The construction in the proof of \Cref{ahathom_hamiltonian} is rather specific to the example of Aharoni and Thomassen. However, the general approach may be useful in a more general setting:
\begin{itemize}[label=--]
\item decompose the graph into finite parts,
\item choose a ``good'' spanning subgraph in each of the parts, that is, one where the right vertices have odd degree,
\item combine those subgraphs to obtain an end faithful spanning Eulerian subgraph,
\item use \Cref{injectiveeuler} and \Cref{eulertohamilton}.
\end{itemize}
In fact, it is easy to imagine a proof of \Cref{linegraph} which follows the same lines. \Cref{compatible} delivers the decomposition. Once we have chosen the spanning trees in the finite parts we can define a spanning subgraph in each of the parts accordingly and combine those subgraphs to obtain an end faithful Eulerian spanning subgraph of $G$.

This suggests that it may be possible to use the same idea for a wider class of graphs satisfying the conditions of Georgakopoulos' \Cref{agelos}. However, considering the amount of technical details needed in the proofs of \Cref{linegraph} and \Cref{ahathom_hamiltonian} it is likely to be difficult.

\section*{Acknowledgements}

I would like to thank Agelos Georgakopoulos for reading  earlier versions of this paper and for suggesting several improvements.

\section*{References}

\bibliographystyle{model1b-num-names}
\bibliography{bibliography}

\begin{thebibliography}{29}
\expandafter\ifx\csname natexlab\endcsname\relax\def\natexlab#1{#1}\fi
\providecommand{\bibinfo}[2]{#2}
\ifx\xfnm\relax \def\xfnm[#1]{\unskip,\space#1}\fi
\bibitem[{Aharoni and Thomassen(1989)}]{MR982868}
\bibinfo{author}{R.~Aharoni}, \bibinfo{author}{C.~Thomassen},
  \bibinfo{title}{Infinite, highly connected digraphs with no two arc-disjoint
  spanning trees}, \bibinfo{journal}{J. Graph Theory} \bibinfo{volume}{13}
  (\bibinfo{year}{1989}) \bibinfo{pages}{71--74}.
\bibitem[{Brewster and Funk(2012)}]{brewfunk}
\bibinfo{author}{R.C. Brewster}, \bibinfo{author}{D.~Funk}, \bibinfo{title}{{On
  the hamiltonicity of line graphs of locally finite, 6-edge-connected
  graphs.}}, \bibinfo{journal}{J. Graph Theory} \bibinfo{volume}{71}
  (\bibinfo{year}{2012}) \bibinfo{pages}{182--191}.
\bibitem[{Bruhn(2004)}]{1055.05088}
\bibinfo{author}{H.~Bruhn}, \bibinfo{title}{{The cycle space of a 3-connected
  locally finite graph is generated by its finite and infinite peripheral
  circuits.}}, \bibinfo{journal}{J. Comb. Theory, Ser. B} \bibinfo{volume}{92}
  (\bibinfo{year}{2004}) \bibinfo{pages}{235--256}.
\bibitem[{Bruhn et~al.(2005)Bruhn, Diestel and Stein}]{1077.05081}
\bibinfo{author}{H.~Bruhn}, \bibinfo{author}{R.~Diestel},
  \bibinfo{author}{M.~Stein}, \bibinfo{title}{{Cycle-cocycle partitions and
  faithful cycle covers for locally finite graphs.}}, \bibinfo{journal}{J.
  Graph Theory} \bibinfo{volume}{50} (\bibinfo{year}{2005})
  \bibinfo{pages}{150--161}.
\bibitem[{Bruhn et~al.(2009)Bruhn, Kosuch and Myint}]{pre05523086}
\bibinfo{author}{H.~Bruhn}, \bibinfo{author}{S.~Kosuch}, \bibinfo{author}{M.W.
  Myint}, \bibinfo{title}{{Bicycles and left-right tours in locally finite
  graphs.}}, \bibinfo{journal}{Eur. J. Comb.} \bibinfo{volume}{30}
  (\bibinfo{year}{2009}) \bibinfo{pages}{356--371}.
\bibitem[{Bruhn and Stein(2006)}]{1088.05026}
\bibinfo{author}{H.~Bruhn}, \bibinfo{author}{M.~Stein},
  \bibinfo{title}{{MacLane's planarity criterion for locally finite graphs.}},
  \bibinfo{journal}{J. Comb. Theory, Ser. B} \bibinfo{volume}{96}
  (\bibinfo{year}{2006}) \bibinfo{pages}{225--239}.
\bibitem[{Bruhn and Stein(2007)}]{1136.05030}
\bibinfo{author}{H.~Bruhn}, \bibinfo{author}{M.~Stein}, \bibinfo{title}{{On end
  degrees and infinite cycles in locally finite graphs.}},
  \bibinfo{journal}{Combinatorica} \bibinfo{volume}{27} (\bibinfo{year}{2007})
  \bibinfo{pages}{269--291}.
\bibitem[{Bruhn and Yu(2008)}]{1180.05060}
\bibinfo{author}{H.~Bruhn}, \bibinfo{author}{X.~Yu}, \bibinfo{title}{{Hamilton
  cycles in planar locally finite graphs.}}, \bibinfo{journal}{SIAM J. Discrete
  Math.} \bibinfo{volume}{22} (\bibinfo{year}{2008})
  \bibinfo{pages}{1381--1392}.
\bibitem[{Catlin(1988)}]{MR928734}
\bibinfo{author}{P.A. Catlin}, \bibinfo{title}{A reduction method to find
  spanning {E}ulerian subgraphs}, \bibinfo{journal}{J. Graph Theory}
  \bibinfo{volume}{12} (\bibinfo{year}{1988}) \bibinfo{pages}{29--44}.
\bibitem[{Catlin(1992)}]{0771.05059}
\bibinfo{author}{P.A. Catlin}, \bibinfo{title}{{Supereulerian graphs: A
  survey.}}, \bibinfo{journal}{J. Graph Theory} \bibinfo{volume}{16}
  (\bibinfo{year}{1992}) \bibinfo{pages}{177--196}.
\bibitem[{Catlin et~al.(2009)Catlin, Lai and Shao}]{1168.05039}
\bibinfo{author}{P.A. Catlin}, \bibinfo{author}{H.J. Lai},
  \bibinfo{author}{Y.~Shao}, \bibinfo{title}{{Edge-connectivity and
  edge-disjoint spanning trees.}}, \bibinfo{journal}{Discrete Math.}
  \bibinfo{volume}{309} (\bibinfo{year}{2009}) \bibinfo{pages}{1033--1040}.
\bibitem[{Diestel(2004)}]{1187.05031}
\bibinfo{author}{R.~Diestel}, \bibinfo{title}{{On infinite cycles in graphs --
  or how to make graph homology interesting.}}, \bibinfo{journal}{Am. Math.
  Mon.} \bibinfo{volume}{111} (\bibinfo{year}{2004}) \bibinfo{pages}{559--571}.
\bibitem[{Diestel(2005)}]{1067.05037}
\bibinfo{author}{R.~Diestel}, \bibinfo{title}{{The cycle space of an infinite
  graph.}}, \bibinfo{journal}{Comb. Probab. Comput.} \bibinfo{volume}{14}
  (\bibinfo{year}{2005}) \bibinfo{pages}{59--79}.
\bibitem[{Diestel(2006)}]{1086.05001}
\bibinfo{author}{R.~Diestel}, \bibinfo{title}{{Graph theory. 3rd ed.}},
  \bibinfo{publisher}{{Graduate Texts in Mathematics 173. Berlin: Springer.
  xvi, 410p.}}, \bibinfo{year}{2006}.
\bibitem[{Diestel and K{\"u}hn(2004{\natexlab{a}})}]{1063.05076}
\bibinfo{author}{R.~Diestel}, \bibinfo{author}{D.~K{\"u}hn},
  \bibinfo{title}{{On infinite cycles. I, II.}},
  \bibinfo{journal}{Combinatorica} \bibinfo{volume}{24}
  (\bibinfo{year}{2004}{\natexlab{a}}) \bibinfo{pages}{69--89,91--116}.
\bibitem[{Diestel and K{\"u}hn(2004{\natexlab{b}})}]{1050.05071}
\bibinfo{author}{R.~Diestel}, \bibinfo{author}{D.~K{\"u}hn},
  \bibinfo{title}{{Topological paths, cycles and spanning trees in infinite
  graphs.}}, \bibinfo{journal}{Eur. J. Comb.} \bibinfo{volume}{25}
  (\bibinfo{year}{2004}{\natexlab{b}}) \bibinfo{pages}{835--862}.
\bibitem[{Georgakopoulos(2007)}]{agelos}
\bibinfo{author}{A.~Georgakopoulos}, \bibinfo{title}{Fleischner's theorem for
  infinite graphs}, \bibinfo{journal}{Oberwolfach Reports} \bibinfo{volume}{4}
  (\bibinfo{year}{2007}) \bibinfo{pages}{903--905}.
\bibitem[{Georgakopoulos(2009{\natexlab{a}})}]{pre05498547}
\bibinfo{author}{A.~Georgakopoulos}, \bibinfo{title}{{Infinite Hamilton cycles
  in squares of locally finite graphs.}}, \bibinfo{journal}{Adv. Math.}
  \bibinfo{volume}{220} (\bibinfo{year}{2009}{\natexlab{a}})
  \bibinfo{pages}{670--705}.
\bibitem[{Georgakopoulos(2009{\natexlab{b}})}]{pre05576421}
\bibinfo{author}{A.~Georgakopoulos}, \bibinfo{title}{{Topological circles and
  Euler tours in locally finite graphs.}}, \bibinfo{journal}{Electron. J.
  Comb.} \bibinfo{volume}{16} (\bibinfo{year}{2009}{\natexlab{b}}).
\bibitem[{Georgakopoulos(2012)}]{graph-continua}
\bibinfo{author}{A.~Georgakopoulos}, \bibinfo{title}{{Cycle decompositions:
  from graphs to continua.}}, \bibinfo{journal}{Adv. Math.}
  \bibinfo{volume}{229} (\bibinfo{year}{2012}) \bibinfo{pages}{935--967}.
\bibitem[{Georgakopoulos and Spr{\"u}ssel(2009)}]{pre05686847}
\bibinfo{author}{A.~Georgakopoulos}, \bibinfo{author}{P.~Spr{\"u}ssel},
  \bibinfo{title}{{Geodetic topological cycles in locally finite graphs.}},
  \bibinfo{journal}{Electron. J. Comb.} \bibinfo{volume}{16}
  (\bibinfo{year}{2009}).
\bibitem[{Harary and Nash-Williams(1965)}]{0136.44704}
\bibinfo{author}{F.~Harary}, \bibinfo{author}{C.~Nash-Williams},
  \bibinfo{title}{{On Eulerian and Hamiltonian graphs and line graphs.}},
  \bibinfo{journal}{Can. Math. Bull.} \bibinfo{volume}{8}
  (\bibinfo{year}{1965}) \bibinfo{pages}{701--709}.
\bibitem[{Mader(1978)}]{0389.05042}
\bibinfo{author}{W.~Mader}, \bibinfo{title}{{A reduction method for
  edge-connectivity in graphs.}}, \bibinfo{journal}{Ann. Discrete Math.}
  \bibinfo{volume}{3} (\bibinfo{year}{1978}) \bibinfo{pages}{145--164}.
\bibitem[{Menger(1927)}]{53.0561.01}
\bibinfo{author}{K.~Menger}, \bibinfo{title}{{Zur allgemeinen Kurventheorie.}},
  \bibinfo{journal}{Fundamenta} \bibinfo{volume}{10} (\bibinfo{year}{1927})
  \bibinfo{pages}{96--115}.
\bibitem[{Nash-Williams(1961)}]{0102.38805}
\bibinfo{author}{C.S.J.A. Nash-Williams}, \bibinfo{title}{{Edge-disjoint
  spanning trees of finite graphs.}}, \bibinfo{journal}{J. Lond. Math. Soc.}
  \bibinfo{volume}{36} (\bibinfo{year}{1961}) \bibinfo{pages}{445--450}.
\bibitem[{Stein(2006)}]{1085.05052}
\bibinfo{author}{M.K. Stein}, \bibinfo{title}{{Arboricity and tree-packing in
  locally finite graphs.}}, \bibinfo{journal}{J. Comb. Theory, Ser. B}
  \bibinfo{volume}{96} (\bibinfo{year}{2006}) \bibinfo{pages}{302--312}.
\bibitem[{Thomassen(1986)}]{MR856118}
\bibinfo{author}{C.~Thomassen}, \bibinfo{title}{Reflections on graph theory},
  \bibinfo{journal}{J. Graph Theory} \bibinfo{volume}{10}
  (\bibinfo{year}{1986}) \bibinfo{pages}{309--324}.
\bibitem[{Tutte(1961)}]{0096.38001}
\bibinfo{author}{W.~Tutte}, \bibinfo{title}{{On the problem of decomposing a
  graph into $n$ connected factors.}}, \bibinfo{journal}{J. Lond. Math. Soc.}
  \bibinfo{volume}{36} (\bibinfo{year}{1961}) \bibinfo{pages}{221--230}.
\bibitem[{Vella and Richter(2008)}]{Vella:2008fk}
\bibinfo{author}{A.~Vella}, \bibinfo{author}{R.~Richter},
  \bibinfo{title}{{Cycle spaces in topological spaces.}}, \bibinfo{journal}{J.
  Graph Theory} \bibinfo{volume}{59} (\bibinfo{year}{2008})
  \bibinfo{pages}{115--144}.

\end{thebibliography}

\end{document}